  \DeclareFontFamily{U}{txsymbols}{}
  \DeclareFontFamily{U}{txAMSb}{}
  \DeclareSymbolFont{txsymbols}{OMS}{txsy}{m}{n}
  \DeclareSymbolFont{txAMSb}{U}{txsyb}{m}{n}
  \DeclareMathSymbol{\aleph}{\mathord}{txsymbols}{64}
  \DeclareMathSymbol{\beth}{\mathord}{txAMSb}{105}
  \DeclareMathSymbol{\gimel}{\mathord}{txAMSb}{106}
  \DeclareMathSymbol{\daleth}{\mathord}{txAMSb}{107}
\newcommand{\floor}[1]{\lfloor #1 \rfloor}
\newcommand{\Lie}[1]{\mathrm{Lie}\left(#1\right)}
\newcommand{\supp}[1]{\mathrm{supp}\left(#1\right)}
\newcommand{\aA}{{{\scriptstyle{}^{\scriptstyle a}}\!\!\mathcal{A}}}
\DeclareMathOperator{\C}{\mathbb{C}}
\DeclareMathOperator{\R}{\mathbb{R}}
\DeclareMathOperator{\Z}{\mathbb{Z}}
\DeclareMathOperator{\N}{\mathbb{N}}
\DeclareMathOperator{\F}{\mathbb{F}}
\DeclareMathOperator{\T}{\mathbb{T}}
\DeclareMathOperator{\bL}{\mathbf{L}}
\DeclareMathOperator{\id}{id}
\DeclareMathOperator{\ad}{ad}
\DeclareMathOperator{\Ad}{Ad}
\DeclareMathOperator{\Aut}{Aut}
\DeclareMathOperator{\st}{\text{ s.t.\ }}
\DeclareMathOperator{\divides}{ \, \big|}
\DeclareMathOperator{\foreach}{\mathrm{ \ \forall }}
\DeclareMathOperator{\inv}{^{-1}}
\newtheorem{theorem}{Theorem}[section]
\newtheorem{proposition}[theorem]{Proposition}
\newtheorem{lemma}[theorem]{Lemma}
\newtheorem{remark}[theorem]{Remark}
\theoremstyle{definition}
\newtheorem{definition}{Definition}[section]
\title{Topological Properties of Almost Abelian Lie Groups}
\author[Z. Avetisyan ]{Zhirayr Avetisyan}
\author[O. Buran]{Oderico-Benjamin Buran}
\author[A. Paul]{Andrew Paul}
\author[L. Reed]{Lisa Reed}
\begin{document}

\begin{abstract}
An almost Abelian Lie group is a non-Abelian Lie group with a codimension 1 Abelian subgroup. We show that all discrete subgroups of complex simply connected almost Abelian groups are finitely generated. The topology of connected almost Abelian Lie groups is studied by expressing each connected almost Abelian Lie group as a quotient of its universal covering group by a discrete normal subgroup. We then prove that no complex connected almost Abelian group is compact, and give conditions for the compactness of connected subgroups of such groups. Towards studying the homotopy type of complex connected almost Abelian groups, we investigate the maximal compact subgroups of such groups.
\end{abstract}

\maketitle


\section{Introduction}

Almost Abelian Lie groups are prevalent in math and nature. Most Bianchi groups (those having Lie algebras Bi(II)-Bi(VII)), and therefore many cosmological models, are almost Abelian. Other applications include integrable systems, PDEs, and linear dynamical systems. Of particular import is the fact that the three dimensional Heisenberg group is almost Abelian. As the aforementioned Bianchi and Heisenberg groups indicate, almost Abelian groups include some of the most computationally friendly Lie groups. 

Another area in pure mathematics where almost Abelian Lie groups appear is in the study of solvmanifolds. A solvmanifold is a quotient $G/H$ of a simply connected solvable Lie group $G$ and a discrete subgroup $H$. Almost Abelian solvmanifolds have seen extensive study in recent years, and complex almost Abelian solvmanifolds have been of particular interest; see: \cite{Andriot_2011}, \cite{Andrada_2017}, \cite{freibert2011cocalibrated}, \cite{fino2022balanced}, \cite{Fino_2021}, and \cite{Stanfield_2021}.

General properties of almost Abelian Lie algebras over arbitrary fields were studied in \cite{Ave16} and \cite{Ave18}. Meanwhile, general properties of real almost Abelian Lie groups were studied in \cite{AAB20}. We now provide the complex analogue: we analyze various important structures of complex almost Abelian groups. Indeed, the observant reader will notice that some (but not all) of our proofs and results mirror those in \cite{AAB20}. We try to give explicit descriptions for as many important (topological and algebraic) structures as we can. 

The main result of this paper is Theorem \ref{AllDiscSubgrpsOfSimpConnGrpAreFinGen}, where we prove that every discrete subgroup of a complex connected almost Abelian Lie group is finitely generated.

Recall (as we will also go over in Section \ref{PreliminariesSection}) a \textit{multiplicity function} $\aleph$ completely determines an almost Abelian group via the Jordan matrix $J(\aleph)$ and its Lie algebra, which we denote by $\aA(\aleph)$ (also see \cite{Ave18}).

In Proposition \ref{SimpConnRep} we find that one faithful matrix representation for a simply connected almost Abelian Lie group $G$ with multiplicity function $\aleph$ is given by:
\begin{equation*}
    G \coloneqq \left\{ 
    \begin{pmatrix}
    1 & 0 & 0 \\
    v & e^{tJ(\aleph)} & 0 \\
    t & 0 & 1
    \end{pmatrix} 
    \: \middle| \ (v,t) \in \C^d \oplus \C \right\},
\end{equation*}
and in Lemma \ref{SimpConnExp} we calculate that the exponential map $\exp:\aA(\aleph) \longrightarrow G$ for a simply connected almost Abelian group $G$ with the above representation.

In Proposition \ref{CenterSimpConn} we give a complete description of the center of a simply connected almost Abelian group $G$:
\begin{align*}
    Z(G) = \{(u,s) \in \C^d \rtimes \C \divides u \in \ker(J(\aleph)),\ e^{s J(\aleph)} = \mathbbm{1} \}.
\end{align*}

In Proposition \ref{DiscSubgrpDim} we find that every discrete normal subgroup $N \subseteq G$ of a simply connected almost Abelian group $G$ with Lie algebra $\aA(\aleph)$ is free and finitely generated, and the rank $k$ of a discrete normal subgroup is bounded:
\begin{equation*}
k \leq \dim_{\R}\big(\ker(J(\aleph))\big) + 2.
\end{equation*}
As mentioned earlier, we further prove in Theorem \ref{AllDiscSubgrpsOfSimpConnGrpAreFinGen} that every (not necessarily normal) discrete subgroup of a simply connected almost Abelian group is also finitely generated.

In Proposition \ref{ConnSubGrpForms}, we give the explicit form of all connected Lie subgroups of a simply connected almost Abelian group $G$.

In Proposition \ref{NonCompactConnAAGrp} we find that there are no compact connected almost Abelian groups, and give a necessary and sufficient condition (Proposition \ref{CompCondition}) for a connected Lie subgroup of a connected almost Abelian group to be compact.

Lastly, in section \ref{SectionHomogeneousSpaces} we lay the groundwork for future investigations into homogeneous spaces by proving that the intersection of complex connected Lie subgroups of a simply connected almost Abelian group is again a complex connected Lie subgroup (Lemma \ref{intersectComplexSubgrps}), and find that the maximal compact subgroup of a connected almost Abelian group $G \coloneqq \widetilde{G} / \Gamma$ (where $\widetilde{G}$ is the universal cover and $\Gamma$ is a discrete subgroup) is exactly $\mathcal{C}(\Gamma) / \Gamma$, where $\mathcal{C}(\Gamma)$ is the minimal connected complex Lie subgroup of $\widetilde{G}$ containing $\Gamma$ (Proposition \ref{MaxCompSubgrp}).


\section{Preliminaries}\label{PreliminariesSection}

An almost Abelian Lie algebra is a Lie algebra with a codimension 1 Abelian subalgebra. For a finite-dimensional almost Abelian Lie algebra, this data can be fully captured by a formal device known as an $\N$-graded multiplicity function, and we now summarize from \cite{Ave18} this correspondence.

Let $\mathcal{C}$ be the class of cardinals, and let $\F$ be a field. An \textit{$\N$-graded multiplicity function} $\aleph$ is a map $\aleph : \F \times \N \to \mathcal{C}$. For our purposes, we take $\aleph: \C \times \N \to \mathcal{C}$. It is known (Prop. 1 in \cite{Ave16}) that an almost Abelian Lie algebra is necessarily of the form $\mathbf{V} \rtimes_{\ad_{e_0}} \C e_0$. A multiplicity function $\aleph$ completely and uniquely determines the structure of a complex almost Abelian Lie algebra by determining a Jordan matrix $J(\aleph)$ that serves as a matrix representation for $\ad_{e_0}$. We now give details to how $J(\aleph)$ is defined.

\begin{definition}
Define $\supp{\aleph} \coloneqq \{p \in \C[X] \divides \exists n \in \N \st \aleph(p,n) \neq 0\}$.
\end{definition}

Then, we define $J(p,n) = \lambda_p \mathbbm{1} + N_n$, where $\lambda_p$ is the complex number identified with the monic irreducible polynomial $p \in \C[x]$ that has it as a root, and where $N_n$ is the $n \times n$ matrix with 1's on the superdiagonal and zeroes everywhere else. Then 
\begin{equation*}
    J(\aleph) \coloneqq \bigoplus_{p \in \supp{\aleph}} \bigoplus_{n=1}^\infty \bigoplus_{\aleph(p,n)} J(p,n).
\end{equation*}

For the entirety of this paper, we only consider (finite-dimensional) complex almost Abelian Lie groups with a (finite-dimensional) complex almost Abelian Lie algebra, which then corresponds to a finitely-supported multiplicity function $\aleph$. We represent the Lie algebra uniquely determined by $\aleph$ as:
\begin{definition}
We define $ \aA(\aleph) \coloneqq \aA_{\C}(\aleph) \coloneqq \mathbf{V} \rtimes_{\ad_{e_0}} \C e_0$ where $\text{ad}_{e_0} = J(\aleph),\ \mathbf{V} = \C^{\dim_{\C}(\aleph)}$.
\end{definition}


Using $\aleph$, we can also define the following sets.

\begin{definition} \label{TAlephDef}
We define $T_\aleph \coloneqq \left\{z \in \C | \; e^{zJ(\aleph)} = \mathbbm{1} \right\}$.
\end{definition}

\begin{definition} \label{XAlephDef}
We define $\mathcal{X}_{\aleph}\coloneqq\left\{\omega\in\mathbb{C}\colon\supp{\aleph}\cong S\subseteq i \omega\mathbb{Z}\right\}$.
\end{definition}

\begin{lemma} \label{TalephTrivCond}
For a given finitely-supported multiplicity function $\aleph$, $T_{\aleph}\neq\{0\}$ if and only if one of the following two conditions holds.
\begin{enumerate}
\item $\aleph(p,n)=0$ for all $p\in\supp{\aleph}$ and $n>1$, and $\mathcal{X}_{\aleph}\neq\varnothing$. In this case,
\[T_{\aleph}=z_0\mathbb{Z},\]
where $z_0=2\pi/\omega_0$ and $\omega_0$ is an element of $\mathcal{X}_{\aleph}$ such that $|\omega_0|=\max{\left\{|\omega|\colon \omega\in\mathcal{X}_{\aleph}\right\}}$.
\item $\supp{\aleph}=\{p_0\}$ and $x_{p_0}=0$. In this case $T_{\aleph}=\mathbb{C}$.
\end{enumerate}
\begin{proof}
The second case follows immediately from the definitions of $T_{\aleph}$ and the matrix exponential. We prove that the first case is the only remaining case.

Note that it is clear that $0\in T_{\aleph}$. We can decompose the exponential $e^{zJ(\aleph)}$ as a direct sum:
\[e^{zJ(\aleph)}=\bigoplus_{p\in\supp{\aleph}}{\bigoplus_{n=1}^{\infty}{\bigoplus_{\aleph(p,n)}{e^{zJ(p,n)}}}}.\]
Recall that $J(p,n)$ is the $n\times n$ matrix $x_p \mathbbm{1} + N_n$, where $x_p$ is a root of the polynomial $p$ and $N_n$ is nilpotent with ones above the main diagonal and zeros elsewhere. Since the commutator of $x_p \mathbbm{1}$ and $N_n$ vanishes, we have
\begin{align} \label{ExpJordExpansion}
e^{zJ(p,n)}&=e^{z(x_p\mathbbm{1} + N_n)}\notag\\
&=e^{zx_p}e^{zN_n}\notag\\
&=e^{zx_p}\left(\mathbbm{1} + zN_n + \frac{z^2}{2!} N_n^2 + \dots + \frac{z^{n-1}}{(n-1)!}N_{n}^{n-1}\right).
\end{align}
Note that $e^{zJ(\aleph)}$ is the identity if and only if the exponential of each Jordan block $e^{zJ(p,n)}$ are themselves the identity. By the expansion \eqref{ExpJordExpansion}, we can see that the exponentials of the Jordan blocks are the identity precisely when the higher order terms vanish and $e^{zx_p} = 1$. First, we study when the higher order terms vanish.

When $n = 1$, we have that $N_1$ is the $1 \times 1$ matrix $[0]$, so the higher order terms vanish irrespective of our choice of $z$. Suppose that $n>1$ and there exists $z$ such that the higher order terms vanish:
\begin{equation} \label{HighOrdTermVanish}
zN_n+\frac{z^2}{2!}N_n^2+\dots+\frac{z^{n-1}}{(n-1)!}N_n^{n-1}=[0]_n.\end{equation}
Since $n>1$, the second column of $N_n$ consists of $1$ in the first component and zeros elsewhere. It follows that for the entry in the first row, second column of both sides of \eqref{HighOrdTermVanish} to match, we must have $z\cdot1=z=0$. Hence, nontrivial solutions to $e^{zJ(\aleph)} = \mathbbm{1}$ can exist only if higher order terms vanish independently of $z$, which can only occur if $\aleph$ vanishes for $n>1$ so that the only nilpotent matrix we deal with is $N_1$.

Restricting ourselves to $\aleph$ that vanishes for $n>1$ and equating \eqref{ExpJordExpansion} with $\mathbbm{1}$ gives us
\begin{equation*}
    e^{zx_p} \mathbbm{1} = \mathbbm{1}.
\end{equation*}
So we must have $e^{zx_p} = 1$. In particular, we must have this equation hold \textit{for all} $p\in\supp{\aleph}$ so that all of the Jordan blocks are the identity. Symbolically, we have established that $T_{\alpha}\neq\{0\}$ if and only if
\[\forall n>1,\aleph(p,n)=0\text{ and }\exists z\neq0\text{ s.t. }\forall p\in\supp{\aleph},\ e^{zx_p}=1.\]
Now we show that
\[\exists z\neq0\text{ s.t. }\forall p\in\supp{\aleph},e^{zx_p}=1\Longleftrightarrow\mathcal{X}_{\aleph}\neq\emptyset.\]
In one direction, suppose that for all $p\in\supp{\aleph}$, $e^{zx_p}=1$, and $z\neq0$ is arbitrary. This implies that for any $p\in\supp{\aleph}$, we can find an integer $N_p$ such that $zx_p=2\pi iN_p$. Hence $\frac{2\pi}{z}\in\mathcal{X}_{\aleph}$ and so $\mathcal{X}_{\aleph}$ is nonempty.

In the other direction, suppose $\mathcal{X}_{\aleph}$ is nonempty, with $\omega\in\mathcal{X}_{\aleph}$. Observe that $\omega=0$ would imply that $\supp{\aleph}=\{0\}$. Since we restrict ourselves to $\aleph$ that vanishes for $n>1$, we would have that $J(\aleph)$ is the zero matrix, which is impossible since our Lie algebra is almost Abelian. Thus $\omega\neq0$ and we can set $z=\frac{2\pi}{\omega}$. By definition, for every $p\in\supp{\aleph}$ there exists integers $N_p$ such that
\[x_p=\frac{2\pi i  N_p}{z}\Longrightarrow zx_p=2\pi i  N_p\Longrightarrow e^{zx_p}=1,\]
which completes the last direction.

Observe that the map $f\colon z\mapsto e^{zJ(\aleph)}$ is a Lie group homomorphism and $T_{\aleph}$ is precisely the kernel of this homomorphism. Since $\{\mathbbm{1}\}$ is discrete and $f$ is continuous, we must have that $T_{\aleph}=f^{-1}(\{\mathbbm{1}\})$ is a discrete subgroup of $\mathbb{C}$. So $T_{\aleph}$ is a lattice of the form
\[T_{\aleph}=z_0\mathbb{Z}\oplus w_0\mathbb{Z},\]
where $z_0$ and $w_0$ are $\mathbb{R}$-linearly independent as long as both are nonzero. Since $z_0=w_0=0$ yields the degenerate case $T_{\aleph}=\{0\}$, if $T_{\aleph}\neq\{0\}$, at least one of $z_0$ and $w_0$ is nonzero, so $T_{\aleph}\neq\{0\}$ is isomorphic to either $\mathbb{Z}$ or $\mathbb{Z}^2$.

Suppose $p\in\supp{\aleph}$ and $z\in T_{\aleph}$ is nonzero (so $T_{\aleph}$ is nontrivial). We have $e^{zx_p}=1$. So there exists a nonzero integer $N$ such that $zx_p=2\pi iN$. Now pick $w\in T_{\aleph}$ nonzero. Since $e^{wx_p}=1$, there exists an integer $M$ such that $w=\frac{2\pi i  M}{x_p}$. It follows that $w=\frac{M}{N}z$. Therefore, if $T_{\aleph}$ is nontrivial, all of its elements are colinear in the complex plane. In particular, $T_{\aleph}\ncong\mathbb{Z}^2$. Nontrivial $T_{\aleph}$ thus take the form
\[T_{\aleph}=z_0\mathbb{Z},\]
where $z_0$ is a complex number in $T_{\aleph}$ that has the smallest positive magnitude. Since $|z_0|$ is minimal amongst the nonzero elements of $T_{\aleph}$, $\frac{2\pi}{|z_0|}$ is maximal amongst elements of $\mathcal{X}_{\aleph}$. Hence,
\[z_0=\frac{2\pi}{\omega_0},\quad \omega_0\in\mathcal{X}_{\aleph}\text{ such that }|\omega_0|=\max{\left\{|\omega|\colon \omega\in\mathcal{X}_{\aleph}\right\}},\]
and we are done.
\end{proof}
\end{lemma}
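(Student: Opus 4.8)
The plan is to use the block-diagonal form of $J(\aleph)$ to reduce the equation $e^{zJ(\aleph)}=\mathbbm{1}$ to a condition on each Jordan block $J(p,n)$, and then, inside each block, to separate the scalar part $x_p\mathbbm{1}$ from the nilpotent part $N_n$. Writing $e^{zJ(\aleph)}=\bigoplus_{p}\bigoplus_{n}\bigoplus_{\aleph(p,n)}e^{zJ(p,n)}$, the condition $z\in T_{\aleph}$ is equivalent to $e^{zJ(p,n)}=\mathbbm{1}_n$ for every pair $(p,n)$ with $\aleph(p,n)\neq 0$. Since $x_p\mathbbm{1}_n$ commutes with $N_n$, one has $e^{zJ(p,n)}=e^{zx_p}\bigl(\mathbbm{1}_n+zN_n+\tfrac{z^2}{2!}N_n^2+\cdots+\tfrac{z^{n-1}}{(n-1)!}N_n^{n-1}\bigr)$, where the bracketed matrix is upper triangular with $1$'s on the diagonal and $e^{zx_p}$ is a nonzero scalar; comparing $(1,1)$-entries forces $e^{zx_p}=1$, and then the bracket itself must equal $\mathbbm{1}_n$.

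The first substantive step is the nilpotent obstruction: for $n=1$ the bracket is $\mathbbm{1}_1$ regardless of $z$, but for $n\geq 2$ the $(1,2)$-entry of the bracket equals $z$, so the bracket is $\mathbbm{1}_n$ only when $z=0$. Hence if $\aleph(p,n)\neq 0$ for some $n>1$ then $T_{\aleph}=\{0\}$; equivalently, $T_{\aleph}\neq\{0\}$ forces $\aleph$ to vanish for all $n>1$. The degenerate configuration $\supp{\aleph}=\{p_0\}$ with $x_{p_0}=0$ (all eigenvalues zero) is immediate from the definitions and accounts for the second alternative; setting it aside, I may henceforth assume $\aleph$ vanishes for $n>1$, so that $J(\aleph)$ is diagonalizable with eigenvalues $x_p$ (at least one of them nonzero) and $T_{\aleph}=\{z\in\mathbb{C}\colon e^{zx_p}=1\text{ for all }p\in\supp{\aleph}\}$.

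It remains to prove $T_{\aleph}\neq\{0\}\iff\mathcal{X}_{\aleph}\neq\varnothing$ and to identify the generator. The unifying device is the correspondence $z\leftrightarrow\omega=2\pi/z$ on nonzero complex numbers: the conditions $e^{zx_p}=1$ say exactly that each $x_p$ lies in $\tfrac{2\pi i}{z}\mathbb{Z}=i\omega\mathbb{Z}$, i.e.\ that $\supp{\aleph}$ (identified with its set of roots) embeds into $i\omega\mathbb{Z}$, i.e.\ that $\omega\in\mathcal{X}_{\aleph}$; conversely, any nonzero $\omega\in\mathcal{X}_{\aleph}$ yields a nonzero $z=2\pi/\omega\in T_{\aleph}$. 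This gives the equivalence. For the structure of $T_{\aleph}$: it is the kernel of the continuous homomorphism $z\mapsto e^{zJ(\aleph)}$, whose differential $z\mapsto zJ(\aleph)$ is injective since $J(\aleph)\neq 0$, so $T_{\aleph}$ is a discrete (indeed $0$-dimensional) subgroup of $\mathbb{C}\cong\mathbb{R}^2$. Picking $p$ with $x_p\neq 0$, any two nonzero $z,w\in T_{\aleph}$ satisfy $zx_p,wx_p\in 2\pi i\mathbb{Z}\setminus\{0\}$, whence $w/z\in\mathbb{Q}$ and $T_{\aleph}$ is collinear; a nontrivial collinear discrete subgroup is infinite cyclic, so $T_{\aleph}=z_0\mathbb{Z}$ with $z_0$ of least positive modulus. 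Finally $z\leftrightarrow 2\pi/z$ carries $T_{\aleph}\setminus\{0\}=z_0\mathbb{Z}\setminus\{0\}$ onto $\mathcal{X}_{\aleph}\setminus\{0\}$, so minimality of $|z_0|$ corresponds to maximality of $|\omega_0|$, giving $z_0=2\pi/\omega_0$ with $|\omega_0|=\max\{|\omega|\colon\omega\in\mathcal{X}_{\aleph}\}$.

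The step likely to need the most care is the passage, in the last paragraph, from $T_{\aleph}$ being a discrete subgroup of $\mathbb{R}^2$ to $T_{\aleph}$ being infinite cyclic with the claimed generator: one must exclude both rank $2$ and a dense rank-$1$ subgroup, and it is precisely here that a nonzero eigenvalue $x_p$ is indispensable (to force rationality of the ratio $w/z$) --- which is why the all-eigenvalues-zero case has to be separated out as the second alternative. The rest is bookkeeping with the matrix exponential and with the dictionary $z\leftrightarrow 2\pi/z$ between $T_{\aleph}$ and $\mathcal{X}_{\aleph}$.
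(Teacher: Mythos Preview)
Your proof is correct and follows essentially the same route as the paper: block-diagonal reduction to individual Jordan blocks, the factorization $e^{zJ(p,n)}=e^{zx_p}e^{zN_n}$, the $(1,2)$-entry obstruction forcing $z=0$ when $n>1$, the dictionary $z\leftrightarrow 2\pi/z$ between $T_\aleph\setminus\{0\}$ and $\mathcal{X}_\aleph\setminus\{0\}$, and the collinearity argument via a nonzero eigenvalue to pin down $T_\aleph=z_0\mathbb{Z}$. Your justification of discreteness (injectivity of the differential $z\mapsto zJ(\aleph)$) is in fact cleaner than the paper's, which argues that the preimage of a discrete set under a continuous map is discrete---a principle that is not valid in general, though the conclusion here is correct.
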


The upshot of Lemma \ref{TalephTrivCond} is that in the interesting cases, $\rank{T_{\aleph}}\leq 1$. This condition comes into play in the proof of Prop \ref{CompCondition}, which characterizes when a connected Lie subgroup of an almost Abelian Lie group is compact.


\section{Group Representations and Corresponding Exponential Maps}
The core results of this paper depend on some convenient matrix representations of almost Abelian Lie groups. Given an almost Abelian Lie algebra $\aA(\aleph)$ of dimension $d+1$, we recall from Prop. 2 in \cite{Ave16} that we have the matrix representation
\begin{equation} \label{ConnAAAlgRep}
    \aA(\aleph) = \left\{ \begin{pmatrix}
        0 & 0 \\
        v & tJ(\aleph)
    \end{pmatrix}       
     : (v,t) \in \C^d \oplus \C \right\}.
\end{equation}
Looking at the exponential of this matrix representation, we can conjecture a matrix representation (that of Prop. \ref{FinConnAAGroupProp}) for a \textit{connected} almost Abelian Lie group. However, as Proposition \ref{FinConnAAGroupProp} will show, this representation unfortunately is often not \textit{simply connected}. So, we will use this representation and the calculation of the corresponding matrix exponential (Lemma \ref{lem1}) as intuition for the simply connected representation (Prop. \ref{SimpConnRep}) and corresponding matrix exponential (Lemma \ref{SimpConnExp}) which we need.

\begin{proposition} \label{FinConnAAGroupProp}
For a finitely-supported multiplicity function $\aleph$, let
\begin{equation*}
    G \coloneqq \left\{ \begin{pmatrix}
        1 & 0 \\
        v & e^{t J(\aleph)}
    \end{pmatrix} \middle| \; (v,t) \in \C^d \oplus \C \right\}.
\end{equation*}
Then $G$ is a connected Lie group with Lie algebra $\aA(\aleph)$, and it is simply connected if and only if $T_\aleph = \{0\}$.
\begin{proof}
That $G$ is a connected Lie group because every element is path connected to the identity is clear from the definition. Then for all $(u,s) \in \C^d \oplus \C$, let $\gamma_{(u,s)}: (-1,1) \to G$ be a smooth curve defined by
\begin{equation*}
    \gamma_{(u,s)}(\tau) \coloneqq \begin{pmatrix}
        1 & 0 \\
        v(\tau) & e^{t(\tau) J(\aleph)}
    \end{pmatrix},
\end{equation*}
with 
\begin{equation*}
    (v(0), t(0)) = (0,0), \qquad (v'(0), t'(0)) = (u,s).
\end{equation*}
Then since we can split the derivative of $e^{t(\tau) J(\aleph)}$ into its real and complex parts, we may calculate
\begin{equation*}
    \dv{}{\tau} \begin{pmatrix}
        1 & 0 \\
        v(\tau) & e^{t(\tau) J(\aleph)}
    \end{pmatrix} \biggr|_{\tau=0} = 
    \begin{pmatrix}
        0 & 0 \\
        u & sJ(\aleph)
    \end{pmatrix}  \in \aA(\aleph),
\end{equation*}
where the last inclusion is follows from Prop. 3 of \cite{Ave16}. Thus $\aA(\aleph)$ is the Lie algebra of $G$.

Consider the map $\varphi: \C^d \times \C \to G$, defined by:
\[ (v,t)\mapsto\begin{pmatrix}
        1 & 0 \\
        v & e^{t J(\aleph)}
    \end{pmatrix}.\]

Let $\pi\colon\mathbb{C}^d\times\mathbb{C}\to\mathbb{C}^d\times\left(\C/T_\aleph\right)$ be the natural quotient map. In particular, we define an equivalence relation $\sim$ on $\mathbb{C}$ where $t\sim t'$ if and only if $t-t'\in T_{\aleph}$. Then $\pi$ maps $(v,t)\mapsto(v,[t])$ where $[t]$ is the equivalence class of $t$ under this relation.

Suppose that $t \sim t'$. Then, $e^{tJ(\aleph)}=e^{t'J(\aleph)}$ so that $\varphi(v,t)=\varphi(v,t')$. Now, we may define the map $\psi \colon \mathbb{C}^d \times \left( \C/T_\aleph \right)\to G$ that maps
\[\left(v,[t]\right)\mapsto\begin{pmatrix}
1 & 0\\
v & e^{tJ(\aleph)}
\end{pmatrix}.\]
$\psi$ is smooth with a smooth inverse. So $G$ is diffeomorphic to $\mathbb{C}^d\times\left(\C/T_\aleph\right)$.

Suppose $T_{\aleph}$ is trivial. Then $G$ is diffeomorphic to $\mathbb{C}^{d+1}$, which is simply connected.

On the other hand, suppose that $G$ is simply connected. By Lemma \ref{TalephTrivCond}, we have that either $T_{\aleph}$ is trivial or $T_{\aleph}\cong\mathbb{Z}$.

If $T_{\aleph}\cong\mathbb{Z}$, we have 
\[
\C/T_\aleph\cong \C/\Z \cong \R \times (\R/\Z)
\]
But the map $t\mapsto e^{2\pi i  t}$ is a homomorphism from $\mathbb{R}$ to $S^1$ and $\mathbb{Z}$ is the kernel of the homomorphism, hence $\R/\Z\cong S^1$ and so $G$ is diffeomorphic to $\mathbb{R}^{2d+1}\times S^1$. In particular, the fundamental group of $G$ is $\pi_1(G)\cong\pi_1(S^1)\cong\mathbb{Z}$, so $G$ is not simply connected, a contradiction. Therefore, $T_{\aleph}\ncong\mathbb{Z}$.

We conclude that $G$ is simply connected if and only if $T_{\aleph}$ is trivial.
\end{proof}
\end{proposition}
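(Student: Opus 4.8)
The plan is to treat the two claims in turn: that $G$ is a connected Lie group with Lie algebra $\aA(\aleph)$, and the simple-connectivity dichotomy.

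For the first claim the starting point is the block computation
\[
\begin{pmatrix} 1 & 0 \\ v & e^{tJ(\aleph)} \end{pmatrix}\begin{pmatrix} 1 & 0 \\ w & e^{sJ(\aleph)} \end{pmatrix} = \begin{pmatrix} 1 & 0 \\ v + e^{tJ(\aleph)}w & e^{(s+t)J(\aleph)} \end{pmatrix},
\]
which shows $G$ is closed under multiplication, contains the identity (take $(v,t)=(0,0)$), and contains inverses (that of $(v,t)$ being $(-e^{-tJ(\aleph)}v,-t)$); hence $G$ is a subgroup of $GL_{d+1}(\C)$. Path-connectedness to the identity — hence connectedness — is immediate, since $\tau\mapsto\left(\begin{smallmatrix}1&0\\ \tau v & e^{\tau t J(\aleph)}\end{smallmatrix}\right)$ is a path in $G$ from $(0,0)$ to $(v,t)$. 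To put a Lie group structure on $G$ and identify its Lie algebra, I would observe that the surjection $\varphi\colon\C^d\times\C\to G$, $(v,t)\mapsto\left(\begin{smallmatrix}1&0\\ v & e^{tJ(\aleph)}\end{smallmatrix}\right)$, factors through a bijection $\psi\colon\C^d\times(\C/T_\aleph)\to G$ — well-definedness of $\psi$ is precisely the definition of $T_\aleph$ — transport the manifold structure along $\psi$, and note that in these coordinates the group law is the semidirect product for the $\C$-action $t\cdot v=e^{tJ(\aleph)}v$, which descends to $\C/T_\aleph$; so $G$ is a Lie group. Differentiating $\varphi$ at the identity, using that $t\mapsto e^{tJ(\aleph)}$ is entire with derivative $J(\aleph)$ at $0$, identifies the tangent space at the identity with $\left\{\left(\begin{smallmatrix}0&0\\ u & sJ(\aleph)\end{smallmatrix}\right):(u,s)\in\C^d\oplus\C\right\}$, which is the matrix model \eqref{ConnAAAlgRep} of $\aA(\aleph)$; that the commutator bracket of these matrices reproduces the bracket of $\aA(\aleph)$ is a short $2\times 2$-block computation (equivalently, Prop.~3 of \cite{Ave16}).

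For the second claim, since $\pi_1$ of a product is the product of fundamental groups and $\C^d$ is contractible, $G$ is simply connected iff $\C/T_\aleph$ is, where $T_\aleph$ is a subgroup of $(\C,+)\cong\R^2$. By Lemma \ref{TalephTrivCond}, $T_\aleph$ is either $\{0\}$, or of the form $z_0\Z$, or all of $\C$; the last case would force $J(\aleph)=0$, contradicting that $\aA(\aleph)$ is almost Abelian, so it is excluded. If $T_\aleph=\{0\}$ then $G$ is diffeomorphic to $\C^{d+1}\cong\R^{2d+2}$, hence simply connected. If $T_\aleph=z_0\Z$, choosing an $\R$-basis $\{z_0,w\}$ of $\C$ identifies $\C/T_\aleph$ with $\R\times(\R/\Z)\cong\R\times S^1$, so $G$ is diffeomorphic to $\R^{2d+1}\times S^1$ and $\pi_1(G)\cong\Z\neq 0$. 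Combining the two cases gives the stated equivalence.

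The matrix arithmetic and the fundamental-group bookkeeping are routine; the one place that needs genuine care is the Lie group claim, since $G$ with the subspace topology from $GL_{d+1}(\C)$ need not be an embedded submanifold (the one-parameter subgroup $\{e^{tJ(\aleph)}\}$ can wind densely, for instance when $J(\aleph)$ has purely imaginary eigenvalues with irrational ratios), so one must be explicit that the Lie structure on $G$ is the one transported along $\psi$ rather than the subspace structure. The only external input beyond elementary facts is Lemma \ref{TalephTrivCond}, which is precisely what forces $T_\aleph$ to have rank at most $1$ and thereby makes the $\pi_1$ computation clean.
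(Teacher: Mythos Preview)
Your proof is correct and follows essentially the same route as the paper's: both factor the surjection $\varphi$ through the bijection $\psi\colon\C^d\times(\C/T_\aleph)\to G$, identify the Lie algebra by differentiating at the identity, and invoke Lemma~\ref{TalephTrivCond} to reduce the $\pi_1$ computation to the dichotomy $T_\aleph\in\{\{0\},\,z_0\Z\}$. Your explicit remark that the Lie structure must be the one transported along $\psi$ rather than the subspace structure from $GL_{d+1}(\C)$, and your exclusion of the $T_\aleph=\C$ case via $J(\aleph)\neq 0$, are valid points of care that the paper leaves implicit.
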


We now calculate matrix exponential on the almost Abelian Lie algebra representation of \eqref{ConnAAAlgRep} and see that it lands in the group representation of Prop. \ref{FinConnAAGroupProp}.


\begin{lemma} \label{lem1}
\textit{The matrix exponential map of the matrix Lie algebra $\aA(\aleph)$ represented as in Prop. \ref{FinConnAAGroupProp} is given by} 
\begin{equation*}
    \exp \begin{pmatrix}
    0 & 0 \\
    v & t J(\aleph)
    \end{pmatrix} =
    \begin{pmatrix}
        1 & 0 \\
        \frac{e^{tJ(\aleph)} - \mathbbm{1}}{t J(\aleph)} v & e^{t J(\aleph)}
    \end{pmatrix}.
\end{equation*}
    
\begin{proof}
First, we show that 
\begin{equation} \label{wtsLem1}
        \begin{pmatrix}
            0 & 0 \\
            v & tJ(\aleph)
        \end{pmatrix}^n 
        =
        \begin{pmatrix}
            0 & 0 \\
            [tJ(\aleph)]^{n-1} v & [tJ(\aleph)]^n
        \end{pmatrix},
\end{equation}
for all $n\in\mathbb{N}$ by inducting on $n$. For $n = 1$, we indeed have
\begin{equation} \label{basecaseLem1}
        \begin{pmatrix}
            0 & 0 \\
            v & tJ(\aleph)
        \end{pmatrix}^1 = 
        \begin{pmatrix}
            0 & 0 \\
            [tJ(\aleph)]^{0} v & [tJ(\aleph)]^1
        \end{pmatrix},
    \end{equation}
and thus \eqref{basecaseLem1} is our inductive base case. Assume \eqref{wtsLem1} is true for $n = k \in \N$. We show that \eqref{wtsLem1} holds for $k+1$:
\begin{equation*}
        \begin{pmatrix}
            0 & 0 \\
            v & tJ(\aleph)
        \end{pmatrix}^{k+1}
        = 
        \begin{pmatrix}
            0 & 0 \\
            [tJ(\aleph)]^{k-1} v & [tJ(\aleph)]^{k}
        \end{pmatrix}
        \begin{pmatrix}
            0 & 0 \\
            v & tJ(\aleph)
        \end{pmatrix}
        =
        \begin{pmatrix}
            0 & 0 \\
            [tJ(\aleph)]^{k} v & [tJ(\aleph)]^{k+1}
        \end{pmatrix}.
    \end{equation*}
Thus by induction \eqref{wtsLem1} holds for all $n \in \N$. 

Now, by the series expansion of the matrix exponential, we have:
\begin{align*}
        \exp\begin{pmatrix}
            0 & 0 \\
            v & tJ(\aleph)
        \end{pmatrix} &= \sum_{n=0}^\infty \frac{1}{n!} \begin{pmatrix}
            0 & 0 \\
            v & tJ(\aleph)
        \end{pmatrix}^n \\
        &= \mathbbm{1} + \sum_{n=1}^\infty \frac{1}{n!}  
        \begin{pmatrix}
            0 & 0 \\
            [tJ(\aleph)]^{n-1} v & [tJ(\aleph)]^n
        \end{pmatrix} \\
        &= \begin{pmatrix}
            1 & 0 \\
            \frac{e^{tJ(\aleph)} - \mathbbm{1}}{t J(\aleph)} v & e^{t J(\aleph)}
        \end{pmatrix},
    \end{align*}
where the last equality comes from the component-wise series expansions, and the term $\frac{e^{tJ(\aleph)} - \mathbbm{1}}{t J(\aleph)}$ denotes the series of the matrix exponential, subtracted by the identity matrix, and with one less power of the argument, $tJ(\aleph)$, in each summed term.
\end{proof}
\end{lemma}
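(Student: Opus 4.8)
The plan is to compute the exponential straight from its defining power series. Write $M = \begin{pmatrix} 0 & 0 \\ v & tJ(\aleph) \end{pmatrix}$, regarded in block form with blocks of sizes $1$ and $d$; the key point is that $M$ is block lower triangular with vanishing top-left block, so its powers are forced into a rigid shape. First I would prove by induction on $n \ge 1$ that
\[
M^{n} = \begin{pmatrix} 0 & 0 \\ [tJ(\aleph)]^{n-1} v & [tJ(\aleph)]^{n} \end{pmatrix},
\]
the base case $n=1$ being immediate (reading $[tJ(\aleph)]^{0} = \mathbbm{1}$), and the inductive step being the single block multiplication $M^{k+1} = M^{k} M$, in which the top row remains zero and the bottom-left block simply acquires one more left factor of $tJ(\aleph)$.

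Next I would substitute this into $\exp(M) = \sum_{n \ge 0} \tfrac{1}{n!} M^{n}$. Because the matrix exponential series converges absolutely, it may be summed block by block. The bottom-right block yields $\sum_{n \ge 0} \tfrac{1}{n!}[tJ(\aleph)]^{n} = e^{tJ(\aleph)}$ (the $n=0$ term coming from $\mathbbm{1}$), the bottom-left block yields $\big(\sum_{n \ge 1} \tfrac{1}{n!}[tJ(\aleph)]^{n-1}\big) v = \big(\sum_{m \ge 0} \tfrac{1}{(m+1)!}[tJ(\aleph)]^{m}\big) v$, and the top row stays $(1,\,0)$. In particular the answer visibly has the shape $\begin{pmatrix} 1 & 0 \\ * & e^{tJ(\aleph)} \end{pmatrix}$, so it lands in the group of Proposition \ref{FinConnAAGroupProp} as promised. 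Matching the three nonzero blocks against the claimed formula is then immediate, apart from making sense of the bottom-left entry.

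That last point is the only genuine subtlety, and it is really a matter of notation: the symbol $\frac{e^{tJ(\aleph)} - \mathbbm{1}}{tJ(\aleph)}$ cannot literally denote left multiplication by $(tJ(\aleph))^{-1}$, since $tJ(\aleph)$ is singular whenever $t = 0$ or whenever $0 \in \supp\aleph$ (so that $J(\aleph)$ has a nontrivial kernel). I would define it instead to be the entire power series $\sum_{m \ge 0} \tfrac{1}{(m+1)!}[tJ(\aleph)]^{m}$, which converges for every matrix argument and coincides with $(tJ(\aleph))^{-1}\big(e^{tJ(\aleph)} - \mathbbm{1}\big)$ exactly when $tJ(\aleph)$ is invertible --- precisely the reading indicated by the closing sentence of the statement. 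With this convention the bottom-left block computed above is literally $\frac{e^{tJ(\aleph)} - \mathbbm{1}}{tJ(\aleph)} v$, and the proof is finished; everything else is bookkeeping with block matrix products and absolutely convergent series.
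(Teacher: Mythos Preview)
Your proposal is correct and follows essentially the same approach as the paper: induction on $n$ to obtain the closed form for $M^n$, substitution into the exponential series, and identification of the bottom-left block with the power series abbreviated by $\frac{e^{tJ(\aleph)}-\mathbbm{1}}{tJ(\aleph)}$. Your additional remarks on absolute convergence and on the singularity of $tJ(\aleph)$ only make the argument more careful, not different.
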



We now find a representation for the simply connected almost Abelian Lie group corresponding to a given almost Abelian Lie algebra $\aA(\aleph)$.

\begin{proposition} \label{SimpConnRep}
For a finitely-supported multiplicity function $\aleph$, let
\begin{equation*}
    G \coloneqq \left\{ 
    \begin{pmatrix}
    1 & 0 & 0 \\
    v & e^{tJ(\aleph)} & 0 \\
    t & 0 & 1
    \end{pmatrix} 
    \: \middle| \ (v,t) \in \C^d \oplus \C \right\}.
\end{equation*}
Then $G$ is a complex simply connected Lie group with Lie algebra isomorphic to $\aA(\aleph)$.

\begin{proof} Note that Prop. 3 in \cite{Ave16} showed that a finite-dimensional almost Abelian Lie algebra $\aA(\aleph)$ corresponding to a finite dimensional multiplicity function $\aleph: \C \times \N \to \N$ has a faithful matrix representation:
\begin{equation} \label{OGLieAlgRep}
    \aA(\aleph) \cong \C^d \rtimes \C \ni (v,t) \mapsto \begin{pmatrix}
        0 & 0 \\
        v & tJ(\aleph)
    \end{pmatrix}.
\end{equation}
Moreover, note that the map $\Phi$ defined by
\begin{equation} \label{NewLieAlgRep}
    \aA(\aleph) \ni \begin{pmatrix}
        0 & 0 \\
        v & tJ(\aleph)
    \end{pmatrix} 
    \mapsto
    \begin{pmatrix}
    0 & 0 & 0 \\
    v & tJ(\aleph) & 0 \\
    t & 0 & 0
    \end{pmatrix}
\end{equation}
is a complex Lie algebra isomorphism, and so we have another faithful matrix representation of $\aA(\aleph)$. For completeness, we check that this is indeed a Lie algebra isomorphism. It is clear that the map is bijective, so it suffices to check that it preserves Lie brackets. We compute
\begin{align*}
    \Phi\left[ \begin{pmatrix}
        0 & 0 \\
        v & tJ(\aleph)
    \end{pmatrix}, 
    \begin{pmatrix}
        0 & 0 \\
        u & sJ(\aleph)
    \end{pmatrix} \right]
    &= \Phi \left( \begin{pmatrix}
        0 & 0 \\
        tJ(\aleph)u & ts(J(\aleph))^2
    \end{pmatrix} 
    - \begin{pmatrix}
        0 & 0 \\
        sJ(\aleph)v & ts(J(\aleph))^2
    \end{pmatrix} \right)
    \\
    &= \Phi \begin{pmatrix}
        0 & 0 \\
        tJ(\aleph)u - sJ(\aleph)v & 0
    \end{pmatrix}
    \\
    &= \begin{pmatrix}
        0 & 0 & 0 \\
        tJ(\aleph)u - sJ(\aleph)v & 0 & 0\\
        0 & 0 & 0
    \end{pmatrix} 
    \\
    &= \begin{pmatrix}
        0 & 0 & 0 \\
        tJ(\aleph)u & ts(J(\aleph))^2 & 0 \\
        0 & 0 & 0
    \end{pmatrix}
    -
    \begin{pmatrix}
        0 & 0 & 0 \\
        sJ(\aleph)v & ts(J(\aleph))^2 & 0 \\
        0 & 0 & 0
    \end{pmatrix}
    \\
    &= \left[\Phi\begin{pmatrix}
        0 & 0 \\
        v & tJ(\aleph)
    \end{pmatrix},
    \Phi\begin{pmatrix}
        0 & 0 \\
        u & sJ(\aleph)
    \end{pmatrix} \right].
\end{align*}
We define $\Phi\inv$ by
\begin{equation*}
    \Phi\inv\begin{pmatrix}
        0 & 0 & 0 \\
        v & tJ(\aleph) & 0 \\
        t & 0 & 0
    \end{pmatrix} 
    =
    \begin{pmatrix}
        0 & 0 \\
        v & tJ(\aleph)
    \end{pmatrix}.
\end{equation*}
Then we check:
\begin{align*}
    &\quad\Phi\inv \left[ 
    \begin{pmatrix}
        0 & 0 & 0 \\
        v & tJ(\aleph) & 0 \\
        t & 0 & 0
    \end{pmatrix},
    \begin{pmatrix}
        0 & 0 & 0 \\
        u & sJ(\aleph) & 0 \\
        s & 0 & 0
    \end{pmatrix}
    \right]\\
    &=
    \Phi\inv \left( \begin{pmatrix}
        0 & 0 & 0 \\
        v & tJ(\aleph) & 0 \\
        t & 0 & 0
    \end{pmatrix}
    \begin{pmatrix}
        0 & 0 & 0 \\
        u & sJ(\aleph) & 0 \\
        s & 0 & 0
    \end{pmatrix}
    -
    \begin{pmatrix}
        0 & 0 & 0 \\
        u & sJ(\aleph) & 0 \\
        s & 0 & 0
    \end{pmatrix}
    \begin{pmatrix}
        0 & 0 & 0 \\
        v & tJ(\aleph) & 0 \\
        t & 0 & 0
    \end{pmatrix}
    \right) \\
    &=
    \Phi\inv \left(\begin{pmatrix}
        0 & 0 & 0 \\
        tJ(\aleph)u & ts(J(\aleph))^2 & 0 \\
        0 & 0 & 0
    \end{pmatrix}
    -
    \begin{pmatrix}
        0 & 0 & 0 \\
        sJ(\aleph)v & ts(J(\aleph))^2 & 0 \\
        0 & 0 & 0
    \end{pmatrix} 
    \right) \\
    &= 
    \Phi\inv \left( \begin{pmatrix}
        0 & 0 & 0 \\
        tJ(\aleph)u - sJ(\aleph)v & 0 & 0 \\
        0 & 0 & 0
    \end{pmatrix}
    \right) \\
    &= 
    \begin{pmatrix}
        0 & 0 \\
        tJ(\aleph)u - sJ(\aleph)v & 0
    \end{pmatrix} \\
    &= \begin{pmatrix}
        0 & 0 \\
        tJ(\aleph)u & ts(J(\aleph))^2
    \end{pmatrix}
    - 
    \begin{pmatrix}
        0 & 0 \\
        sJ(\aleph)v & ts(J(\aleph))^2
    \end{pmatrix} \\
    &= [\Phi\inv(v,t), \Phi\inv(u,s)].
\end{align*}

That $G$ is a closed subset of $\mathrm{GL}_n(\C)$ is apparent from its definition. That it is closed under multiplication is verified in the course of the proof of Prop. \ref{CenterSimpConn} below. That every element has an inverse is seen by observing that:
\begin{equation*}
    \begin{pmatrix}
    1 & 0 & 0 \\
    v & e^{t J(\aleph)} & 0 \\
    t & 0 & 1
    \end{pmatrix}
    \begin{pmatrix}
    1 & 0 & 0 \\
    e^{-tJ(\aleph)}(-v) & e^{-t J(\aleph)} & 0 \\
    -t & 0 & 1
    \end{pmatrix}
    =
    \mathbbm{1}.
\end{equation*}
Thus $G$ is a complex Lie group as a closed subgroup of $\mathrm{GL}_n(\C)$. Consider the map $\varphi: \C^d \oplus \C \to G$ given by 
\begin{equation*}
    \varphi(v,t) \coloneqq 
    \begin{pmatrix}
    1 & 0 & 0 \\
    v & e^{t J(\aleph)} & 0 \\
    t & 0 & 1
    \end{pmatrix}.
\end{equation*}
This is certainly injective because $(v,t)$ is represented in the image. Furthermore, it is easily seen to be surjective by the definition of $G$. Since $\varphi$ is obviously a holomorphism, we have that $\varphi$ is a biholomorphism. Thus $G \cong_{\text{biholo.}} \C^d \oplus \C$, so $G$ is simply connected.

Now consider a path $\gamma_{(u,s)} : (-1,1) \to G$ defined by 
\begin{equation*}
    \gamma(\tau) \coloneqq \begin{pmatrix}
    1 & 0 & 0 \\
    v(\tau) & e^{t(\tau) J(\aleph)} & 0 \\
    t(\tau) & 0 & 1
    \end{pmatrix},
\end{equation*}
with $(v(0), t(0)) = (0,0)$ and $(v'(0), t'(0)) = (u,s)$. Then
\begin{equation*}
    \dv{}{\tau} \begin{pmatrix}
    1 & 0 & 0 \\
    v(\tau) & e^{t(\tau) J(\aleph)} & 0 \\
    t(\tau) & 0 & 1
    \end{pmatrix} \biggr|_{\tau=0}
    = 
    \begin{pmatrix}
    0 & 0 & 0 \\
    v & J(\aleph) & 0 \\
    t & 0 & 0
    \end{pmatrix} \in \aA(\aleph),
\end{equation*}
where the inclusion at the end follows from \eqref{NewLieAlgRep}. Thus $\text{Lie}(G) \cong \aA(\aleph)$ by the faithful representation of \eqref{NewLieAlgRep}.
\end{proof}
\end{proposition}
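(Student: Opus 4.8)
The plan is to realize the desired simply connected group as a \emph{closed} matrix subgroup of $\mathrm{GL}_{d+2}(\C)$ and then read off both its topology and its Lie algebra from a single explicit global holomorphic chart. On the algebraic side, starting from the $2\times 2$-block faithful representation of $\aA(\aleph)$ recalled from \cite{Ave16}, I would introduce the map $\Phi$ sending $\left(\begin{smallmatrix}0&0\\ v& tJ(\aleph)\end{smallmatrix}\right)$ to the $(d+2)\times(d+2)$ matrix obtained by appending a bottom row whose only nonzero entry is the scalar $t$ in the first column, and verify that $\Phi$ is an injective $\C$-linear map preserving the commutator bracket. The observation that makes this work — and really the whole point of the construction — is that the appended bottom row $(t,0,\dots,0)$ only ever multiplies against the vanishing top row of another algebra element, so it contributes nothing to matrix products and hence nothing to brackets; thus $\Image{\Phi}$ is again a faithful matrix representation of $\aA(\aleph)$.

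Next I would show that $G$ is a closed subgroup of $\mathrm{GL}_{d+2}(\C)$. It is the locus where the $(1,1)$ and $(d+2,d+2)$ entries are $1$, the first and last rows and columns vanish off the obvious positions, and the central $d\times d$ block equals $e^{tJ(\aleph)}$ with $t$ the $(d+2,1)$ entry — all closed conditions, the last because $t\mapsto e^{tJ(\aleph)}$ is continuous. Closure under multiplication is a direct block computation giving $\varphi(v,t)\,\varphi(u,s)=\varphi\!\left(v+e^{tJ(\aleph)}u,\ t+s\right)$, using $e^{tJ(\aleph)}e^{sJ(\aleph)}=e^{(t+s)J(\aleph)}$ since the exponents commute; inverses are exhibited by the explicit formula $(v,t)\mapsto(-e^{-tJ(\aleph)}v,-t)$. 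In particular $G$ is isomorphic as a group to the semidirect product $\C^d\rtimes\C$ with $\C$ acting through $e^{tJ(\aleph)}$. By Cartan's closed-subgroup theorem $G$ is a Lie group, and since all the defining data and both group operations are holomorphic, it is a complex Lie group.

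It then remains to handle the topology and the Lie algebra, both via the parametrization $\varphi:\C^d\oplus\C\to G$. This map is injective (both $v$ and $t$ are recovered from the $(2,1)$-block and the $(d+2,1)$ entry), surjective by definition of $G$, and holomorphic with holomorphic inverse, hence a biholomorphism; so $G$ is biholomorphic, and in particular homeomorphic, to $\C^{d+1}$, which is simply connected. Finally, differentiating $\varphi$ at the origin along a curve with $(v(0),t(0))=(0,0)$ and $(v'(0),t'(0))=(u,s)$ gives the tangent vector $\Phi$ evaluated on $(u,s)$; letting $(u,s)$ range over $\C^{d+1}$ shows $T_eG=\Image{\Phi}$, and since the bracket on $\mathrm{Lie}(G)$ is the commutator inherited from the ambient matrix algebra, the first paragraph yields $\mathrm{Lie}(G)\cong\aA(\aleph)$.

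The only mildly delicate point I anticipate is the bracket-preservation of $\Phi$: one must check carefully that the appended bottom row does not interfere with the block multiplication, so that the enlarged representation is a genuine Lie algebra homomorphism rather than merely a linear injection. Everything else is routine matrix bookkeeping — the one structural fact worth stating cleanly being the closedness of $G$ in $\mathrm{GL}_{d+2}(\C)$, since that is what lets Cartan's theorem promote the set $G$ to a complex Lie group in the first place.
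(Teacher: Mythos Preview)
Your proposal is correct and follows essentially the same route as the paper: introduce the enlarged representation $\Phi$, check it preserves brackets, verify $G$ is a closed subgroup of $\mathrm{GL}_{d+2}(\C)$ with explicit multiplication and inverses, use the global chart $\varphi$ to get simple connectedness, and differentiate through the identity to identify the Lie algebra. Your explanation of \emph{why} $\Phi$ preserves brackets (the appended row meets only the vanishing top row) is a nice conceptual gloss the paper replaces with a direct computation, and you compute the product law $\varphi(v,t)\varphi(u,s)=\varphi(v+e^{tJ(\aleph)}u,\,t+s)$ on the spot whereas the paper defers that to the proof of its center computation; but these are presentational differences, not substantive ones.
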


Having found a faithful matrix representation for simply connected almost Abelian Lie groups, it is convenient for us to find the exponential map corresponding to this faithful representation. In the complex case, different matrix representations of the Lie algebra may yield different identities between the geometric exponential maps and the matrix exponential.

\begin{proposition} \label{SimpConnExp}
For a complex simply connected almost Abelian group $G$ with Lie algebra $\aA(\aleph)$, the exponential map $\exp:\aA(\aleph) \to G$ is given by
$$\exp \begin{pmatrix}
0 & 0 & 0 \\
v & tJ(\aleph) & 0 \\
t & 0 & 0 \\
\end{pmatrix} = \begin{pmatrix}
1 & 0 &0 \\
\frac{e^{tJ(\aleph)} - \mathbbm{1}}{tJ(\aleph)}v & e^{tJ(\aleph)} & 0 \\
t & 0 & 1 
\end{pmatrix} \in G.$$
\begin{proof}
We first show that 
\begin{equation} \label{indhypSimpConnExp}
    \begin{pmatrix}
0 & 0 & 0 \\
v & tJ(\aleph) & 0 \\
t & 0 & 0 \\
\end{pmatrix}^n = \begin{pmatrix}
0 & 0 & 0 \\
[tJ(\aleph)]^{n-1}v & [tJ(\aleph)]^n & 0 \\
0 & 0 & 0 \\
\end{pmatrix},
\end{equation}
for all integers $n\geq2$. We proceed by inducting on $n$. When $n=2$, we have 
\begin{equation} \label{basecaseSimpConnExp}
    \begin{pmatrix}
0 & 0 & 0 \\
v & tJ(\aleph) & 0 \\
t & 0 & 0 \\
\end{pmatrix}^2 = \begin{pmatrix}
0 & 0 & 0 \\
v & tJ(\aleph) & 0 \\
t & 0 & 0 \\
\end{pmatrix}\begin{pmatrix}
0 & 0 & 0 \\
v & tJ(\aleph) & 0 \\
t & 0 & 0 \\
\end{pmatrix} = \begin{pmatrix}
0 & 0 & 0 \\
[tJ(\aleph)]^1v & [tJ(\aleph)]^2 & 0 \\
0 & 0 & 0 \\
\end{pmatrix},
\end{equation}
so the base case holds. Next, assume \eqref{basecaseSimpConnExp} is true when $n=k$ for some $k \in \N$. When $n=k+1$,
\begin{align*}
    \begin{pmatrix}
0 & 0 & 0 \\
v & tJ(\aleph) & 0 \\
t & 0 & 0 \\
\end{pmatrix}^{k+1} &= \begin{pmatrix}
0 & 0 & 0 \\
v & tJ(\aleph) & 0 \\
t & 0 & 0 \\
\end{pmatrix}^k \begin{pmatrix}
0 & 0 & 0 \\
v & tJ(\aleph) & 0 \\
t & 0 & 0 \\
\end{pmatrix} \\
&= \begin{pmatrix}
0 & 0 & 0 \\
[tJ(\aleph)]^{k-1}v & [tJ(\aleph)]^k & 0 \\
0 & 0 & 0 \\
\end{pmatrix} \begin{pmatrix}
0 & 0 & 0 \\
v & tJ(\aleph) & 0 \\
t & 0 & 0 \\
\end{pmatrix} \\
&= \begin{pmatrix}
0 & 0 & 0 \\
[tJ(\aleph)]^kv & [tJ(\aleph)]^{k+1} & 0 \\
0 & 0 & 0 \\
\end{pmatrix}.
\end{align*}
This completes the induction.

By the series expansion of the matrix exponential we have,
\begin{align*}
    \exp \begin{pmatrix}
0 & 0 & 0 \\
v & tJ(\aleph) & 0 \\
t & 0 & 0 \\
\end{pmatrix} 
&= \sum_{n=0}^\infty \frac{1}{n!} \begin{pmatrix}
0 & 0 & 0 \\
v & tJ(\aleph) & 0 \\
t & 0 & 0 \\
\end{pmatrix}^n \\
&= \mathbbm{1} + \begin{pmatrix}
0 & 0 & 0 \\
v & tJ(\aleph) & 0 \\
t & 0 & 0 \\
\end{pmatrix} + \sum_{n=2}^\infty \frac{1}{n!} \begin{pmatrix}
0 & 0 & 0 \\
[tJ(\aleph)]^{n-1}v & [tJ(\aleph)]^n & 0 \\
0 & 0 & 0 \\
\end{pmatrix} \\
&= \begin{pmatrix}
1 & 0 & 0 \\
\frac{e^{tJ(\aleph)} - \mathbbm{1}}{tJ(\aleph)}v & e^{tJ(\aleph)} & 0 \\
t & 0 & 1 \\
\end{pmatrix}.
\end{align*}
The last equality comes from the component-wise series expansions, and the term $\frac{e^{tJ(\aleph)} - \mathbbm{1}}{t J(\aleph)}$ denotes the series of the matrix exponential, subtracted by the identity matrix, and with one less power of the argument, $tJ(\aleph)$, in each summed term. Note that $\frac{e^{tJ(\aleph)} - \mathbbm{1}}{tJ(\aleph)}v \in \C^d$, so it follows that 
$$\begin{pmatrix}
1 & 0 & 0 \\
\frac{e^{tJ(\aleph)} - \mathbbm{1}}{tJ(\aleph)}v & e^{tJ(\aleph)} & 0 \\
t & 0 & 1 \\
\end{pmatrix} \in G.$$
\end{proof}
\end{proposition}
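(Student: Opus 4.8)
The plan is to imitate the proof of Lemma \ref{lem1}: compute the powers of the Lie algebra element by induction, then sum the matrix exponential series block by block. Write $X$ for the $3 \times 3$ matrix $\begin{pmatrix} 0 & 0 & 0 \\ v & tJ(\aleph) & 0 \\ t & 0 & 0 \end{pmatrix}$ being exponentiated. The first step is the observation that the lower-left scalar $t$ is annihilated after one multiplication, so that $X^n$ for every integer $n \geq 2$ has the clean form with lower-left block $[tJ(\aleph)]^{n-1}v$, central block $[tJ(\aleph)]^n$, and all other entries zero, whereas $X^1 = X$ still carries the entry $t$ in its lower-left corner. I would prove the formula for $X^n$, $n \geq 2$, by a short induction: the base case $n=2$ is a direct matrix multiplication, and for the inductive step one multiplies the assumed form of $X^k$ on the right by $X$, noting that the only nonzero products that arise are $[tJ(\aleph)]^{k-1}v \cdot tJ(\aleph) = [tJ(\aleph)]^k v$ and $[tJ(\aleph)]^k \cdot tJ(\aleph) = [tJ(\aleph)]^{k+1}$.

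With these powers in hand, the second step is to split $\exp(X) = \mathbbm{1} + X + \sum_{n \geq 2} \frac{1}{n!} X^n$ and read off each block. The lower-left $\C^d$ block becomes $v + \sum_{n \geq 2} \frac{1}{n!}[tJ(\aleph)]^{n-1}v = \big(\sum_{n \geq 1} \frac{1}{n!}[tJ(\aleph)]^{n-1}\big)v$, which is by definition the expression $\frac{e^{tJ(\aleph)} - \mathbbm{1}}{tJ(\aleph)}v$ — i.e. the power series obtained from $e^{tJ(\aleph)}$ by dropping the identity term and lowering every power of $tJ(\aleph)$ by one; this is an honest entire power series in $tJ(\aleph)$ and requires no invertibility of $J(\aleph)$. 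The central $d \times d$ block is $\mathbbm{1} + tJ(\aleph) + \sum_{n \geq 2}\frac{1}{n!}[tJ(\aleph)]^n = e^{tJ(\aleph)}$; the $(3,1)$ entry receives a contribution only from the $n=1$ term and so equals $t$; and the remaining diagonal entries are $1$. Assembling these blocks produces exactly the matrix claimed in the statement.

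Finally I would check that the output lies in $G$ in the sense of Proposition \ref{SimpConnRep}: it equals $\varphi(v', t)$ for $v' = \frac{e^{tJ(\aleph)} - \mathbbm{1}}{tJ(\aleph)}v$, and $v' \in \C^d$ since it is a convergent power series in the matrix $tJ(\aleph)$ applied to the vector $v \in \C^d$. There is no real obstacle here — the computation is routine — and the only point requiring care is the bookkeeping that separates the $n = 1$ term, which carries the lower-left scalar $t$, from the terms $n \geq 2$, which do not; this is precisely the feature that makes the three-dimensional representation of Proposition \ref{SimpConnRep} yield a different exponential identity than the two-dimensional one of Lemma \ref{lem1}.
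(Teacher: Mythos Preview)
Your proposal is correct and follows the paper's proof essentially line for line: induction on $X^n$ for $n\geq 2$ with base case $n=2$, then splitting the exponential series as $\mathbbm{1}+X+\sum_{n\geq 2}\frac{1}{n!}X^n$ and reading off blocks. One small slip: in the inductive step the nonzero contribution to the $(2,1)$ block comes from $[tJ(\aleph)]^k\cdot v$ (the $(2,2)$ block of $X^k$ hitting the $(2,1)$ block of $X$), not from ``$[tJ(\aleph)]^{k-1}v\cdot tJ(\aleph)$'', which is not even a well-formed product since $[tJ(\aleph)]^{k-1}v$ is a column vector; the stated outcome $[tJ(\aleph)]^k v$ is nonetheless correct.
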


Finally, we note that as a consequence of Lemma \ref{SimpConnExp}, the exponential map is particularly simple to understand on the Abelian subalgebra of an almost Abelian Lie algebra.

\begin{remark} \label{ExpIsId}
Let $G$ be the simply connected group that has Lie algebra $\aA(\aleph)$. It follows that on the Abelian Lie subalgebra $\ker(J(\aleph)) \oplus \C$ the exponential map $\exp: \aA(\aleph) \to G$ associated with $G$ is given by:
\begin{equation*}
    \exp(v,t) = [v,t], \qquad \forall (v,t) \in \ker(J(\aleph)) \oplus \C.
\end{equation*}
\begin{proof} If $v \in \ker(J(\aleph))$ then:
\begin{align*}
    \frac{e^{tJ(\aleph)} - \mathbbm{1}}{t J(\aleph)}v &= \left( \sum_{n=1}^\infty \frac{1}{n!} (t J(\aleph))^{n-1} \right) v \\
    &= v + \sum_{n=2}^\infty \frac{1}{n!} t^{n-1} \big( (J(\aleph))^{n-1} v \big) \\
    &= v.
\end{align*}
\end{proof}
\end{remark}


\section{The Center of a Complex Almost Abelian Group}

We recall the following standard fact from Lie group theory.

\begin{lemma} \label{ExpMapToCenter}
Let $\mathfrak{g}$ be an arbitrary Lie algebra, $G$ be a connected matrix Lie group that has Lie algebra $\mathfrak{g}$, and let $\exp_G: \mathfrak{g} \to G$ be the corresponding exponential map (specific to $G$). Then $\exp_G(Z(\mathfrak{g})) \subseteq Z(G)$.
\end{lemma}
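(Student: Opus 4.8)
The plan is to combine three standard facts: on a matrix Lie group the intrinsic exponential map $\exp_G$ coincides with the matrix exponential; a connected Lie group is generated by any neighbourhood of its identity, hence by the image of $\exp_G$; and conjugation on $G$ intertwines $\exp_G$ with the adjoint representation. Fix $X \in Z(\mathfrak{g})$; the goal is to show that $\exp_G(X)$ commutes with every $g \in G$.

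First I would reduce the problem to the generators of $G$. Since the bracket on a matrix Lie algebra is the commutator, $X \in Z(\mathfrak{g})$ means $XY = YX$ for every $Y \in \mathfrak{g}$, i.e. $X$ and $Y$ commute \emph{as matrices}; hence their matrix exponentials commute, $e^{X} e^{Y} = e^{X+Y} = e^{Y} e^{X}$, and because $\exp_G$ is the matrix exponential this says $\exp_G(X)$ commutes with $\exp_G(Y)$ for all $Y \in \mathfrak{g}$. (Equivalently, without invoking the matrix model: conjugation $c_g$ by $g\in G$ is a Lie group automorphism with differential $\Ad_g$, so $c_g\circ\exp_G = \exp_G\circ\Ad_g$; taking $g=\exp_G(Y)$ and using $\Ad_{\exp_G(Y)} = e^{\ad_Y}$ together with $\ad_Y X = -\ad_X Y = 0$ gives $\exp_G(Y)\,\exp_G(X)\,\exp_G(Y)^{-1} = \exp_G\!\big(e^{\ad_Y}X\big) = \exp_G(X)$.)

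Next I would pass from the generators to all of $G$. Since $d(\exp_G)_0 = \id$, the map $\exp_G$ is a local diffeomorphism at $0$, so its image contains an open neighbourhood $U$ of $1 \in G$; as $G$ is connected, $U$ generates $G$, and therefore every $g \in G$ is a finite product $\exp_G(Y_1)\cdots\exp_G(Y_k)$ with $Y_i \in \mathfrak{g}$. Since $\exp_G(X)$ commutes with each factor, it commutes with $g$, whence $\exp_G(X) \in Z(G)$, which is the assertion. I do not expect a genuine obstacle here; the only step deserving a word of care is the passage from the generating set $\exp_G(\mathfrak{g})$ to all of $G$, which is precisely where connectedness of $G$ enters.
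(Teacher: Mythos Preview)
Your argument is correct: the commutator identity $[X,Y]=0$ for all $Y$ gives $e^X e^Y = e^Y e^X$, and connectedness lets you pass from the generating set $\exp_G(\mathfrak{g})$ to all of $G$. The paper itself does not prove this lemma at all; it simply records it as a ``standard fact from Lie group theory'' and moves on, so there is nothing to compare against.
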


\begin{proposition} \label{CenterSimpConn}
Let $G$ be a simply connected almost Abelian Lie group with Lie algebra $\aA(\aleph)$. Recall Definition \ref{TAlephDef}. The center of $G$ is given by:
\begin{align*}
    Z(G) &= \exp_{G}[Z(\aA(\aleph))] \times T_\aleph \\
    &= \exp_{G}[Z(\aA(\aleph)) \times T_\aleph] \\
    &= \{(u,s) \in \C^d \rtimes \C \divides u \in \ker(J(\aleph)),\ e^{s J(\aleph)} = \mathbbm{1} \}
\end{align*}
where $\exp_{G}: \aA(\aleph) \to G$ is the associated exponential map with $G$.

Also, the preimage under the exponential map (associated with $G$) of the identity component of the center is:
\begin{equation*}
    \exp_{G}\inv[Z(G)_0] = Z(\aA(\aleph))
\end{equation*}

\begin{proof}
We represent the standard matrix exponential that is a matrix series as $e^A$ where $A$ is understood to be a matrix. By Prop. \ref{SimpConnRep} we may use the representation:
\begin{equation*}
    G \coloneqq \left\{ \begin{pmatrix}
        1 & 0 & 0 \\
        v & e^{t J(\aleph)} & 0 \\
        t & 0 & 1
    \end{pmatrix} \middle| \ (v,t) \in \C^d \oplus \C \right\}.
\end{equation*}
For simplicity, we represent an element of $G$ with this matrix representation by a bracket-tuple as follows:
\begin{equation*}
    [v,t] \coloneqq \begin{pmatrix}
        1 & 0 & 0 \\
        v & e^{t J(\aleph)} & 0 \\
        t & 0 & 1
    \end{pmatrix}. 
\end{equation*}

Thus we may compactly represent the multiplication of group elements by:
\begin{equation}\label{multiplicationOfAAGpElsEq}
    [v,t][u,s] = [v+e^{tJ(\aleph)}u,t+s].
\end{equation}

Now suppose $[u,s] \in Z(G)$, so $[v,t][u,s] = [u,s][v,t]$. Then by \eqref{multiplicationOfAAGpElsEq}:
\begin{equation*}
    [u, s][v, t] = [u + e^{sJ(\aleph)} v,s+t].
\end{equation*}
Thus the condition $[v,t][u,s] = [u,s][v,t]$ is equivalent to $v + e^{tJ(\aleph)}u = u + e^{sJ(\aleph)} v$. We can then rewrite this latter expression as
\begin{equation}\label{commutingCondition}
    (e^{sJ(\aleph)} - \mathbbm{1})v = (e^{tJ(\aleph)} - \mathbbm{1})u.
\end{equation}
Setting $v = 0$ we have that $(e^{tJ(\aleph)} - \mathbbm{1})u = 0$ and thus we must have $J(\aleph)u = 0$, that is, we have $u \in \ker(J(\aleph))$, as desired. 

Since we are supposing $[u,s] \in Z(G)$, equation \eqref{commutingCondition} must hold for all $v$. So if $u \in \ker(J(\aleph))$, then $(e^{sJ(\aleph)} - \mathbbm{1})v = 0$ for all $v$, which means that $e^{sJ(\aleph)} = \mathbbm{1}$. Thus we have proven 
\begin{equation*}
    Z(G) \subseteq  \{[u,s] \in \C^d \rtimes \C \divides u \in \ker(J(\aleph)),\ e^{s J(\aleph)} = \mathbbm{1} \}.
\end{equation*}

For notational convenience, define 
\begin{equation*}
    X \coloneqq \{(u,s) \in \C^d \oplus \C = \aA(\aleph) \divides u \in \ker(J(\aleph))  = Z(\aA(\aleph)),\ e^{s J(\aleph)} = \mathbbm{1} \},
\end{equation*}
where we recall (Remark 2 in \cite{Ave16}) that $\ker(J(\aleph))  = Z(\aA(\aleph))$.

Now suppose $(u,s) \in  X$. By the conditions of the set $X$ on the last component of an element belonging to it, $s \in T_\aleph$ by definition. Thus $X = Z(\aA(\aleph)) \times T_\aleph$. By Remark \ref{ExpIsId}, 
$\exp_{G} ((v,t)) = [v,t] \foreach (v,t) \in Z(\aA(\aleph)) \times T_{\aleph}$. Notice that $[v,t] = [v,0][0,t]$, thus
\begin{equation*}
    \exp_{G}\big( Z(\aA(\aleph)) \times T_\aleph \big) = \exp_{G}\big(Z(\aA(\aleph))\big) \times T_\aleph.
\end{equation*}

Now by Lemma \ref{ExpMapToCenter}, we have $\exp_{G}\big( Z(\aA(\aleph)) \big) \subseteq Z(G)$. Then we calculate: let $[v,t] \in G,\ [u,s] \in \exp_{G}\big( Z(\aA(\aleph)) \big) \times T_\aleph$. Then 
\begin{align*}
    [v,t][u,s] &= [v + e^{tJ}u, t+s] \\
    &= [v + \left( \sum_{n=0}^\infty \frac{1}{n!} (tJ)^n \right) u , t + s] \\
    &= [v + \left( \sum_{n=0}^\infty \frac{1}{n!} (tJ)^n u \right), t + s] \\
    &= [v + u, t + s] = [u + v, s + t] = [u,s] [v,t].
\end{align*}
Thus $\exp_{G}\big( Z(\aA(\aleph)) \big) \times T_\aleph \subseteq Z(G)$. 

By Remark \ref{ExpIsId}, we have that as sets 
\begin{equation*}
    \exp_{G}\big( Z(\aA(\aleph)) \big) = Z(\aA(\aleph)) = \ker(J(\aleph)).
\end{equation*}

Thus by bidirectional inclusion, 
\begin{equation*}
    Z(G) = \{[u,s] \in \C^d \rtimes \C \divides u \in \ker(J(\aleph)),\ e^{s J(\aleph)} = \mathbbm{1} \}.
\end{equation*}

Thus we have:
\begin{align*}
    Z(G) &= \{[u,s] \in \C^d \rtimes \C \divides u \in \ker(J(\aleph)),\ e^{s J(\aleph)} = \mathbbm{1} \} \\
    &= \exp_{G}\big( Z(\aA(\aleph)) \big) \times T_\aleph. \\
    &= \exp_{G}\big( Z(\aA(\aleph)) \times T_\aleph \big)
\end{align*}

Finally, if $\exp(v,t)=[u,s]\in Z(G)_0$, then since we have $Z(G)=\exp_{G}(\aA(\aleph))\times T_\aleph$, we have by connectedness that $Z(G)_0 = \exp_{G}(\aA(\aleph)) \times \{0\}$. Hence $v=u$ and $t=s=0$. This concludes the proof.
\end{proof}
\end{proposition}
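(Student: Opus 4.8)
The plan is to work entirely in the faithful matrix model of Proposition \ref{SimpConnRep}, writing a group element as $[v,t]$ and reading off from block multiplication the group law $[v,t][u,s] = [v + e^{tJ(\aleph)}u,\ t+s]$ (this computation also supplies the closure under multiplication promised in the statement of Proposition \ref{SimpConnRep}). The core of the proof is then a direct commutator computation: comparing $[v,t][u,s]$ with $[u,s][v,t]$, the element $[u,s]$ is central if and only if $(e^{sJ(\aleph)} - \mathbbm{1})v = (e^{tJ(\aleph)} - \mathbbm{1})u$ for all $(v,t) \in \C^d \oplus \C$. Setting $v = 0$ and letting $t$ vary forces $(e^{tJ(\aleph)} - \mathbbm{1})u$ to vanish identically, and the linear-in-$t$ term of that expression is $tJ(\aleph)u$, so $J(\aleph)u = 0$, i.e.\ $u \in \ker J(\aleph) = Z(\aA(\aleph))$ (the identification being Remark 2 of \cite{Ave16}). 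For such $u$ one has $e^{tJ(\aleph)}u = u$, so the centrality condition reduces to $(e^{sJ(\aleph)} - \mathbbm{1})v = 0$ for all $v$, i.e.\ $e^{sJ(\aleph)} = \mathbbm{1}$, which is precisely $s \in T_\aleph$ (Definition \ref{TAlephDef}); conversely, if $u \in \ker J(\aleph)$ and $s \in T_\aleph$ then both sides of the centrality condition vanish. This yields the third description, $Z(G) = \{(u,s) \in \C^d \rtimes \C \mid u \in \ker J(\aleph),\ e^{sJ(\aleph)} = \mathbbm{1}\}$.

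Next I would reconcile this with the two exponential-image descriptions, being careful about the identifications $\aA(\aleph) \cong \C^d \oplus \C$ and $\C^d \oplus \C \cong G$ via the map $\varphi$ of Proposition \ref{SimpConnRep}. As a subset of $\aA(\aleph)$, $Z(\aA(\aleph)) \times T_\aleph$ is exactly $\{(v,s) : v \in \ker J(\aleph),\ s \in T_\aleph\}$, which lies inside the Abelian subalgebra $\ker J(\aleph) \oplus \C$ on which $\exp_G$ acts as the identity (Remark \ref{ExpIsId}); hence $\exp_G(Z(\aA(\aleph)) \times T_\aleph) = \{[v,s] : v \in \ker J(\aleph),\ s \in T_\aleph\}$, which is the third description. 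For the first description I would factor $[v,s] = [v,0][0,s]$: the factor $[v,0]$ lies in $\exp_G(Z(\aA(\aleph)))$ and is central (by Lemma \ref{ExpMapToCenter}, or directly from the first step), so multiplying the subset $\exp_G(Z(\aA(\aleph)))$ of $G$ by the image of $T_\aleph$ again produces $\{[v,s] : v \in \ker J(\aleph),\ s \in T_\aleph\}$. Combined with the inclusion $\exp_G(Z(\aA(\aleph))) \times T_\aleph \subseteq Z(G)$ — the $\ker J(\aleph)$-part central by Lemma \ref{ExpMapToCenter}, the $T_\aleph$-part by the vanishing observation of the first step — all three sets coincide.

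For the last assertion I would first pin down $Z(G)_0$. Since $\varphi$ is a biholomorphism, $Z(G)$ is homeomorphic to $\ker J(\aleph) \times T_\aleph$ with the product topology; $\ker J(\aleph)$ is a complex linear subspace, hence connected, and $T_\aleph$ is discrete by Lemma \ref{TalephTrivCond} (the only other possibility there, $T_\aleph = \C$, cannot occur since it would force $J(\aleph) = 0$ and an Abelian Lie algebra), so $Z(G)_0 = \{[v,0] : v \in \ker J(\aleph)\} = \exp_G(Z(\aA(\aleph)))$. Then, using the explicit formula $\exp_G(v,t) = \big[\tfrac{e^{tJ(\aleph)}-\mathbbm{1}}{tJ(\aleph)}\,v,\ t\big]$ of Proposition \ref{SimpConnExp}, the condition $\exp_G(v,t) \in Z(G)_0$ forces $t = 0$, whence $\exp_G(v,0) = [v,0]$ and this lies in $Z(G)_0$ only when $v \in \ker J(\aleph)$; therefore $\exp_G^{-1}[Z(G)_0] = \ker J(\aleph) \oplus \{0\} = Z(\aA(\aleph))$. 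Every step is elementary matrix algebra; the only point that really needs attention is bookkeeping the three ambient sets so that ``$\times$'' and ``$\exp_G$'' are read consistently, and I do not anticipate a genuine obstacle beyond that.
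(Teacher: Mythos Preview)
Your proposal is correct and follows essentially the same route as the paper: both work in the matrix model of Proposition~\ref{SimpConnRep}, derive the same commutation condition $(e^{sJ(\aleph)}-\mathbbm{1})v=(e^{tJ(\aleph)}-\mathbbm{1})u$, specialize $v=0$ to force $u\in\ker J(\aleph)$ and then $s\in T_\aleph$, and identify the result with the exponential-image descriptions via Remark~\ref{ExpIsId}. Your treatment of the identity-component preimage is in fact a bit more careful than the paper's (you explicitly invoke the discreteness of $T_\aleph$ from Lemma~\ref{TalephTrivCond} and rule out the $T_\aleph=\C$ case), but the underlying argument is the same.
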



\section{Discrete Normal Subgroups}
We study discrete subgroups extensively because we will want to take quotients of simply connected complex almost Abelian groups in order to study connected complex almost Abelian groups.

\;

The following Lemma is adapted from Lemma 11.3 in \cite{Hall15}, which is stated and proven for the reals, but the same proof shows the result holds over $\C$ as well.
\begin{lemma} \label{DiscGrpVS}
Let $V$ be a finite-dimensional inner product space over $\C$, viewed as a group under vector addition, and let $\Gamma$ be a discrete subgroup of $V$. Then there exist $\R$-linearly independent vectors $v_1, \dots, v_k$ in $V$ such that $\Gamma$ is precisely the set of vectors of the form $\sum_{i=1}^k m_i v_i$ with each $m_i \in \Z$. 
\end{lemma}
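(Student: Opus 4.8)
The plan is to follow verbatim the strategy of Hall's Lemma~11.3 (the reference cited), simply reading it over $\R$: regard $V$ as a real inner product space of dimension $m=2\dim_{\C}V$, and argue by induction on $d \coloneqq \dim_{\R}\!\big(\mathrm{span}_{\R}(\Gamma)\big)$, the dimension of the \emph{real} span of $\Gamma$. (One works with the real span, not a complex one, because $\Gamma$ is only a $\Z$-module.) The base case $d=0$ forces $\Gamma=\{0\}$, and the empty list $k=0$ works.

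For the inductive step, set $W=\mathrm{span}_{\R}(\Gamma)$. First I would extract a shortest nonzero lattice vector: since $\Gamma$ is a discrete subgroup it is closed in $V$ and meets every bounded set in a finite set, so among the nonzero elements of $\Gamma$ there is one, call it $v_1$, of minimal norm. Then I would check that $\Gamma\cap\R v_1=\Z v_1$: if $x=tv_1\in\Gamma$, then $x-\lfloor t\rfloor v_1=(t-\lfloor t\rfloor)v_1\in\Gamma$ has norm $<\|v_1\|$, hence is $0$ by minimality, so $t\in\Z$.

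Next, let $\pi\colon W\to W/\R v_1$ be the quotient map and put $\overline{\Gamma}\coloneqq\pi(\Gamma)$. The key step — and the one I expect to be the main obstacle — is showing $\overline{\Gamma}$ is a \emph{discrete} subgroup of $W/\R v_1$. To do this I would fix a norm on $W/\R v_1$ coming from the orthogonal complement of $\R v_1$ in $W$, and show $\{\overline{\gamma}\in\overline{\Gamma}:\|\overline{\gamma}\|\le R\}$ is finite for every $R>0$: given such a $\overline{\gamma}$, pick any representative in $\Gamma$ and subtract the integer multiple of $v_1$ that puts its $\R v_1$-component into $[0,\|v_1\|)$; using the orthogonal decomposition along $v_1$, this representative has norm bounded by a constant depending only on $R$ and $\|v_1\|$, and $\Gamma$ being discrete contains only finitely many elements in that bounded region. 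A subgroup of a finite-dimensional real vector space meeting every ball in a finite set has $0$ as an isolated point, hence is discrete; this is where the finitely-many-in-bounded-sets property of discrete subgroups does the real work.

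Finally, since $\dim_{\R}(W/\R v_1)=d-1$, the inductive hypothesis yields $\R$-linearly independent $\overline{v}_2,\dots,\overline{v}_k\in W/\R v_1$ generating $\overline{\Gamma}$ as a $\Z$-module; choose lifts $v_i\in\Gamma$ with $\pi(v_i)=\overline{v}_i$ for $2\le i\le k$. I would then verify the two required properties. For generation: given $\gamma\in\Gamma$, write $\pi(\gamma)=\sum_{i=2}^{k}m_i\overline{v}_i$ with $m_i\in\Z$; then $\gamma-\sum_{i=2}^{k}m_i v_i$ lies in $\Gamma\cap\ker\pi=\Gamma\cap\R v_1=\Z v_1$, so it equals $m_1 v_1$ for some $m_1\in\Z$, giving $\gamma=\sum_{i=1}^{k}m_i v_i$. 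For independence: applying $\pi$ to a relation $\sum_{i=1}^{k}a_i v_i=0$ kills the $v_1$ term and forces $a_2=\dots=a_k=0$ by independence of the $\overline{v}_i$, whence $a_1 v_1=0$ and $a_1=0$. This completes the induction and the proof.
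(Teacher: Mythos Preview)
Your proposal is correct and is precisely the approach the paper takes: the paper does not supply its own proof but simply cites Hall's Lemma~11.3 and notes that the same argument goes through over $\C$ once one views $V$ as a real inner product space of dimension $2\dim_{\C}V$. Your write-up is exactly that argument spelled out in full, so there is nothing to add.
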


Armed with the above Lemma, we now provide a bound on the rank of discrete normal subgroups of a simply connected almost Abelian group in terms of the data of $J(\aleph)$, which completely and uniquely determines the simply connected almost Abelian group.

\begin{proposition} \label{DiscSubgrpDim}
Every discrete normal subgroup $N \subseteq G$ of a simply connected almost Abelian group $G$ with Lie algebra $\aA(\aleph)$ is a free group of rank 
\begin{equation*}
    k \leq \dim_{\R}\big(\ker(J(\aleph))\big) + 2,
\end{equation*} 
generated by $\R$-linearly independent elements $[v_1,t_1], \dots, [v_k, t_k] \in Z(G) \subseteq G = \C^d \rtimes \C$.
\begin{proof}
It is known that any discrete normal subgroup is central. In Prop. \ref{CenterSimpConn} we proved that $Z(G) = \exp\big(Z(\aA(\aleph))\big) \times T_\aleph$. Also, recall from the proof of Prop. \ref{CenterSimpConn} that for any $[v,t],[u,s] \in G$, we may express the product as:
\begin{equation} \label{grpmult}
    [v,t][u,s] = [v + e^{tJ(\aleph)}u, t + s].
\end{equation}
Now by Prop. \ref{CenterSimpConn} if $[u,s] \in Z(G)$ then $u \in \ker(J(\aleph))$ implies $J(\aleph)u = 0$, which in turn implies $e^{tJ(\aleph)}u = u$. Thus by \eqref{grpmult}, we have that for $[v,t],[u,s] \in Z(G)$,
\begin{equation}\label{Ctrgrpmult}
    [v,t][u,s] = [v + u, t + s].
\end{equation}

Thus if we define $f: G \to \C^{d+1}$ to be the homeomorphism $f([v,t]) = (v,t)$, then $f|_{Z(G)}$ is a group homomorphism as well by \eqref{Ctrgrpmult}, and since we are working with matrix Lie groups, a Lie group homomorphism. A Lie group homomorphism maps discrete subgroups to discrete subgroups, thus $f(N)$ is a discrete subgroup.

Thus by Lemma \ref{DiscGrpVS}, $f(N)$ is a free Abelian group generated by $\R$-linearly independent elements $v_1, \dots, v_k \in \C^{d+1}$, and their span satisfies
\begin{equation*}
    \C\{v_i\}_{i=1}^k \subseteq \C\{f(Z(G))\},
\end{equation*}
which implies that
\begin{equation} \label{DiscDimIneq}
    k \leq \dim_{\R}\big( \C \{f(Z(G))\} \big).
\end{equation}

What remains to be shown is that $k \leq \dim_{\C}\big(\ker(J(\aleph))\big) + 2$, which we will show by proving
\begin{equation*}
    \dim_{\R}\big( \C \{f(Z(G))\} \big) \leq \dim_{\R}\big(\ker(J(\aleph))\big) + 2. \tag{$*$}
\end{equation*}
Recall from Prop. \ref{CenterSimpConn} that $Z(G) = \exp_{G}\big(Z(\aA(\aleph))\big) \times T_\aleph$. Now $\dim_{\R}(T_\aleph) \leq 2$ implies that $(*)$ holds if and only if 
\begin{equation} \label{DimIneqToProve}
    \dim_{\R}\big( \R \{ f(\exp_{G}[Z(\aA(\aleph))]) \} \big) \leq \dim_{\R}(\ker(J(\aleph)))
\end{equation}
holds. Let $\{w_i\}_{i=1}^m$ be a basis for $\C\{(f \circ \exp_{G})(Z(\aA(\aleph)))\}$. Without loss of generality, we may suppose $\{w_i\}_{i=1}^m \subseteq (f \circ \exp)(Z(\aA(\aleph)))$. Recall from Remark \ref{ExpIsId} that $\exp_{G}(v,t) = [v,t]$ for all $(v,t) \in \ker(J(\aleph)) \oplus \C \supseteq Z(\aA(\aleph))$. Then \eqref{DimIneqToProve} follows from the fact that $\exp_{G}((v,t)) = [v,t]$ implies $(f \circ \exp_{G})(v,t) = (v,t)$. 
\end{proof}
\end{proposition}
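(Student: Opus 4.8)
The plan is to identify $N$ with a lattice inside a real vector space whose dimension is controlled by $\ker(J(\aleph))$, and then to quote Lemma \ref{DiscGrpVS}.

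First I would recall the standard fact that a discrete normal subgroup of a connected group is central: for each fixed $n \in N$ the conjugation map $g \mapsto gng^{-1}$ is continuous from the connected group $G$ into the discrete set $N$, hence constant, so it equals $n$ for every $g$. Thus $N \subseteq Z(G)$, and Proposition \ref{CenterSimpConn} gives
\begin{equation*}
 N \;\subseteq\; Z(G) \;=\; \exp_{G}\big(Z(\aA(\aleph))\big) \times T_\aleph \;=\; \{\,[u,s] \divides u \in \ker(J(\aleph)),\ e^{sJ(\aleph)} = \mathbbm{1}\,\}.
\end{equation*}
Next I would linearize the group law on $Z(G)$: as observed in the proof of Proposition \ref{CenterSimpConn}, if $[v,t],[u,s] \in Z(G)$ then $u \in \ker(J(\aleph))$, so $e^{tJ(\aleph)}u = u$ and \eqref{multiplicationOfAAGpElsEq} reduces to $[v,t][u,s] = [v+u,\,t+s]$. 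Hence the coordinate map $f : G \to \C^{d+1}$, $f([v,t]) = (v,t)$ --- a biholomorphism, in particular a homeomorphism, by Proposition \ref{SimpConnRep} --- restricts on $Z(G)$ to a homeomorphism and group isomorphism onto the additive subgroup $f(Z(G)) = \ker(J(\aleph)) \oplus T_\aleph$ of $\C^{d+1}$, where the identity $f\big(\exp_{G}(Z(\aA(\aleph)))\big) = \ker(J(\aleph))$ uses Remark \ref{ExpIsId}.

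Since $f$ is a homeomorphism and $N$ is discrete, $f(N)$ is a discrete subgroup of the real vector space $\C^{d+1} \cong \R^{2d+2}$. By Lemma \ref{DiscGrpVS} there are $\R$-linearly independent $w_1,\dots,w_k \in \C^{d+1}$ with $f(N) = \Z w_1 \oplus \cdots \oplus \Z w_k$, so $N$ is free abelian of rank $k$ with generators $[v_i,t_i] \coloneqq f^{-1}(w_i) \in Z(G)$. Each $w_i$ lies in $f(N) \subseteq f(Z(G)) = \ker(J(\aleph)) \oplus T_\aleph$, so the $\R$-span of $\{w_1,\dots,w_k\}$ is contained in $\ker(J(\aleph)) + \mathrm{span}_{\R}(T_\aleph)$, whence
\begin{equation*}
 k \;=\; \dim_{\R}\mathrm{span}_{\R}\{w_1,\dots,w_k\} \;\le\; \dim_{\R}\big(\ker(J(\aleph))\big) + \dim_{\R}\mathrm{span}_{\R}(T_\aleph).
\end{equation*}
Finally, $T_\aleph \subseteq \C$ and $\dim_{\R}\C = 2$ force $\dim_{\R}\mathrm{span}_{\R}(T_\aleph) \le 2$, giving $k \le \dim_{\R}\big(\ker(J(\aleph))\big) + 2$, as required.

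I do not anticipate a real obstacle: the substance is already packaged in Proposition \ref{CenterSimpConn}, Remark \ref{ExpIsId}, and Lemma \ref{DiscGrpVS}, and the argument is essentially an assembly. The two points needing care are (i) justifying that a discrete normal subgroup is central (the one-line conjugation argument above), and (ii) keeping the final count over $\R$ rather than $\C$, since $T_\aleph$ occupies a single complex coordinate yet may contribute real dimension $2$. If one wanted a sharper bound, the only case to examine is $\dim_{\R}T_\aleph = 2$, which by Lemma \ref{TalephTrivCond} forces $T_\aleph = \C$ --- a degenerate situation in which $\ker(J(\aleph))$ is large and the inequality is far from tight.
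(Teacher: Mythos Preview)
Your proof is correct and follows essentially the same route as the paper: reduce to $N \subseteq Z(G)$, linearize the group law on $Z(G)$ via the coordinate map $f$, apply Lemma \ref{DiscGrpVS} to the discrete image $f(N)$, and bound the rank by the real dimension of $\ker(J(\aleph)) \oplus T_\aleph$. Your presentation is in fact slightly cleaner, since you identify $f(Z(G)) = \ker(J(\aleph)) \oplus T_\aleph$ directly rather than passing through spans as the paper does.
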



\section{Subgroups and Subalgebras}
We classify connected subgroups, prove the nonexistence of compact connected subgroups of a simply connected group $\widetilde{G}$, and study some relationships between subgroups of $\widetilde{G}$ and quotients $G \coloneqq \widetilde{G}/N$ of $\widetilde{G}$ by discrete normal subgroups $N$.

\begin{remark} \label{SubalgebraForms}
Let $G$ be a simply connected almost Abelian Lie group with Lie Algebra $\aA(\aleph) = \C^d \rtimes \C$. Then by Proposition 4 in  \cite{Ave16} every  Lie subalgebra $\mathbf{L} \subset \aA(\aleph)$ takes one of the following two forms: \begin{enumerate}
    \item $\mathbf{L} = \mathbf{W} \subset \C^d$ is an Abelian Lie subalgebra.
    \item $\mathbf{L}$ is of the form $$\mathbf{L} = \left\{ (w + tv_0, t) \in \C^d \rtimes \C | w \in \mathbf{W}, t \in \C \right\},$$ 
    where $v_0 \in \C^d$ is fixed and $\mathbf{W} \subset \C^d$ is an $\ad_{e_0}$-invariant vector subspace. Here $\mathbf{L}$ is Abelian if and only if $\mathbf{W} \subset Z(\aA(\aleph)).$
\end{enumerate}
\end{remark}


Recall that,\footnote{Theorem 5.20 in \cite{Hall15}} to every Lie subalgebra $\mathbf{L}$ of a (any) Lie algebra there exists a unique connected Lie subgroup $H_\mathbf{L}$ with Lie algebra $\bL$.  We now find explicit forms for these connected Lie subgroups.


\begin{proposition} \label{ConnSubGrpForms}
The connected Lie subgroup $H_{\bL} \subset G$ of the simply connected almost Abelian Lie group
$G$ with Lie algebra $\bL$ as in Remark \ref{SubalgebraForms} is given by either of the following two forms, accordingly:
\begin{enumerate}
    \item[1.] $$H_{\bL} = \left\{ [w, 0] \in \C^d \rtimes \C \;|\; w \in \mathbf{W} \right\} = \exp({\bL}),$$
    \item[2.] $$H_{\bL} = \left\{ \left[w + \frac{e^{tJ(\aleph)} - \mathbbm{1}}{J(\aleph)}v_0, t\right] \in \C^d \rtimes \C \;|\; w \in \mathbf{W},\;\; t \in \C \right\} \cong \;\; \exp(\mathbf{W}) \rtimes \C. $$
\end{enumerate}
\begin{proof}
That $H_{\bL}$ is indeed a Lie group can be checked via the faithful matrix representation in Prop. \ref{SimpConnRep} and further the product rule as given in Prop. \ref{CenterSimpConn}. To show closure under the group operation in Case 2, we observe that for any 
$$\left[w + \frac{e^{tJ(\aleph)} - \mathbbm{1}}{J(\aleph)}v_0, t\right], \left[u + \frac{e^{sJ(\aleph)} - \mathbbm{1}}{J(\aleph)}v_0, s\right] \in H_{\bL}, \quad w, u \in W,\quad t,s \in \C,$$
we have
\begin{multline*}
      \left[w + \frac{e^{tJ(\aleph)} - \mathbbm{1}}{J(\aleph)}v_0, t\right] \left[u + \frac{e^{sJ(\aleph)} - \mathbbm{1}}{J(\aleph)}v_0, s\right]  \\ 
      = \left[(w + e^{tJ(\aleph)}u) +  \frac{e^{(t+s)J(\aleph)} - \mathbbm{1}}{J(\aleph)}v_0, t+s \right].
\end{multline*}
For this to be in $H_{\bL}$ we need $e^{tJ(\aleph)}u \in \mathbf{W}$, which is guaranteed because $\mathbf{W}$ is $J(\aleph)$-invariant.

For Case 1, the exponential map as given in Lemma \ref{SimpConnExp} gives the desired result directly. For Case 2, take some $(w_0 + t_0v_0, t_0) \in \bL$. Consider a path $\gamma: (-1, 1) \longrightarrow \mathbf{W} \oplus \C$ defined by 
$$\gamma(\tau) = (w(\tau), t(\tau)),$$ where $$(w(0), t(0)) = (0,0), \qquad (w'(0), t'(0)) = (w_0, t_0) \in \mathbf{W} \oplus \C.$$ 
Then we have, $$\dv{}{\tau}\left[w + \frac{e^{t(\tau)J(\aleph)} - \mathbbm{1}}{J(\aleph)}v_0, t(\tau)\right] \biggr|_{\tau = 0} = (w_0 + t_0v_0, t_0).$$
Thus the Lie algebra of $H_{\bL}$ is $\bL$. 

Next, consider the map $\Phi: H_{\bL} \rightarrow 
\exp(\mathbf{W}) \rtimes \C$ given by $$\Phi[v,t] = \left[ -\frac{e^{tJ(\aleph)} - \mathbbm{1}}{J(\aleph)}v_0 + v, t\right].$$
It can be easily shown that $\Phi$ is a Lie group isomorphism and we provide the details for completeness. First, we check that $\Phi$ is bijective by showing its inverse is given by: 
$$\Phi^{-1}[v,t] = \left[ \frac{e^{tJ(\aleph)} - \mathbbm{1}}{J(\aleph)}v_0 + v, t\right].$$
Indeed,
\begin{equation*}
    \Phi^{-1} \circ \Phi[v,t] = \left[ \frac{e^{tJ(\aleph)} - \mathbbm{1}}{J(\aleph)}v_0 + \left[-\frac{e^{tJ(\aleph)} - \mathbbm{1}}{J(\aleph)}v_0 + v,\right] t\right] = [v, t].
\end{equation*}
We next show $\Phi$ is a Lie group homomorphism, 
\begin{align*}
    \Phi[v,t] \cdot \Phi[u,s] &=   \left[-\frac{e^{tJ(\aleph)} - \mathbbm{1}}{J(\aleph)}v_0 + v, t\right] \cdot  \left[-\frac{e^{tJ(\aleph)} - \mathbbm{1}}{J(\aleph)}v_0 + u, s\right] \\
    &= \left[-\frac{e^{(t+s) J(\aleph)} - \mathbbm{1}}{J(\aleph)}v_0 + (v + e^{tJ(\aleph)}u), t + s\right]  \\
    &= \Phi[v + e^{tJ(\aleph)}u, t+s] = \Phi([v,t] \cdot [u,s]).
\end{align*}
Lastly note that for $ \left[w + \frac{e^{tJ(\aleph)} - \mathbbm{1}}{J(\aleph)}v_0, t\right] \in H_{\bL}$, $$\Phi \left[w + \frac{e^{tJ(\aleph)} - \mathbbm{1}}{J(\aleph)}v_0, t\right] = \left[ -\frac{e^{tJ(\aleph)} - \mathbbm{1}}{J(\aleph)}v_0 + w + \frac{e^{tJ(\aleph)} - \mathbbm{1}}{J(\aleph)}v_0, t\right] = [w, t],$$
which is indeed and element of $\exp(\mathbf{W}) \rtimes \C$, showing that $\Phi$ maps $H_{\bL} \rightarrow \exp(\mathbf{W}) \rtimes \C$ as desired. 
\end{proof}
\end{proposition}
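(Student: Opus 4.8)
The plan is to handle the two cases of Remark \ref{SubalgebraForms} separately: in each case I would write down an explicit candidate subset, verify it is a subgroup that is connected and has Lie algebra $\bL$, and then invoke the uniqueness of the connected Lie subgroup attached to a given subalgebra (Theorem 5.20 of \cite{Hall15}) to conclude it equals $H_{\bL}$. Throughout I would use the bracket-tuple notation $[v,t]$ and the multiplication law $[v,t][u,s]=[v+e^{tJ(\aleph)}u,\,t+s]$ established in the proof of Proposition \ref{CenterSimpConn}, together with the exponential formula of Lemma \ref{SimpConnExp}, and I would record once at the outset that the symbol $\frac{e^{tJ(\aleph)}-\mathbbm{1}}{J(\aleph)}$ means the everywhere-defined power series $\sum_{n\ge 1}\frac{t^{n}}{n!}J(\aleph)^{n-1}$, so that no invertibility of $J(\aleph)$ is needed anywhere.

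For Case 1, $\bL=\mathbf{W}\subseteq\C^d$ consists of the elements $(w,0)$, so Lemma \ref{SimpConnExp} specializes at $t=0$ to $\exp(w,0)=[w,0]$ (the power-series prefactor disappears and only $w$ survives). Since $[w,0][u,0]=[w+u,0]$ and $[w,0]^{-1}=[-w,0]$, the set $\{[w,0]:w\in\mathbf{W}\}$ is already a subgroup; it is connected as the continuous image of the connected space $\mathbf{W}$; and differentiating a curve $\tau\mapsto[w(\tau),0]$ with $w(0)=0$, $w'(0)=w_0$ gives tangent vector $(w_0,0)$, so its Lie algebra is $\mathbf{W}=\bL$. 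Hence it equals $H_{\bL}$, and it also equals $\exp(\bL)$.

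For Case 2, take the candidate $H\coloneqq\left\{\left[w+\tfrac{e^{tJ(\aleph)}-\mathbbm{1}}{J(\aleph)}v_0,\,t\right] : w\in\mathbf{W},\ t\in\C\right\}$. The main verification is closure under multiplication: expanding the product of two such elements by the multiplication law, the $t$-components add and the $\C^d$-component becomes $(w+e^{tJ(\aleph)}u)+\left(\tfrac{e^{tJ(\aleph)}-\mathbbm{1}}{J(\aleph)}v_0+e^{tJ(\aleph)}\tfrac{e^{sJ(\aleph)}-\mathbbm{1}}{J(\aleph)}v_0\right)$; here $e^{tJ(\aleph)}u\in\mathbf{W}$ because $\mathbf{W}$ is $J(\aleph)$-invariant, and the parenthesized term collapses to $\tfrac{e^{(t+s)J(\aleph)}-\mathbbm{1}}{J(\aleph)}v_0$ since $e^{tJ(\aleph)}\tfrac{e^{sJ(\aleph)}-\mathbbm{1}}{J(\aleph)}=\tfrac{e^{(t+s)J(\aleph)}-e^{tJ(\aleph)}}{J(\aleph)}$ (all factors being power series in $J(\aleph)$). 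The inverse of an element of $H$ is read off the same way. Then a curve $\tau\mapsto\left[w(\tau)+\tfrac{e^{t(\tau)J(\aleph)}-\mathbbm{1}}{J(\aleph)}v_0,\,t(\tau)\right]$ with $(w(0),t(0))=(0,0)$ and $(w'(0),t'(0))=(w_0,t_0)$ has derivative $(w_0+t_0v_0,\,t_0)$ at $\tau=0$ — exactly the generic element of $\bL$ — so $\mathrm{Lie}(H)=\bL$ and, by uniqueness, $H=H_{\bL}$.

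Finally, for the asserted isomorphism I would define $\Phi\colon H_{\bL}\to\exp(\mathbf{W})\rtimes\C$ by $\Phi[v,t]=\left[-\tfrac{e^{tJ(\aleph)}-\mathbbm{1}}{J(\aleph)}v_0+v,\,t\right]$, verify directly that $\left[v,t\right]\mapsto\left[\tfrac{e^{tJ(\aleph)}-\mathbbm{1}}{J(\aleph)}v_0+v,\,t\right]$ is a two-sided inverse, that $\Phi$ is a homomorphism (again using that the power-series prefactors commute with $e^{tJ(\aleph)}$, which lets the extra $v_0$-term be pushed through the multiplication), and that $\Phi$ carries the generic element $\left[w+\tfrac{e^{tJ(\aleph)}-\mathbbm{1}}{J(\aleph)}v_0,\,t\right]$ of $H_{\bL}$ to $[w,t]\in\exp(\mathbf{W})\rtimes\C$. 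The only genuinely delicate point in the whole argument is the algebraic collapse $\tfrac{e^{tJ(\aleph)}-\mathbbm{1}}{J(\aleph)}+e^{tJ(\aleph)}\tfrac{e^{sJ(\aleph)}-\mathbbm{1}}{J(\aleph)}=\tfrac{e^{(t+s)J(\aleph)}-\mathbbm{1}}{J(\aleph)}$, which is used both for closure of $H$ and for checking $\Phi$ is a homomorphism; the remainder is routine bookkeeping with the multiplication law and one-parameter curves.
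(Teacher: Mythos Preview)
Your proposal is correct and follows essentially the same approach as the paper: both verify closure of the candidate set in Case 2 via the identity $\tfrac{e^{tJ(\aleph)}-\mathbbm{1}}{J(\aleph)}+e^{tJ(\aleph)}\tfrac{e^{sJ(\aleph)}-\mathbbm{1}}{J(\aleph)}=\tfrac{e^{(t+s)J(\aleph)}-\mathbbm{1}}{J(\aleph)}$ together with $J(\aleph)$-invariance of $\mathbf{W}$, compute the Lie algebra by differentiating the same curve, and then exhibit the very same map $\Phi[v,t]=\bigl[-\tfrac{e^{tJ(\aleph)}-\mathbbm{1}}{J(\aleph)}v_0+v,\,t\bigr]$ as the isomorphism to $\exp(\mathbf{W})\rtimes\C$. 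Your write-up is in fact slightly more careful in places (explicitly noting connectedness, spelling out the power-series meaning of the prefactor, and invoking uniqueness from \cite{Hall15}), but there is no substantive difference in strategy.
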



\begin{remark} \label{NoConnCompSubGrps}
Prop. \ref{ConnSubGrpForms} implies that no connected subgroup $H$ of a simply connected almost abelian group $G$ is compact.
\end{remark}


\begin{lemma} \label{ProjSubgrpCorr}
Let $\widetilde{G}$ be a simply connected almost Abelian Lie group and $N \subseteq \widetilde{G}$ a normal subgroup. Let $G \coloneqq \widetilde{G} / N$ be the resultant connected almost Abelian Lie group. Then every connected subgroup $H \subseteq G$ is the projection $H = \widetilde{H}/N$ of a unique connected Lie subgroup $\widetilde{H} \subseteq \widetilde{G}$.
\begin{proof} Since we have a simply connected almost abelian group, we may use the matrix representation given in Prop. \ref{SimpConnRep}, and thus our almost abelian group is a matrix Lie group, which is to say it is a closed subgroup of $GL_n(\C)$.

Let $\mathbf{L}_{\widetilde{G}}$ be the Lie algebra of $\widetilde{G}$, and let $\mathbf{L}_{G}$ be the Lie algebra of $G$. The quotient map $q_N : \widetilde{G} \to G$ is a surjective complex Lie group homomorphism, and its derivative $dq_N : \mathbf{L}_{\widetilde{G}} \to \mathbf{L}_{G}$ is a surjective Lie algebra homomorphism. The preimage $dq_N\inv(\mathbf{L}_{H})$ of the Lie algebra $\mathbf{L}_H$ of $H$ is a Lie subalgebra of $\mathbf{L}_{\widetilde{G}}$, and thus is the Lie algebra of the unique connected subgroup $\widetilde{H} \leq \widetilde{G}$. The image $q_N(\widetilde{H}) \leq G$ is a connected subgroup with Lie algebra $\mathbf{L}_{H}$, which by uniqueness must be $q_N(\widetilde{H}) = H$. Finally, if $\widetilde{H}' \leq \widetilde{G}$ is another connected subgroup with $q_N(\widetilde{H}') = H$ then $\mathbf{L}_{\widetilde{H}'} = \mathbf{L}_H$, so that again by uniqueness $\widetilde{H}' = \widetilde{H}$.
\end{proof}
\end{lemma}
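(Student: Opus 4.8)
The plan is to transport the subgroup--subalgebra correspondence (Theorem 5.20 of \cite{Hall15}, already invoked just before Proposition \ref{ConnSubGrpForms}) across the quotient homomorphism $q_N\colon\widetilde G\to G$ and its differential $dq_N$; the mechanism that makes this work is that $dq_N$ is surjective, so that forming preimages of Lie subalgebras is inverse to forming images of connected subgroups.

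First I would fix the data. By the faithful matrix model of Proposition \ref{SimpConnRep}, $\widetilde G$ is a matrix Lie group and the canonical projection $q_N\colon\widetilde G\to G=\widetilde G/N$ is a surjective complex Lie group homomorphism; differentiating at the identity gives a surjective complex Lie algebra homomorphism $dq_N\colon\mathrm{Lie}(\widetilde G)\to\mathrm{Lie}(G)$ with $\ker dq_N=\mathrm{Lie}(N)$.

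Given a connected Lie subgroup $H\subseteq G$ with Lie algebra $\mathbf L_H\subseteq\mathrm{Lie}(G)$, I would set $\mathbf L_{\widetilde H}\coloneqq dq_N\inv(\mathbf L_H)$. Since the preimage of a Lie subalgebra under a Lie algebra homomorphism is again a Lie subalgebra, $\mathbf L_{\widetilde H}$ is a subalgebra of $\mathrm{Lie}(\widetilde G)$, hence is the Lie algebra of a unique connected Lie subgroup $\widetilde H\subseteq\widetilde G$. To see that $q_N$ carries $\widetilde H$ onto $H$, observe that $q_N(\widetilde H)$ is a connected Lie subgroup of $G$ whose Lie algebra is $dq_N(\mathbf L_{\widetilde H})=dq_N(dq_N\inv(\mathbf L_H))=\mathbf L_H$, the last equality using surjectivity of $dq_N$; by the uniqueness clause of the correspondence this forces $q_N(\widetilde H)=H$. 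For uniqueness of $\widetilde H$, suppose $\widetilde H'\subseteq\widetilde G$ is another connected Lie subgroup with $q_N(\widetilde H')=H$ and $N\subseteq\widetilde H'$. Then $dq_N(\mathbf L_{\widetilde H'})=\mathbf L_H$, while $\ker dq_N=\mathrm{Lie}(N)\subseteq\mathbf L_{\widetilde H'}$; these together give $\mathbf L_{\widetilde H'}=dq_N\inv(\mathbf L_H)=\mathbf L_{\widetilde H}$, and hence $\widetilde H'=\widetilde H$, again by uniqueness in the correspondence.

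The step I expect to require the most care is not a genuine obstruction but a piece of bookkeeping: one must be sure that $q_N(\widetilde H)$ really is the connected (immersed) Lie subgroup integrating $dq_N(\mathbf L_{\widetilde H})$, so that the uniqueness clause of the subgroup--subalgebra correspondence applies, and one must read the symbol $\widetilde H/N$ appropriately --- as the image $q_N(\widetilde H)$, equivalently the quotient of $\widetilde H$ by $\widetilde H\cap N$ --- especially when $N$ is discrete and therefore need not already lie inside $\widetilde H$. Granting these points, the rest is a formal consequence of the functoriality of $\mathrm{Lie}(-)$ and the surjectivity of $dq_N$.
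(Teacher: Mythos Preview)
Your proposal is correct and follows essentially the same argument as the paper: pull back $\mathbf{L}_H$ along the surjective differential $dq_N$, integrate to a unique connected $\widetilde H$, and use the subgroup--subalgebra correspondence for both existence and uniqueness. If anything, you are more careful than the paper in the uniqueness step---you explicitly use $\ker dq_N\subseteq\mathbf{L}_{\widetilde H'}$ to conclude $\mathbf{L}_{\widetilde H'}=dq_N^{-1}(\mathbf{L}_H)$, whereas the paper simply asserts the equality of Lie algebras---and your closing remark about how to read $\widetilde H/N$ is well taken.
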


\begin{lemma} \label{OGLem10}
 Let $G$ be a simply connected almost Abelian group, $N \subseteq G$  a discrete normal subgroup and $H \subseteq G$ a connected subgroup. Then there exists a subgroup $B \subseteq N $ such that $N = (N \cap H) \times B$. 
\begin{proof}
We use Prop. \ref{ConnSubGrpForms} to write:
\begin{equation} \label{FormsOfConnSubgrp}
    H \cong \begin{cases}
        \exp(\mathbf{W}) \\
        \exp(\mathbf{W}) \times \C \\
        \exp(\mathbf{W}) \rtimes \C
    \end{cases}
\end{equation}
where $\mathbf{W} \subseteq \C^d$ is a vector subspace.

We will prove $N \cap H$ is a pure subgroup, and use Corollary 28.3 in \cite{fuchs1970infinite} to reach our desired conclusion that $N \cap H$ is a direct factor. Let $[v,t] \in N$ and $n \in \N \st [v,t]^n = [nq, nt] \in N \cap H$. Then $nv \in \mathbf{W}$ implies $v \in \mathbf{W}$. Furthermore, if we are in the first case in \eqref{FormsOfConnSubgrp}, then $t = 0$ implies $nt = 0$. Meanwhile, if we are in the second or third case case in \eqref{FormsOfConnSubgrp}, we would have $t \in \C$ implies $\frac{t}{n} \in \C$. In either case we therefore have $[v,t] \in H$. Since $[v,t] \in N$ by assumption, we therefore have $[v,t] \in N \cap H$. By Corollary 28.3 in \cite{fuchs1970infinite}, we therefore have that $N \cap H$ is a direct factor of $N$.
\end{proof}
\end{lemma}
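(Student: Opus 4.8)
The plan is to prove that $N \cap H$ is a \emph{pure} subgroup of $N$ and then quote the structure theory of finitely generated abelian groups. First, by Proposition \ref{DiscSubgrpDim} the discrete normal subgroup $N$ is central in $G$ and is a finitely generated free abelian group, hence so is the subgroup $N \cap H$. By Corollary 28.3 in \cite{fuchs1970infinite} a subgroup of such a group is a direct factor exactly when it is pure, so it suffices to show that if $g \in N$ and $g^{n} \in N \cap H$ for some integer $n \geq 1$, then $g \in N \cap H$; the desired $B$ is then any chosen complementary direct summand, so that $N = (N \cap H) \times B$.

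To verify this, write $g = [v,t] \in N$. Because $N \subseteq Z(G)$, Proposition \ref{CenterSimpConn} gives $v \in \ker(J(\aleph))$ and $e^{tJ(\aleph)} = \mathbbm{1}$, and the multiplication rule for central elements recorded in the proof of Proposition \ref{DiscSubgrpDim} yields $g^{n} = [v,t]^{n} = [nv, nt]$. Hence I must show that $[nv,nt] \in H$ implies $[v,t] \in H$, using the two shapes of $H$ from Proposition \ref{ConnSubGrpForms}. If $H = \{[w,0] \mid w \in \mathbf{W}\}$, then $[nv,nt] \in H$ forces $nt = 0$, hence $t = 0$, and $nv \in \mathbf{W}$, hence $v \in \mathbf{W}$ because $\mathbf{W}$ is a $\C$-subspace, so $[v,t] = [v,0] \in H$. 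If $H$ has typical element $[\,w + \tfrac{e^{tJ(\aleph)} - \mathbbm{1}}{J(\aleph)}v_0,\ t\,]$ with $w \in \mathbf{W}$, I would first record the scaling identity $\tfrac{e^{nsJ(\aleph)} - \mathbbm{1}}{J(\aleph)} = n\,\tfrac{e^{sJ(\aleph)} - \mathbbm{1}}{J(\aleph)}$, which holds for every $s$ with $e^{sJ(\aleph)} = \mathbbm{1}$ and follows by induction on $n$ from $\tfrac{e^{(a+b)J(\aleph)} - \mathbbm{1}}{J(\aleph)} = \tfrac{e^{aJ(\aleph)} - \mathbbm{1}}{J(\aleph)}\,e^{bJ(\aleph)} + \tfrac{e^{bJ(\aleph)} - \mathbbm{1}}{J(\aleph)}$ (a comparison of power series in $J(\aleph)$). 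Applying it with $s = t$, the condition $[nv,nt] \in H$ becomes $nv = w + n\,\tfrac{e^{tJ(\aleph)} - \mathbbm{1}}{J(\aleph)}v_0$ for some $w \in \mathbf{W}$; dividing by $n$, which is legitimate since $\mathbf{W}$ is a $\C$-subspace, gives $v - \tfrac{e^{tJ(\aleph)} - \mathbbm{1}}{J(\aleph)}v_0 \in \mathbf{W}$, and this says precisely that $[v,t] \in H$. In both cases $[v,t] \in H$, so together with $[v,t] \in N$ we get $[v,t] \in N \cap H$, as needed.

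The hard part will be the second case: one must check that the constraint defining the $\C^{d}$-component of $H$ is preserved under dividing the coordinates by $n$, and this is exactly where centrality of $N$ does the work — it forces $e^{tJ(\aleph)} = \mathbbm{1}$, which simultaneously makes the $n$-th power equal to $[nv,nt]$ and makes the scaling identity for $\tfrac{e^{tJ(\aleph)} - \mathbbm{1}}{J(\aleph)}$ available. The first case and the reduction to a purity statement via \cite{fuchs1970infinite} are routine bookkeeping.
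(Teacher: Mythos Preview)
Your argument is correct and follows the same route as the paper: show that $N\cap H$ is a pure subgroup of the finitely generated free abelian group $N$ and then apply Corollary 28.3 of \cite{fuchs1970infinite}. The only difference is cosmetic---where the paper passes to the isomorphic form $H\cong\exp(\mathbf{W})\rtimes\C$ from Proposition \ref{ConnSubGrpForms} (so that membership in $H$ is simply ``$v\in\mathbf{W}$''), you keep the $v_0$-offset explicit and handle it via the scaling identity for $\tfrac{e^{tJ(\aleph)}-\mathbbm{1}}{J(\aleph)}$, using the centrality condition $e^{tJ(\aleph)}=\mathbbm{1}$ supplied by Proposition \ref{CenterSimpConn}.
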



\begin{proposition} \label{NonCompactConnAAGrp}
Let $G \coloneqq \widetilde{G} / \Gamma$ be a connected almost Abelian Lie group (where $\widetilde{G}$ is the simply connected universal cover). Then $G$ is never compact. 
\begin{proof}
Let $\aA(\aleph)$ be the Lie algebra of $G$ and $\widetilde{G}$, and let $\aA(\aleph) = \C^{d} \oplus \C$ (where $\C^d$ is an Abelian subalgebra). Recall from \cite{Ave16} that for any almost Abelian Lie algebra
\begin{equation*}
    \ker(\ad_{e_0}) = Z(\aA(\aleph)),
\end{equation*}
and so in particular
\begin{equation*}
    \dim_{\C}(\ker(\text{ad}_{e_0})) = \dim_{\C}(\ker(J(\aleph))) = \dim_{\C}(Z(\aA(\aleph))).
\end{equation*}
Assume for contradiction $\dim_{\C}(\ker(J(\aleph))) = d$ (note that we in general cannot have that $\dim_{\C}(\ker(J(\aleph))) = d+1$ as the equality would force algebra to be Abelian). First, notice that if $\mathbf{V}$ is the orthogonal space to the dimension $d$ central subspace $\ker(J(\aleph)) = Z(\aA(\aleph))$ of $\aA(\aleph)$, then $\mathbf{V}$ is Abelian because it has dimension 1. Now any element $W \in \aA(\aleph)$ can be linearly decomposed: $W = W_1 + W_2$, such that $W_1 \in \mathbf{V}$ and $W_2 \in Z(\aA(\aleph))$. By linearity of the Lie bracket then, we have that for all $\alpha V \in \mathbf{V},\ [\alpha V, W] = \alpha[V, W_1] + \alpha[V, W_2] = 0 + 0 = 0$. Thus $\mathbf{V} \subseteq Z(\aA(\aleph))$ which implies $Z(\aA(\aleph)) = \aA(\aleph)$, and thus $\aA(\aleph)$ is Abelian, a contradiction (note, $\mathbf{V} \subseteq Z(\aA(\aleph))$ was already a contradiction).

So there exists $X \in \C^d$ such that $X \notin \ker(\text{ad}_{e_0}) = Z(\aA(\aleph))$. Consider the one parameter subgroup $H_X = \{ \exp_{\widetilde{G}}(\tau X) \, | \, \tau \in \C\}$. By Prop. \ref{DiscSubgrpDim}, we know that $Z(\widetilde{G}) = \exp_{\widetilde{G}}(Z(\aA(\aleph))) \times T_\aleph$. By construction, $\exp_{\widetilde{G}}(X) \notin \exp_{\widetilde{G}}(Z(\aA(\aleph)))$.

Now assume for contradiction there exists $\tau \in \C^\times$ such that $\exp_{\widetilde{G}}(\tau X)$ is an element of $\exp_{\widetilde{G}}(Z(\aA(\aleph)))$. Then $\tau X \in Z(\aA(\aleph))$ implies $[\tau X, Y] = 0 \foreach Y \in \aA(\aleph)$, which implies $[X,Y] = 0 \foreach Y \in \aA(\aleph)$ and thus $X \in Z(\aA(\aleph))$, a contradiction. Thus $\exp_{\widetilde{G}}(\tau X) \notin \exp_{\widetilde{G}}(Z(\aA(\aleph))) \foreach \tau \in \C^\times$. Thus $H_X \cap Z(G) = \{1\}$, and thus $H_X \cap \Gamma = \{1\}$ because all discrete normal subgroups of a Lie group are central. 

Now consider the quotient map 
\begin{equation*}
    \pi|_{H_X}: H_X \to \widetilde{G}/\Gamma
\end{equation*}
which---when not restricted---is also a surjective complex Lie group homomorphism.

Now due to our result that $H_X$ intersects $\Gamma$ only at the identity of $\widetilde{G}$, we have that $H_X \cong H_X / (H_X \cap \Gamma)$. Simultaneously, we know that $\ker(\pi|_{H_X}) = H_X \cap \Gamma$. Thus, $\pi(H_X) \cong H_X$. 

Consider the algebra representation for $\aA(\aleph)$ as:
\begin{equation*}
    \aA(\aleph) = \left\{ \begin{pmatrix}
    0 & 0 & 0 \\
    v & {tJ(\aleph)} & 0 \\
    t & 0 & 0
    \end{pmatrix} \middle| (v,t) \in \C^d \times \C \right\}
\end{equation*}
From Prop. \ref{SimpConnRep}, we know that the matrix exponential takes this representation of the Lie algebra to the simply connected Lie group that has $\aA(\aleph)$ as its Lie algebra. By Lemma \ref{SimpConnExp}, we see that, specifically, the exponential of an element of $\aA(\aleph)$ under this representation is:
\begin{equation} \label{SimpConnExpRep}
    \exp_{\widetilde{G}}\begin{pmatrix}
        0 & 0 & 0 \\
        v & {tJ(\aleph)} & 0 \\
        t & 0 & 0
    \end{pmatrix}
    =
    \begin{pmatrix}
        1 & 0 & 0 \\
        \frac{e^{t J(\aleph)} - \mathbbm{1}}{tJ(\aleph)}v  & e^{tJ(\aleph)} & 0 \\
        t & 0 & 1
    \end{pmatrix}
\end{equation}
Now recall we defined $X$ to be an element of $\C^d$. So plugging in $(\tau X,0)$ for $(v,t)$ into \eqref{SimpConnExpRep}, we get
\begin{equation*}
    \exp_{\widetilde{G}}\begin{pmatrix}
        0 & 0 & 0 \\
        \tau X & 0 & 0 \\
        0 & 0 & 0
    \end{pmatrix}
    =
    \begin{pmatrix}
        1 & 0 & 0 \\
        \tau X  & 1 & 0 \\
        0 & 0 & 1
    \end{pmatrix}
\end{equation*}
From this, it is apparent that $H_X$ is closed (contains all its limit points) and path connected (and thus connected).

Since any connected subgroup of $\widetilde{G}$ is not compact by Remark \ref{NoConnCompSubGrps}, $H_X$ is not compact. Then the image of $H_X$ under $\pi$ is not compact, and therefore $G$ contains a noncompact Lie subgroup. If $G$ were to be compact, then any closed subgroup would be compact as well. However, we showed that $H_X$ and thus\footnote{Since a quotient map is an open map.} $\pi(H_X)$ is a closed non-compact Lie subgroup, thus $G$ cannot be compact.
\end{proof}
\end{proposition}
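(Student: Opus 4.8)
The plan is to argue by contradiction. Suppose $G=\widetilde{G}/\Gamma$ is compact. Since every closed subgroup of a compact group is compact, it suffices to exhibit a closed non-compact subgroup of $G$. The one I would use is the image under the quotient map $\pi\colon\widetilde{G}\to G$ of a suitably chosen one-parameter subgroup of $\widetilde{G}$ running in the abelian directions $\mathbf{V}=\C^{d}$ of $\aA(\aleph)=\C^{d}\rtimes\C$---chosen to be transverse to $\ker J(\aleph)$ so that it survives the quotient.

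Because $\aA(\aleph)$ is non-Abelian, $J(\aleph)\neq 0$, so $\ker J(\aleph)\subsetneq\C^{d}$ and we may fix $X\in\C^{d}$ with $X\notin\ker J(\aleph)=Z(\aA(\aleph))$. Put $H_{X}\coloneqq\exp_{\widetilde{G}}(\C X)$. Evaluating the exponential formula of Lemma \ref{SimpConnExp} (equivalently Remark \ref{ExpIsId}) at $t=0$ gives $\exp_{\widetilde{G}}(\tau X)=[\tau X,0]$, so $H_{X}=\{[\tau X,0]:\tau\in\C\}$ is a closed connected subgroup of $\widetilde{G}$, and it is non-compact by Remark \ref{NoConnCompSubGrps}. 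I would then check $H_{X}\cap\Gamma=\{1\}$: a discrete normal subgroup is central, so $\Gamma\subseteq Z(\widetilde{G})$, and by Proposition \ref{CenterSimpConn} an element $[\tau X,0]$ can lie in $Z(\widetilde{G})$ only if $\tau X\in\ker J(\aleph)$, forcing $\tau=0$ since $X\notin\ker J(\aleph)$. Hence $\pi|_{H_{X}}$ is injective and $\pi(H_{X})\cong H_{X}/(H_{X}\cap\Gamma)=H_{X}$ is a non-compact subgroup of $G$.

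The one genuine obstacle is to see that $\pi(H_{X})$ is \emph{closed} in $G$---the image of an injective Lie group homomorphism need not be closed in general. Since $\pi$ is a continuous, open surjection, $\pi^{-1}\bigl(\overline{\pi(H_{X})}\bigr)=\overline{\pi^{-1}(\pi(H_{X}))}=\overline{H_{X}\Gamma}$, so it is enough to prove $H_{X}\Gamma$ is closed in $\widetilde{G}$. Since elements of $H_{X}$ have vanishing $\C$-component, the group law $[v,t][u,s]=[v+e^{tJ(\aleph)}u,t+s]$ yields $[\tau X,0][u,s]=[\tau X+u,s]$; hence, under the identification $\widetilde{G}\cong\C^{d+1}$, the set $H_{X}\Gamma$ is the subgroup $\C X+\Gamma$ of $(\C^{d+1},+)$ (writing $\Gamma\subseteq\C^{d+1}$ for its image). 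Now $\Gamma\subseteq\ker J(\aleph)\oplus\C$ by Proposition \ref{CenterSimpConn}, whereas $\C X\subseteq\C^{d}\oplus\{0\}$ meets $\ker J(\aleph)\oplus\C$ only in $\{0\}$ because $X\notin\ker J(\aleph)$; therefore the linear projection $q\colon\C^{d+1}\to\C^{d+1}/\C X$ restricts on $\ker J(\aleph)\oplus\C$ to an injective linear map, hence a closed embedding, and so carries the discrete subgroup $\Gamma$ to a discrete, hence closed, subgroup of $\C^{d+1}/\C X$. Consequently $H_{X}\Gamma=\C X+\Gamma=q^{-1}(q(\Gamma))$ is closed, so $\pi(H_{X})$ is a closed non-compact subgroup of the compact group $G$---a contradiction.

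Finally, I would note in passing a much cheaper alternative: $\aA(\aleph)$ is metabelian, so $G$ is a connected solvable Lie group, and every compact connected solvable Lie group is a torus, hence Abelian, contradicting that $G$ is almost Abelian. I would nonetheless present the hands-on argument above, since it stays entirely within the structure theory already developed in this paper.
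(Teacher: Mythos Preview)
Your main argument follows the same route as the paper: pick $X\in\C^{d}\setminus\ker J(\aleph)$, form the one-parameter subgroup $H_{X}=\{[\tau X,0]:\tau\in\C\}$, use centrality of $\Gamma$ together with Proposition~\ref{CenterSimpConn} to get $H_{X}\cap\Gamma=\{1\}$, and conclude that $\pi(H_{X})\cong H_{X}$ is a closed non-compact subgroup of $G$, contradicting compactness.

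The one place where you do more than the paper is the closedness of $\pi(H_{X})$. The paper simply asserts this with the footnote ``since a quotient map is an open map,'' which on its own does not show that the image of a closed set is closed. You instead reduce to proving $H_{X}\Gamma$ is closed in $\widetilde{G}$, identify $H_{X}\Gamma$ with the additive subset $\C X+\Gamma\subseteq\C^{d+1}$ via the homeomorphism $[v,t]\mapsto(v,t)$ (using that elements of $H_{X}$ have trivial $\C$-component, so the group law reduces to addition), and then observe that the projection $q\colon\C^{d+1}\to\C^{d+1}/\C X$ restricts to a closed linear embedding on $\ker J(\aleph)\oplus\C\supseteq\Gamma$, so $q(\Gamma)$ is discrete and $\C X+\Gamma=q^{-1}(q(\Gamma))$ is closed. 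This is a genuine and correct fill-in of a step the paper leaves underargued.

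Your closing alternative---that a compact connected solvable Lie group is a torus, hence Abelian, contradicting the almost Abelian hypothesis---is a legitimately different and much shorter proof; it trades the explicit subgroup construction for an appeal to a standard structure theorem outside the paper's internal toolkit.
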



Prop. \ref{NonCompactConnAAGrp} establishes that connected almost Abelian Lie groups cannot be compact, but it says nothing about the compactness of subgroups of connected almost Abelian Lie groups. Our next result gives a necessary and sufficient condition for the compactness of a subgroup of a connected almost Abelian Lie group.

\begin{proposition} \label{CompCondition}
Let $G$ be a connected almost Abelian group, and $H \subseteq G$ a connected Lie subgroup. Let $\widetilde{H} \subseteq \widetilde{G}$ be the connected Lie  group such that $H = \widetilde{H} / \Gamma$ (where $\Gamma$ is a discrete normal subgroup), and let $\widetilde{G}$ be the simply connected almost Abelian group such that $G = \widetilde{G} / \Gamma$. Then $\rank{(\Gamma \cap \widetilde{H})} = \dim_{\R}(\widetilde{H}) = \dim_{\R}(H)$ if and only if $H$ is compact.
\begin{proof}
First we note that since $\widetilde{H}$ is the universal covering group, the equality $\dim_{\R}(H) = \dim_{\R}(\widetilde{H})$ holds regardless.

By Lemma \ref{OGLem10}, we know there exists $B \subseteq \Gamma$ such that $\Gamma = (\Gamma \cap \widetilde{H}) \times B$. Define $\Gamma' \coloneqq \Gamma \cap \widetilde{H}$. Since $B \cap \widetilde{H} = \{0\}$ by construction, we may write:
\begin{equation*}
    H = \widetilde{H} / \Gamma = \widetilde{H} / (\Gamma' \times B) = \widetilde{H} / \Gamma'
\end{equation*}
Thus we have obtained a discrete subgroup $\Gamma'$ contained in $\widetilde{H}$ yielding the same (isomorphic) quotient as when viewing $\widetilde{H}$ as a subgroup of $\widetilde{G}$ and taking the quotient by $\Gamma$.

Since $H = \widetilde{H} / \Gamma'$, $H$ is connected. By Prop. 4 in \cite{Ave16}, all subalgebras of an almost Abelian Lie algebra are either Abelian or almost abelian. It is known that there is a 1-1 correspondence between connected subgroups and Lie subalgebras, implying that any underlying group is either almost Abelian (by definition) or is Abelian (having an Abelian Lie algebra implies the connected component of the identity is abelian for real Lie groups, and complex Lie groups are in particular real Lie groups). We analyze the two cases:

\noindent
\textbf{Case 1:} First we will consider the case where $H$ is almost Abelian.

For the forwards implication, assume for contradiction that $\rank{(\Gamma')} = \dim_{\R}(\widetilde{H}) = \dim_{\R}(H)$ (i.e., we will show the claim is vacuous in this case). Now $\Gamma' \subseteq Z(\widetilde{H}) \cong \C^\ell \cong \R^{2\ell}$ for some $\ell \in \N$, $\ell < d$. Let $k = 2 \ell + 2 = \dim_{\R}(\widetilde{H})$. Thus we have that there exists a minimal generating set $\{[v_1, t_1], \dots, [v_k, t_k]\}$. 

By Lemma \ref{TalephTrivCond}, we have that either $T_{\aleph} \cong \{0\}$ or $T_{\aleph} \cong \Z$. In either case, $\rank{T_{\aleph}} \leq 1$. And thus in either case, there exists some $[u_1, t_0] \in \Gamma'$ such with minimal (in magnitude) $t_0 \in \C$ such that $t_j = n_j t_0,\ 1 \leq j \leq k,\ \{n_j\}_{j=1}^k \subseteq \N$. Then by Lemma 7 in \cite{AAB20}, or equivalently by row operations, we have that there is a generating set $\{[u_1, t_0], [u_2, 0], \dots, [u_k, 0] \} \subseteq \Gamma'$. By presupposition, $\{[v_j, t_j\}_{j=1}^{k}$ was a minimal generating set for $\Gamma'$. Now note that $\abs{\{u_j\}_{j=2}^k} = 2 \ell + 1 > 2 \ell$, while also $\{[u_j, 0]\}_{j=2}^k \subseteq \R^{2\ell}$. Thus there must be at least two $u_j$'s that are $\R$-linearly dependent. But then we may find a minimal generating set of $\Gamma'$ of cardinality $< k$, a contradiction to $\rank{\Gamma'} = \dim_{\R}(\widetilde{H}) = k$. Thus it is impossible for $\rank(\Gamma') = \dim_{\R}(\widetilde{H})$, and thus this direction of implication holds vacuously.

Now for the reverse implication. Since $H$ is almost Abelian, we have that $H$ is a connected almost Abelian Lie group, and therefore by Prop. \ref{NonCompactConnAAGrp}, $H$ cannot be compact, and thus the reverse implication is vacuous as well, and so we are done with this case.
\newline

\noindent
\textbf{Case 2:} We proceed to check the Abelian case. Note that $H$ is Abelian if and only if $\widetilde{H}$ is Abelian because they have the same Lie algebra. If $\widetilde{H}$ is Abelian, then $\widetilde{H} \cong \C^n$ by Prop. 5 in \ref{ConnSubGrpForms}. By assumption, $\Gamma'$ is a discrete subgroup, so it is generated by $k \leq 2n$ $\R$-linearly independent elements. 
Therefore there exists an isomorphism $\varphi: \Gamma' \to \Z^k$. So we have:
\begin{equation} \label{HCongTo}
    H \cong \widetilde{H} / \Gamma' \cong  \C^n / \varphi(\Gamma') \cong \C^n / \Z^k \cong \T^{\left(2\floor{\frac{k}{2}} \right)} \times (\C / \Z )^{\epsilon} \times \C^\eta  
\end{equation}
where
\begin{equation*}
    \epsilon = \begin{cases}
        0 & k \equiv 0 \mod 2 \\
        1 & k \equiv 1 \mod 2 
    \end{cases} \quad \quad
    \eta = \begin{cases}
    n - \frac{k}{2} & k \equiv 0 \mod 2 \\
    n - \floor{\frac{k}{2}}  - 1 & k \equiv 1 \mod 2.
    \end{cases}
\end{equation*}

For the forwards direction, assume $\rank{(\Gamma')} = \dim(\widetilde{H})$. Then by \eqref{HCongTo}, we would have $H \cong \T^{\floor{\frac{k}{2}}} = \T^n$, which is compact.

Now for the reverse implication, assume instead that $H$ is compact. Then it is apparent from \eqref{HCongTo} that the only way for this to occur is for $\epsilon = \eta = 0$ and so $n - \frac{k}{2} = 0$, implying $k = 2n$ and $\rank{(\Gamma')} = \dim_{\R}(\widetilde{H})$, as desired.
\end{proof}
\end{proposition}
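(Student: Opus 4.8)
The plan is to first eliminate the ambient group $\widetilde{G}$ and work entirely inside $\widetilde{H}$. Applying Lemma \ref{OGLem10} to $\widetilde{H}\subseteq\widetilde{G}$, write $\Gamma=\Gamma'\times B$ with $\Gamma':=\Gamma\cap\widetilde{H}$ and $B\cap\widetilde{H}=\{1\}$, so that $H=\widetilde{H}/\Gamma=\widetilde{H}/\Gamma'$. Here $\widetilde{H}$ is itself a simply connected complex Lie group: by Proposition \ref{ConnSubGrpForms} it is one of the model subgroups $\exp(\mathbf{W})$, $\exp(\mathbf{W})\times\C$, or $\exp(\mathbf{W})\rtimes\C$, each diffeomorphic to a complex affine space and hence simply connected. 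Moreover $\Gamma'$ is a discrete normal (therefore central) subgroup of $\widetilde{H}$, and $\dim_{\R}H=\dim_{\R}\widetilde{H}$ holds unconditionally since $\widetilde{H}\to H$ is a covering map. Since $\widetilde{H}$ is a connected subgroup of $\widetilde{G}$, its Lie algebra is, by Remark \ref{SubalgebraForms} (Prop. 4 of \cite{Ave16}), either Abelian or almost Abelian, and $\widetilde{H}$ is Abelian iff $H$ is (same Lie algebra). I would treat these two cases in turn.

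\emph{The almost Abelian case.} Here both implications are vacuously true. For the backward direction, $H=\widetilde{H}/\Gamma'$ is then a connected almost Abelian Lie group, so Proposition \ref{NonCompactConnAAGrp} shows $H$ is not compact. For the forward direction I would show the hypothesis $\operatorname{rank}(\Gamma')=\dim_{\R}(\widetilde{H})$ can never hold. Write $m=\dim_{\C}\widetilde{H}$ and let $\aA(\aleph')$ be the Lie algebra of $\widetilde{H}$, with $\ell=\dim_{\C}\ker J(\aleph')$; since $\widetilde{H}$ is non-Abelian, $J(\aleph')\neq 0$, so $\ell\leq m-2$. By Proposition \ref{CenterSimpConn} applied to $\widetilde{H}$, together with the homeomorphism $f\colon[v,t]\mapsto(v,t)$ that restricts to a group isomorphism on $Z(\widetilde{H})$ (as in the proof of Proposition \ref{DiscSubgrpDim}), $f(\Gamma')$ is a discrete subgroup of $\C^{m}$ contained in $\ker J(\aleph')\times T_{\aleph'}$. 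Since $\widetilde{H}$ is almost Abelian, Lemma \ref{TalephTrivCond} forces $\operatorname{rank}T_{\aleph'}\leq 1$, so $\ker J(\aleph')\times T_{\aleph'}$ sits inside a real subspace of dimension $\leq 2\ell+1$; by Lemma \ref{DiscGrpVS} the $\R$-linearly independent generators of $f(\Gamma')$ all lie there, whence $\operatorname{rank}(\Gamma')\leq 2\ell+1\leq 2m-3<2m=\dim_{\R}(\widetilde{H})$. So the antecedent is never satisfied.

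\emph{The Abelian case.} Now $\widetilde{H}$ is a connected Abelian subgroup of $\widetilde{G}$, hence by Proposition \ref{ConnSubGrpForms} a complex vector group, $\widetilde{H}\cong(\C^{n},+)$ with $n=\dim_{\C}\widetilde{H}$. By Lemma \ref{DiscGrpVS}, $\Gamma'\cong\Z^{k}$ is generated by $k\leq 2n$ vectors in $\C^{n}\cong\R^{2n}$ that are $\R$-linearly independent; completing them to an $\R$-basis identifies
\[
H\;\cong\;\C^{n}/\Gamma'\;\cong\;\T^{\,k}\times\R^{\,2n-k}.
\]
This is compact if and only if $2n-k=0$, i.e.\ $k=2n=\dim_{\R}(\widetilde{H})$, i.e.\ $\operatorname{rank}(\Gamma')=\dim_{\R}(\widetilde{H})=\dim_{\R}(H)$. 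Combining this with the previous case yields the equivalence.

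I expect the only genuine obstacle to be the forward implication in the almost Abelian case, namely the verification that a discrete central subgroup of $\widetilde{H}$ is too small to make $H$ compact. The essential inputs there are the description of $Z(\widetilde{H})$ from Proposition \ref{CenterSimpConn} and the crucial bound $\operatorname{rank}T_{\aleph'}\leq 1$ from Lemma \ref{TalephTrivCond}, which together confine $f(\Gamma')$ to a real subspace of dimension $2\ell+1$ strictly below $\dim_{\R}\widetilde{H}$; an alternative is to run the integral row-reduction of Lemma 7 in \cite{AAB20} to split off a single generator carrying the $t$-direction and observe that the remaining generators cannot stay $\R$-independent inside $\ker J(\aleph')$. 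Everything else — the reduction to $\widetilde{H}/\Gamma'$ via Lemma \ref{OGLem10}, the appeal to Proposition \ref{NonCompactConnAAGrp}, and the lattice normal form in the Abelian case — is routine.
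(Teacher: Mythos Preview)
Your proposal is correct and follows essentially the same approach as the paper: reduce to $H=\widetilde{H}/\Gamma'$ via Lemma~\ref{OGLem10}, split into the almost Abelian and Abelian cases, show both implications are vacuous in the former (using Proposition~\ref{NonCompactConnAAGrp} and a rank bound on $\Gamma'$ coming from $\operatorname{rank}T_{\aleph'}\leq 1$), and handle the latter by the standard lattice quotient $\C^n/\Z^k\cong\T^k\times\R^{2n-k}$. Your execution is in fact a bit cleaner than the paper's---you get the rank obstruction in the almost Abelian case by a direct dimension count $\operatorname{rank}(\Gamma')\leq 2\ell+1<2m$ rather than via the explicit row-reduction the paper carries out, and your torus decomposition in the Abelian case avoids the paper's unnecessary even/odd split---but these are cosmetic differences, not a different route.
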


While the necessary and sufficient condition described by Prop. \ref{CompCondition} appears unwieldy, the condition can be a powerful technical tool in the proofs of more concrete results, such as Prop. \ref{MaxCompSubgrp} later on.
\section{Discrete Subgroups}
To study discrete subgroups, it is useful to study their images under a certain map which we will call the \textit{projection homomorphism}. In the next lemma, we give the definition and check that the map is indeed a well-defined homomorphism.

\begin{lemma} \label{ProjIsHom}
Let $G$ be an $n$-dimensional simply connected almost Abelian Lie group with Lie algebra $\aA(\aleph)$. Recalling that $G = \C^d \rtimes \C \eqqcolon N \rtimes H$, define $P:G\to\mathbb{C}$ by
\begin{equation*}
    P([v,t]) \coloneqq t.
\end{equation*}
$P$ is a group homomorphism.

\begin{proof}
Observe that
\begin{equation*}
    [v,0][0,t] = [v + e^0 0, t] = [v,t].
\end{equation*}
Thus, we may represent $[v,t]$ as $nh$ for some $n \in N$ and $h \in H$. Note that by the definition of the semidirect product, $N \unlhd \widetilde{G}$. Then for $n_1, n_2 \in N$ and $h_1, h_2 \in H$, we have
\begin{align*}
    P[n_1 h_1 n_2 h_2] &= P[n_1 (h_1 n_2 h_1\inv) h_1 h_2] \\
    &= P[n_3 h_1 h_2] \\
    &= \pi_2(h_1 h_2) \\
    &= P[n_1 h_1] P[n_2 h_2].
\end{align*}
where $\pi_2$ is the projection onto the second factor.
\end{proof}
\end{lemma}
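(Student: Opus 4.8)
The plan is to reduce everything to the explicit multiplication rule for $G$ that was already established in the course of proving Proposition \ref{CenterSimpConn}. Recall that, in the bracket notation $[v,t]$ for elements of the faithful representation from Proposition \ref{SimpConnRep}, group multiplication reads
\begin{equation*}
    [v,t][u,s] = [v + e^{tJ(\aleph)}u,\, t+s],
\end{equation*}
which is equation \eqref{multiplicationOfAAGpElsEq}. Before invoking it, I would first record that $P$ is well-defined: by Proposition \ref{SimpConnRep} the assignment $\varphi\colon (v,t)\mapsto [v,t]$ is a bijection onto $G$, so each group element determines the pair $(v,t)$, and in particular the scalar $t$, uniquely. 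Granting this, the homomorphism property is immediate from the multiplication rule, since $P$ simply reads off the last coordinate and that coordinate adds:
\begin{equation*}
    P\big([v,t][u,s]\big) = P\big([v + e^{tJ(\aleph)}u,\, t+s]\big) = t+s = P([v,t]) + P([u,s]),
\end{equation*}
where $\mathbb{C}$ is viewed as a group under addition.

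Alternatively — and this is the route the phrasing $G = N\rtimes H$ in the statement seems to invite — one can argue structurally, which is essentially the same computation dressed up. Write $G = N\rtimes H$ with $N = \C^d$ and $H = \C$, where $N$ is normal by the semidirect-product structure. From $[v,0][0,t] = [v + e^{0}\cdot 0,\, t] = [v,t]$ we see every element factors uniquely as $nh$ with $n = [v,0]\in N$ and $h = [0,t]\in H$. Given a product $n_1 h_1 n_2 h_2$, normality of $N$ lets us replace $h_1 n_2 h_1^{-1}$ by some $n_3 \in N$, so $n_1 h_1 n_2 h_2 = n_3 h_1 h_2$; applying $P$, which by construction discards the $N$-component and returns the $H$-component (identified with $\C$), gives $P(n_1 h_1 n_2 h_2) = h_1 h_2 = P(n_1 h_1)P(n_2 h_2)$. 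In other words, $P$ is nothing but the quotient map $G\to G/N \cong H$.

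There is no genuine obstacle in this lemma: the entire content is the multiplication formula \eqref{multiplicationOfAAGpElsEq}, already proved, together with uniqueness of the bracket representation. The only point deserving an explicit word is the well-definedness of $P$, and that is handled by the injectivity of $\varphi$ from Proposition \ref{SimpConnRep}. I would therefore keep the write-up short, present the one-line verification via \eqref{multiplicationOfAAGpElsEq}, and optionally remark on the semidirect-product interpretation for context.
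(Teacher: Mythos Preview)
Your proposal is correct, and your second (structural) argument is essentially the paper's proof verbatim: factor each element as $nh$, use normality of $N$ to rewrite $n_1h_1n_2h_2 = n_3 h_1 h_2$, and read off the $H$-component. Your first approach---the one-line verification via the multiplication formula \eqref{multiplicationOfAAGpElsEq}---is a genuinely shorter route that the paper does not take; it avoids the $nh$ factorization and conjugation step entirely, at the cost of hiding the general semidirect-product picture. Either is fine here, and your remark on well-definedness via the injectivity of $\varphi$ is a small improvement over the paper, which leaves that implicit.
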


The utility of this projection homomorphism is shown in the next result. In particular, we show that the finite generation of $D$ is equivalent to that of its image under the projection homomorphism.

\begin{proposition} \label{FinGenCond}
Let $G = \C^d \rtimes \C \eqqcolon N \rtimes H$ be a simply connected almost Abelian group, and $D \subseteq G$ a discrete subgroup. Then $D$ is finitely generated if and only if $P(D) \subseteq H$ is finitely generated.
\begin{proof} We identify $N$ with an internal subgroup of $G$.

In one direction, if $D$ is finitely generated, then so is $P(D)$ because $P$ is a homomorphism by Lemma \ref{ProjIsHom}. 

Conversely, suppose $P(D) \subseteq H$ is finitely generated. So there exists $k \in \N$ and $\{\alpha_i\}_{i=1}^k \subseteq D$ such that $\Z\{P(\alpha_i) : 1 \leq i \leq k\} = P(D)$. Let $x \in D$ be arbitrary. Then we know $P(x) = P(\alpha)$ for some $\alpha \in \Z\{\alpha_i\}_{i=1}^k$. Then $x\alpha\inv \in \ker(P) = N$. Therefore $x\alpha\inv \in D \cap N$ so, 
\begin{equation} \label{DecompOfD}
  D = (D \cap N) (\Z\{\alpha_i\}_{i=1}^k)
\end{equation}
Now $N = \C^d \cong \R^{2d}$ hence $D \cap N$ is an additive subgroup of $\R^{2d}$, so it is finitely generated. Thus by \eqref{DecompOfD} we know $D$ is also finitely generated.
\end{proof}
\end{proposition}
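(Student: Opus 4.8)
The plan is to prove both implications of the biconditional; the forward direction is immediate, and the converse carries the content. For the forward direction, if $D = \langle g_1, \dots, g_m\rangle$ is finitely generated, then applying the homomorphism $P$ from Lemma \ref{ProjIsHom} shows $P(D) = \langle P(g_1), \dots, P(g_m)\rangle$, since the image of a finitely generated group under a homomorphism is finitely generated.

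For the converse, the key structural fact is that $\ker P = N = \C^d$, so that $D$ is an extension of the finitely generated group $P(D)$ by the subgroup $D \cap N$; one then invokes the general principle that an extension of a finitely generated group by a finitely generated group is itself finitely generated, after checking that $D\cap N$ is finitely generated. Concretely, I would first pick $\alpha_1, \dots, \alpha_k \in D$ whose images $P(\alpha_1), \dots, P(\alpha_k)$ generate the abelian group $P(D) \subseteq H = \C$. Given an arbitrary $x \in D$, write $P(x) = \sum_{i} m_i P(\alpha_i)$ with $m_i \in \Z$, and set $\alpha \coloneqq \alpha_1^{m_1}\cdots\alpha_k^{m_k} \in \langle \alpha_1, \dots, \alpha_k\rangle$; since the codomain $\C$ of $P$ is abelian, $P(\alpha) = \sum_i m_i P(\alpha_i) = P(x)$ independently of how the product is arranged. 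Hence $x\alpha^{-1} \in \ker P = N$, and since both $x$ and $\alpha$ lie in $D$ we get $x\alpha^{-1} \in D \cap N$, so $x = (x\alpha^{-1})\alpha \in (D\cap N)\,\langle\alpha_1,\dots,\alpha_k\rangle$. This establishes $D = (D \cap N)\,\langle\alpha_1, \dots, \alpha_k\rangle$.

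It then remains to see that $D \cap N$ is finitely generated. Since $D$ is discrete in $G$ and $N = \C^d \times \{0\}$ is a closed subgroup, $D \cap N$ is a discrete subgroup of $N \cong \C^d$, which is a finite-dimensional inner product space over $\C$; by Lemma \ref{DiscGrpVS} it is free abelian of finite rank (at most $2d$), hence generated by finitely many elements $\beta_1, \dots, \beta_r$. Combining this with the decomposition of the previous paragraph, $D$ is generated by the finite set $\{\alpha_1, \dots, \alpha_k, \beta_1, \dots, \beta_r\}$, completing the proof.

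I do not anticipate a genuine obstacle: the argument is a standard ``extension of finitely generated by finitely generated'' fact specialized to the homomorphism $P$. The only points meriting care are confirming that $D \cap N$ is actually discrete --- which holds because it is a subgroup of the discrete group $D$ --- and the mild bookkeeping that lets us realize an arbitrary element of $P(D)$ as the $P$-image of an honest element of $\langle\alpha_1,\dots,\alpha_k\rangle$, which works precisely because the target $H = \C$ of $P$ is abelian.
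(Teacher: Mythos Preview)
Your proof is correct and follows essentially the same route as the paper: both directions are handled identically, via lifting generators of $P(D)$ to $D$ and decomposing $D$ as $(D\cap N)\langle\alpha_1,\dots,\alpha_k\rangle$. If anything, your justification that $D\cap N$ is finitely generated is slightly more careful than the paper's, since you explicitly invoke discreteness and Lemma~\ref{DiscGrpVS} rather than just saying ``additive subgroup of $\R^{2d}$.''
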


When the projection of a discrete subgroup is discrete, it is finitely generated, so the discrete subgroup itself itself must be finitely generated by Prop. \ref{FinGenCond}. The following lemma tells us what can be ascertained if the projection of a discrete subgroup fails to be discrete.
\begin{lemma} \label{NcapDInZ}
Let $G = N \rtimes H = \C^d \rtimes \C$ be a simply connected almost Abelian group, and let $D \subseteq G$ be a discrete subgroup. If $P(D)$ is not discrete, then $N \cap D \subseteq (Z(G))_0$.

\begin{proof} We identify $N$ and $H$ with internal subgroups of $G$.

Note that $[v,t][u,s] = [v + e^{tJ(\aleph)}u, t+s]$. Thus, if we can show that $J(\aleph) = 0$, we will have shown $N \cap D \subseteq (Z(\widetilde{G}))$.

Recall that
\begin{equation} \label{adAsDer}
    \ad_X = \dv{}{\tau} \Ad_{e^{\tau X}} \biggr|_{\tau = 0}.
\end{equation}
In \eqref{adAsDer}, let $X = e_0$ (where $H = \exp_{G}(\C e_0)$). So we have
\begin{equation} \label{ade_0AsDerV1}
    \ad_{e_0} = \dv{}{\tau} \Ad_{e^{\tau e_0}} \biggr|_{\tau = 0}.
\end{equation}
Also recall from Remark \ref{ExpIsId} that $\exp_{G}|_{\ker(J(\aleph)) \times \C}((v,t)) = [v,t]$. Thus we may rewrite \eqref{ade_0AsDerV1} as:
\begin{equation} \label{ade_0asDerV2}
    \ad_{e_0} = \dv{}{\tau} \Ad_{\tau} \biggr|_{\tau = 0}.
\end{equation}
using the fact that the matrix exponential and the exponential associated with our Lie group coincide when we use the Lie algebra representation associated with $G$, as in Prop. \ref{SimpConnRep}.

We note that
\begin{equation} \label{ExpOf(v,0)}
    \exp_{G}((v,0)) = I + \sum_{n=1}^\infty \frac{1}{n!}
    \begin{pmatrix}
        0 & 0 & 0 \\
        v & 0 & 0 \\
        0 & 0 & 0
    \end{pmatrix}^n
    = 
    \begin{pmatrix}
        1 & 0 & 0 \\
        v & 1 & 0 \\
        0 & 0 & 1
    \end{pmatrix}
    = [v,0].
\end{equation}
Thus for all $[n,0] \in N$, we have that there exists $(n,0) \in \text{Lie}(\widetilde{G}) \eqqcolon \mathfrak{g}$ such that $\exp_{\widetilde{G}}((n,0)) = [n,0]$. So we may identify $D \cap N$ with a subset $\mathfrak{d} \coloneqq \{(d,0) : [d,0] \in D \cap N \} \subseteq \mathfrak{g}$. Note that $\mathfrak{d}$ is also discrete.

Recall that the continuous action of a connected group on a discrete set is trivial. Now, $H$ is a connected group, and we can consider the action of conjugation by elements of $H$ on the discrete set $\mathfrak{d}$. Denote this action by $C: H \times \mathfrak{d} \to \mathfrak{d}$. Conjugation is a continuous action, thus the action $C$ must be trivial. That is, $C_h \equiv \id_{\mathfrak{d}}$ for every $h \in H$. In particular, the action $C |_{P(D) \times \mathfrak{d}}$ is trivial.

Because we are working in a Hausdorff space, the limit in \eqref{ade_0asDerV2} can be computed using any sequence $(t_n)_{n=1}^{\infty}$ with $t_n\to0$. Assume $P(D)$ is not discrete. Then there is a sequence of $t_n \in P(D)$ such that $t_n \to 0$. Now $\Ad_{\tau}$ is just conjugation by $\tau \in H$, which we know to be trivial when acting on $\mathfrak{d}$. Since there exists our sequence $(t_n)_{n=1}^\infty$ that is contained in $P(D)$ by construction, we have that
\begin{equation*}
     \ad_{e_0} |_{\mathfrak{d}} = \dv{}{\tau} \Ad_{\tau} \big|_{\mathfrak{d}}  \biggr|_{\tau = 0} = \lim_{t_n \to 0} \frac{\Ad_{t_n}|_{\mathfrak{d}} - \Ad_{0}|_{\mathfrak{d}}}{t_n - 0} = \lim_{t_n \to 0} \frac{I - I}{t_n} = 0.
\end{equation*}
Because $\ad_{e_0}|_{\mathfrak{d}} = J(\aleph)|_{\mathfrak{d}}$, we have that $J(\aleph)|_{\mathfrak{d}} = 0$. Since $[v,t][u,s] = [v + e^{tJ(\aleph)}u, t + s] \foreach [v,t],[u,s] \in G$, we can conlude that $D \cap N \subseteq Z(G)$.

Now that we know $\ad_{e_0} |_{\mathfrak{d}} = 0$, we calculate that for all $(d,0) \in \mathfrak{d}$ and for arbitrary $(v,t) \in \mathfrak{g}$,
\begin{align*}
    [(d,0), (v,t)] &= [(d,0), (v,0)] + [(d,0), (0,t)] \\
        &= 0 - t \ad_{e_0}((d,0)) \\
        &= 0.
\end{align*}
Thus $\mathfrak{d} \subseteq Z(\aA(\aleph))$. So\footnote{E.g. by exercise 9.1 in \cite{harris1991representation}}, we have that $\exp_{G}(\mathfrak{d}) \subseteq Z(G)_0$. As $\exp_{G}(\mathfrak{d}) = D \cap N$ by construction, we conclude $D \cap N \subseteq Z(G)_0$.
\end{proof}
\end{lemma}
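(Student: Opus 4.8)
The plan is to exploit the semidirect-product structure $G = N \rtimes H$ with $N = \C^d$ abelian and $H = \C$: the intersection $D \cap N$ is a discrete subgroup of $\C^d$, conjugation by elements of $D$ acts on it only through $P(D)$, and the assumption that $P(D)$ is not discrete will pin this action down to the identity near $0$, forcing $D \cap N$ into $\ker J(\aleph)$.

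First I would record the relevant conjugation identity. From the multiplication rule $[v,t][u,s] = [v + e^{tJ(\aleph)}u,\, t+s]$ of Prop.~\ref{CenterSimpConn} one computes $[v,t]\,[u,0]\,[v,t]^{-1} = [e^{tJ(\aleph)}u,\, 0]$, which depends only on $t = P([v,t])$. Since $N$ is normal in $G$, the subgroup $D \cap N$ is normal in $D$, so for every $[v,t] \in D$ and $[u,0] \in D \cap N$ the element $[e^{tJ(\aleph)}u, 0]$ again lies in $D \cap N$; as $P([v,t]^{-1}) = -t$ lies in $P(D)$ too, the $\C$-linear map $e^{tJ(\aleph)}$ carries the set $D \cap N$ bijectively onto itself for every $t \in P(D)$.

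Next, by Lemma~\ref{DiscGrpVS} write $D \cap N = \Z u_1 \oplus \cdots \oplus \Z u_k$ with $u_1,\dots,u_k \in \C^d$ that are $\R$-linearly independent, and set $W = \mathrm{span}_{\R}\{u_1,\dots,u_k\}$. For $t \in P(D)$, since $e^{tJ(\aleph)}$ maps $D \cap N$ onto itself it preserves $W$ and acts on $W$, in the basis $(u_i)$, as an element of the \emph{discrete} group $\mathrm{GL}_k(\Z)$; this gives a map $P(D) \to \mathrm{GL}_k(\Z)$ which is the restriction of the continuous map $t \mapsto e^{tJ(\aleph)}$. Now $P(D)$ is a non-discrete subgroup of $(\C,+)$, so it accumulates at $0$ and there is a sequence $t_n \in P(D) \setminus \{0\}$ with $t_n \to 0$; then $e^{t_n J(\aleph)} \to \mathbbm{1}$, so the associated matrices in $\mathrm{GL}_k(\Z)$ converge to $\mathbbm{1}_k$ and are therefore eventually equal to $\mathbbm{1}_k$, that is, $e^{t_n J(\aleph)} u_i = u_i$ for all $i$ and all sufficiently large $n$. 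Since $(e^{t_n J(\aleph)} - \mathbbm{1})u_i = 0$ for such $n$, dividing by $t_n$ and letting $n \to \infty$ gives $J(\aleph)u_i = 0$ for each $i$. Hence $D \cap N \subseteq \{[u,0] : u \in \ker J(\aleph)\}$; since $\ker J(\aleph) = Z(\aA(\aleph))$ and, by Remark~\ref{ExpIsId}, $\{[u,0] : u \in \ker J(\aleph)\} = \exp_G(Z(\aA(\aleph)))$, Lemma~\ref{ExpMapToCenter} puts this set inside $Z(G)$, and being the continuous image of a connected set through the identity it lies in $Z(G)_0$. Thus $D \cap N \subseteq Z(G)_0$, as claimed.

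The step I expect to require the most care is the passage from ``$e^{tJ(\aleph)}$ preserves the discrete set $D \cap N$ for $t \in P(D)$'' to ``$e^{t_n J(\aleph)}$ fixes $D \cap N$ pointwise for $n$ large.'' It is tempting here to invoke the principle that a connected topological group acting continuously on a discrete set acts trivially, but $P(D)$ itself need not be connected (it may be a dense, totally disconnected subgroup of $\C$), and $H = \C$ does not act on $D \cap N$ at all, since $e^{tJ(\aleph)}$ typically fails to preserve $D \cap N$ when $t \notin P(D)$. The correct mechanism is instead that the action on the free abelian group $D \cap N$ is valued in the \emph{discrete} group $\mathrm{GL}_k(\Z)$, so continuity together with the accumulation of $P(D)$ at $0$ forces this action to be trivial in a neighbourhood of the identity; the final difference-quotient computation is then routine.
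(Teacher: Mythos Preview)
Your proof is correct and follows the same overall strategy as the paper: show that conjugation by elements with second coordinate in $P(D)$ fixes $D \cap N$ for parameters near $0$, then pass to a difference quotient to conclude that $J(\aleph)$ annihilates $D \cap N$, placing it inside $\ker J(\aleph) = Z(\aA(\aleph))$ and hence in $Z(G)_0$. The genuine difference lies exactly where you anticipated. To justify that the conjugation action is trivial for small $t_n \in P(D)$, the paper invokes the principle that a \emph{connected} group acting continuously on a discrete set acts trivially, applied to $H = \C$ acting on $\mathfrak{d} \cong D\cap N$ --- precisely the move you flag as suspect, since $H$ does not preserve $D \cap N$ (only $P(D)$ does). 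Your route via the lattice structure --- the action of each $t \in P(D)$ on $D\cap N = \bigoplus \Z u_i$ is represented by a matrix in the discrete group $\mathrm{GL}_k(\Z)$, so continuity of $t \mapsto e^{tJ(\aleph)}$ forces these matrices to equal $\mathbbm{1}_k$ along any sequence $t_n \to 0$ in $P(D)$ --- supplies an honest justification for that step and is the cleaner argument.
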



\begin{remark} \label{discreteKerSet}
For a finitely supported multiplicity function $\aleph$, we define
\begin{equation*}
    \mathfrak{k} \coloneqq \left\{t \in \C \; \middle| \; \ker\left( \frac{e^{tJ(\aleph)} - \mathbbm{1}}{tJ(\aleph)}\right) \neq \{0\} \right\}.
\end{equation*}
Then,
\begin{equation} \label{SecondSet}
\mathfrak{k}=\left\{ \frac{2\pi  i  m}{x_p} \middle| m \in \Z,\ p \in \supp{\aleph} \cap (\C - \{0\}) \right\}.
\end{equation}
\end{remark}

\begin{proof} We abbreviate the set on the right hand side of \ref{SecondSet} as $S$.

In one direction, let $t \in \C$ such that $\ker\left( \frac{e^{tJ(\aleph)} - \mathbbm{1}}{tJ(\aleph)}\right) \neq \{0\}$. By rank-nullity, $ \det [\frac{e^{tJ(p,n)} - \mathbbm{1}}{tJ(p,n)}] = 0$. Now, because $[\frac{e^{tJ(p,n)} - \mathbbm{1}}{tJ(p,n)}]$ is an upper triangular matrix, $ \det [\frac{e^{tJ(p,n)} - \mathbbm{1}}{tJ(p,n)}] =  \left( \frac{e^{tx_p}-1}{tx_p}\right)^n$, so $ \left( \frac{e^{tx_p}-1}{tx_p}\right)^n = 0$. Simplifying, $ \left( \frac{e^{tx_p}-1}{tx_p}\right) = 0$, $e^{tx_p}-1 = 0$. If $t=0$, we can think of $\frac{e^{tx_p}-1}{tx_p}$ as $ \lim_{z\to 0} \frac{e^z-1}{z} $ which equals $1$, so  $ \det [\frac{e^{tJ(p,n)} - \mathbbm{1}}{tJ(p,n)}] \neq 0$. This is a contradiction, so $t \neq 0$. Thus, since $t\neq 0$ we get that $e^{tx_p} - 1 = 0 $, so $e^{tx_p} = 1$. Thus $tx_p \in 2 \pi  i  \Z$ and as $tx_p \neq 0$, $x_p \neq 0$ so we can divide and obtain $t \in \frac{2 \pi  i  \Z}{x_p}$. Hence, $\mathfrak{k}\subseteq S$.

In the other direction, let $t \in S$. Then $tx_p \in 2 \pi  i  \Z$ so $e^{tx_p} = 1$ and thus  $e^{tx_p} -1 = 0$. This means $\frac{e^{tx_p}-1}{tx_p} = 0$ so $\det(J(p,n)) = \left( \frac{e^{tx_p}-1}{tx_p}\right)^n = 0$. 
So by rank-nullity,
\begin{equation*}
    \ker\left( \frac{e^{tJ(p,n)} - \mathbbm{1}}{tJ(p,n)} \right) \neq \{0\}.
\end{equation*}
Note that 
\begin{equation*}
\mathfrak{k}=\bigcup_{p \in \supp{\aleph}} \bigcup_{n=1}^\infty \left\{t \in \C \middle| \ker\left( \frac{e^{tJ(p,n)} - \mathbbm{1}}{tJ(p,n)}\right) \neq \{0\} \right\}
\end{equation*}
Therefore, $t \in \mathfrak{k}$. Hence, $S\subseteq\mathfrak{k}$.
\end{proof}

We now come to one of our main results---that every discrete subgroup of a simply connected almost Abelian group is finitely generated. This refines Lemma \ref{NcapDInZ} and tells us that even when the projection of a discrete subgroup fails to be discrete, it is still at least finitely generated.

\begin{theorem} \label{AllDiscSubgrpsOfSimpConnGrpAreFinGen}
Let $G$ be a simply connected almost Abelian group. Every discrete subgroup $D \subseteq G$ is finitely generated.
\begin{proof}
If $P(D) \subseteq \C$ is discrete then it is finitely generated, and so by Proposition \ref{FinGenCond}, $D$ is finitely generated and we are done. So assume $P(D)$ is not discrete.
\newline

\noindent
\textbf{Case 1:} Suppose $P(D) \subseteq \mathfrak{k}$. Consider the group
\begin{equation*}
    H \coloneqq \left\langle \left\{ \frac{2 \pi  i }{x_p} \, \middle| \, p \in \supp{\aleph} \cap (\C - \{0\}) \right\} \right\rangle.
\end{equation*}
Since $J(\aleph)$ is a finite multiplicity function, $\abs{\left\{ x_p \, : \, p \in \supp{\aleph} \cap (\C - \{0\}) \right\}} \in \N$. Thus $H$ is finitely generated. Note that $H$ is Abelian. By assumption and construction:
\begin{equation*}
    P(D) \subseteq \mathfrak{k} \subseteq H.
\end{equation*}
Since all subgroups of a finitely generated Abelian group are finitely generated, it follows that $P(D)$ is finitely generated. By Prop. \ref{FinGenCond}, we are done.
\newline

\noindent
\textbf{Case 2:} Suppose $P(D) \not \subseteq \mathfrak{k}$. Then we have that there exists $t_0 \in P(D) \cap \mathfrak{k}^c$. Since $t_0 \in \mathfrak{k}^c$, we have by definition that
\begin{equation} \label{kerIsTriv}
    \ker\left( \frac{e^{t_0 J(\aleph)} - \mathbbm{1}}{t_0 J(\aleph)}\right) = \{0\}.
\end{equation}
Now if $v \in \ker(J(\aleph))$, then of course $v \in \ker(e^{t_0 J(\aleph)} - \mathbbm{1})$. For the reverse inclusion, assume that $v \in \ker(e^{t_0 J(\aleph)} - \mathbbm{1})$ and that $t_0 \in \mathfrak{k}^c$. Then, recalling $t_0 \neq 0$ by construction,
\begin{equation*}
    \left(\sum_{k=1}^\infty \frac{1}{n!} t_0^n J(\aleph)^{n-1} \right)v = 0,
\end{equation*}
if and only if
\begin{equation*}
    J(\aleph) \left(\sum_{k=1}^\infty \frac{1}{n!} t_0^n J(\aleph)^{n-1} \right)v = 0,
\end{equation*}
if and only if
\begin{equation*}
    t_0\left(\sum_{k=1}^\infty \frac{1}{n!} t_0^{n-1} J(\aleph)^{n-1} \right)J(\aleph)v = 0,
\end{equation*}
and thus $v \in \ker(J(\aleph))$. Thus $[v_0, t_0]$ also satisfies
\begin{equation} \label{kerEquality}
    \ker\left( e^{t_0 J(\aleph)} - \mathbbm{1} \right) = \ker(J(\aleph)).
\end{equation}

Because of \eqref{kerIsTriv} and the rank-nullity theorem, we have that $\frac{e^{t_0 J(\aleph)} - \mathbbm{1}}{J(\aleph)}$ is invertible, and we denote the inverse by $\frac{J(\aleph)}{e^{t_0 J(\aleph)} - \mathbbm{1}}$. Hence, $\gamma=\frac{J(\aleph)}{e^{t_0 J(\aleph)} - \mathbbm{1}}v_0\in\mathbb{C}^d$ is well-defined.

As shown in the proof of Prop. 5., we can select $\Phi\in\Aut{(G)}$ given by
\[\Phi([v_0,t_0])=\left[v_0 - \frac{e^{t_oJ(\aleph)} - \mathbbm{1}}{J(\aleph)} \gamma, t_0 \right] = [0, t_0].\]

By considering $\Phi(D)$ instead of $D$ we can assume without loss of generality that $[0, t_0] \in D$. Using the formula for matrix multiplication like that in Prop. \ref{CenterSimpConn}, we get:
\begin{equation*}
    [v,t][u,s][v,t]\inv [u,s]\inv = \left[ \left(e^{tJ(\aleph)} - \mathbbm{1} \right)u - \left(e^{sJ(\aleph)} - \mathbbm{1} \right)v, 0 \right],
\end{equation*}
for all $[v,t],[u,s] \in G$. Denote the commutator of $G$ by $[\cdot,\cdot]_{\widetilde{G}}$. It follows in particular that $[G,G] \subseteq \C^d \cap D$. 

Consider the map $\varphi_{[0,t_0]}: D \to \C^d \cap D$ given by:
\begin{equation} \label{commutatorByt_0Map}
    \varphi_{[0,t_0]}([u,s]) = [0,t_0][u,s][0,t_0]\inv [u,s]\inv = \left[ \left(e^{t_0} J(\aleph) - \mathbbm{1} \right)u, 0 \right],
\end{equation}
for all $[u,s] \in D$. Since $P(D) \cong D / \C^d$ is assumed to be non-discrete, by Lemma \ref{NcapDInZ} and the earlier realization that $[G,G]_{\widetilde{G}} \subseteq \C^d \cap G$, we have that $[D,D]_{D} \subseteq \C^d \cap D \subseteq Z(G)_0$. So in particular, $\varphi_{[0,t_0]}([u,s])$ commutes with all elements of $G$. Hence we may calculate:
\begin{align*}
    \varphi_{[0,t_0]}([v,t][u,s]) &= [0,t_0] [v,t] [u,s] [0,t_0] [0,t_0]\inv ([v,t][u,s])\inv \\
    &= [0,t_0] [v,t] [u,s] [0,t_0]\inv [u,s]\inv [v,t]\inv \\
    &= [0,t_0][v,t][0,t_0]\inv \left( [0,t_0] [u,s] [0,t_0]\inv [u,s]\inv \right) [v,t]\inv \\
    &= [0,t_0][v,t][0,t_0]\inv \varphi_{[0,t_0]}([u,s]) [v,t]\inv \\
    &= \varphi_{[0,t_0]}([v,t]) \varphi_{[0,t_0]}([u,s]),
\end{align*}
and thus $\varphi_{[0,t_0]}$ is a homomorphism. From \eqref{kerEquality} we have that 
\begin{equation*}
    \ker(\varphi) = \{[u,s] \in D \divides u \in \ker(J(\aleph))\} = (Z(G)_0 \times \C) \cap D.
\end{equation*}
From Prop. \ref{CenterSimpConn}, we know that $Z(G)_0 \times \C \cong \C^{1 + \dim(Z(G))}$, and thus $(Z(G)_0 \times \C) \cap D$ is finitely generated. Since both $\ker(\varphi_{[0,t_0]})$ and $\varphi_{[0,t_0]}(D)$ are finitely generated, the result follows from the same logic as in the proof of Prop. \ref{FinGenCond}.
\end{proof}
\end{theorem}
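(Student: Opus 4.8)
The plan is to invoke Proposition \ref{FinGenCond} and thereby reduce the theorem to showing that the projection $P(D)\subseteq(\C,+)$ is finitely generated. When $P(D)$ is discrete this is immediate: a discrete subgroup of $\C\cong\R^{2}$ is free of rank at most $2$ by Lemma \ref{DiscGrpVS}. So the whole content lies in the case where $P(D)$ is \emph{not} discrete, and there I would split according to the set $\mathfrak{k}$ of Remark \ref{discreteKerSet}, treating $P(D)\subseteq\mathfrak{k}$ and $P(D)\not\subseteq\mathfrak{k}$ separately.

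Suppose first $P(D)\subseteq\mathfrak{k}$. Since $\aleph$ is finitely supported, the explicit description of $\mathfrak{k}$ shows it lies inside the subgroup of $(\C,+)$ generated by the finitely many numbers $2\pi i/x_p$ with $p\in\supp{\aleph}\cap(\C\setminus\{0\})$. That ambient subgroup is finitely generated and abelian, hence so is its subgroup $P(D)$, and Proposition \ref{FinGenCond} finishes this case.

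The real work is the case $P(D)\not\subseteq\mathfrak{k}$. Choose $t_0\in P(D)\setminus\mathfrak{k}$; then $\frac{e^{t_0J(\aleph)}-\mathbbm{1}}{t_0J(\aleph)}$ is invertible, and since it commutes with $J(\aleph)$ and $e^{t_0J(\aleph)}-\mathbbm{1}$ is its product with $t_0J(\aleph)$, one deduces $\ker(e^{t_0J(\aleph)}-\mathbbm{1})=\ker J(\aleph)$. Using an automorphism of $G$ of the conjugation type appearing in the proof of Proposition \ref{ConnSubGrpForms} (well-defined precisely because of the invertibility just noted), I would carry a chosen lift $[v_0,t_0]\in D$ to $[0,t_0]$, so that without loss of generality $[0,t_0]\in D$. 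The key device is then the commutator map $\varphi_{[0,t_0]}\colon D\to\C^{d}\cap D$, $[u,s]\mapsto[0,t_0][u,s][0,t_0]^{-1}[u,s]^{-1}=[(e^{t_0J(\aleph)}-\mathbbm{1})u,\,0]$. Because $P(D)$ is non-discrete, Lemma \ref{NcapDInZ} gives $N\cap D\subseteq Z(G)_0$; hence the image of $\varphi_{[0,t_0]}$ is central, and the standard cocycle manipulation (insert $[0,t_0]^{-1}[0,t_0]$, then commute the central value past $[v,t]^{-1}$) shows $\varphi_{[0,t_0]}$ is a group homomorphism. Its image is a discrete subgroup of $\C^{d}\cong\R^{2d}$, hence finitely generated, and its kernel is $\{[u,s]\in D : u\in\ker J(\aleph)\}=(Z(G)_0\times\C)\cap D$, a discrete subgroup of the complex vector space $Z(G)_0\times\C\cong\C^{1+\dim_{\C}\ker J(\aleph)}$, hence also finitely generated. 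Lifting generators of the image and adjoining generators of the kernel --- exactly the bookkeeping from Proposition \ref{FinGenCond} --- produces a finite generating set for $D$.

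I expect the main obstacle to be verifying that $\varphi_{[0,t_0]}$ is a homomorphism: this is the one point where the non-discreteness of $P(D)$ is genuinely used, via the centrality conclusion of Lemma \ref{NcapDInZ}, without which the value $\varphi_{[0,t_0]}([u,s])$ would not obviously commute past $[v,t]^{-1}$ in the cocycle computation. A secondary technical hurdle is the reduction to $[0,t_0]\in D$, which is available only because $t_0\notin\mathfrak{k}$ makes $\frac{e^{t_0J(\aleph)}-\mathbbm{1}}{J(\aleph)}$ invertible on all of $\C^{d}$; one should also record at the outset that "$P(D)$ discrete", "$P(D)$ non-discrete with $P(D)\subseteq\mathfrak{k}$", and "$P(D)$ non-discrete with $P(D)\not\subseteq\mathfrak{k}$" exhaust all possibilities.
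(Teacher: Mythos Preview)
Your proposal is correct and follows the paper's proof essentially step for step: the same trichotomy on $P(D)$, the same containment of $\mathfrak{k}$ in a finitely generated subgroup of $\C$, the same reduction to $[0,t_0]\in D$ via an automorphism enabled by $t_0\notin\mathfrak{k}$, the same commutator map $\varphi_{[0,t_0]}$ shown to be a homomorphism using the centrality from Lemma \ref{NcapDInZ}, and the same identification of its kernel with $(Z(G)_0\times\C)\cap D$. You have also correctly isolated the two genuine pressure points of the argument.
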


\section{Homogeneous Spaces} \label{SectionHomogeneousSpaces}
In this section, we describe a characterization of the maximal compact subgroup of a connected almost Abelian group. Such a characterization is of interest, since the homotopy type of a Lie group is given by that of its maximal compact subgroup. We begin with a few technical lemmas.

\begin{lemma}[Covering Space of a Homogeneous Space]
Let $G$ be a simply connected almost Abelian Lie group, $H \subseteq G$ be a closed subgroup with $H_0$ as its identity component. Then $G / H_0$ is the universal cover of $G / H$.
\begin{proof}
By Prop. 1.94(b) in \cite{knapp2013lie}, the natural map of $G/H_0$ onto $G/H$ is a covering map. By Prop. 1.94(e) in \cite{knapp2013lie}, $G/H$ is simply connected.
\end{proof}
\end{lemma}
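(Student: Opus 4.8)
The plan is to supply the two ingredients underlying the cited facts from \cite{knapp2013lie}: that the natural map $q\colon G/H_0\to G/H$ is a covering map, and that $G/H_0$ is simply connected. The lemma then follows because a simply connected covering space of a connected, locally nice space is its universal cover, unique up to isomorphism of covering spaces.

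First I would record the relevant structure of $H_0$. Since $H$ is a closed subgroup of the Lie group $G$, it is itself a Lie group, and its identity component $H_0$ is an \emph{open} normal subgroup of $H$, so $H/H_0$ is a discrete group. The inclusion $H_0\subseteq H$ induces a smooth $G$-equivariant surjection $q\colon G/H_0\to G/H$, $gH_0\mapsto gH$, whose fibers are the cosets $gH/H_0\cong H/H_0$. Since $G\to G/H$ is a principal $H$-bundle and $G/H_0$ is the associated bundle $G\times_H(H/H_0)$, the map $q$ is a locally trivial fiber bundle with discrete fiber, i.e.\ a covering map; this is Prop.\ 1.94(b) of \cite{knapp2013lie}. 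Note also that $G/H$ is connected because $G$ is.

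Second, for the simple connectivity of $G/H_0$ I would use the long exact homotopy sequence of the fibration $H_0\hookrightarrow G\twoheadrightarrow G/H_0$:
\[
\pi_1(G)\longrightarrow \pi_1(G/H_0)\longrightarrow \pi_0(H_0).
\]
By hypothesis $\pi_1(G)=0$, and $\pi_0(H_0)$ is a single point since $H_0$ is connected; exactness then forces $\pi_1(G/H_0)=0$. (Equivalently, one may simply quote Prop.\ 1.94(e) of \cite{knapp2013lie}.)

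Finally I would assemble the pieces: $G/H$ is a smooth manifold, hence connected, locally path-connected, and semilocally simply connected, so it admits a universal cover that is unique up to isomorphism of covering spaces; since $q\colon G/H_0\to G/H$ is a covering with $G/H_0$ simply connected, $G/H_0$ \emph{is} that universal cover. I do not anticipate a genuine obstacle here — everything is standard covering-space theory once the associated-bundle description of $q$ is in hand. The only points deserving care are confirming that $q$ is truly a covering (openness and normality of $H_0$ in $H$, together with local triviality) and observing that the almost-abelian hypothesis enters only through ``$G$ is simply connected.''
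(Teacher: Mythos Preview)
Your proposal is correct and follows the same two-step structure as the paper's proof: first, $G/H_0\to G/H$ is a covering map; second, $G/H_0$ is simply connected. The paper simply cites Prop.~1.94(b) and (e) of \cite{knapp2013lie} for these facts (with an evident typo in the second sentence, where ``$G/H$'' should read ``$G/H_0$''), whereas you supply the underlying arguments---the associated-bundle description of $q$ and the long exact sequence of the fibration $H_0\hookrightarrow G\to G/H_0$. Your observation that the almost Abelian hypothesis is irrelevant and only simple connectivity of $G$ is used is also correct.
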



\begin{lemma} \label{intersectComplexSubgrps}
The intersection of complex connected Lie subgroups of a simply connected complex almost Abelian group is again a complex connected Lie subgroup.
\begin{proof}

Let $H$ and $H'$ be connected subgroups of a complex almost Abelian group $G$. By Prop. \ref{ConnSubGrpForms}, we have that $H$ and $H'$ are one of the two forms:
\begin{enumerate}
    \item[(i)] $$H = \left\{ [w, 0] \in \C^d \rtimes \C \;|\; w \in \mathbf{W} \right\},$$
    \item[(ii)] $$H = \left\{ \left[w + \frac{e^{tJ(\aleph)} - \mathbbm{1}}{J(\aleph)}v_0, t\right] \in \C^d \rtimes \C \;|\; w \in \mathbf{W},\;\; t \in \C \right\},$$
\end{enumerate}
where $\mathbf{W}$ is an $\ad_{e_0}$-invariant subspace, and $v_0 \in \C^d$ is an arbitrary fixed element. Thus the intersection $H \cap H'$ can be broken into three cases.
\newline

\noindent
\textbf{Case 1:} Assume $H$ and $H'$ are of type (i). Then their intersection is the intersection of subspaces of $\C^d$, and thus is a subspace, which is of course connected.
\newline

\noindent
\textbf{Case 2:} Assume $H$ is of type (i) and $H'$ is of type (ii). Thus we may say
\begin{align*}
    H &\coloneqq \left\{ [w,0] \; \middle| \; w \in \mathbf{W} \right\}, \\
    H' & \coloneqq \left\{ \left[w' + \frac{e^{tJ(\aleph)} - \mathbbm{1}}{J(\aleph)}v_0', t \right] \in \C^d \rtimes \C \; \middle| \; w' \in \mathbf{W'}, t \in \C \right\}.
\end{align*}
But then any element $[w,t]$ of $H \cap H'$ is in particular an element of $H$, and so $t = 0$. So the elements of $H'$ with $t = 0$ are of the form
\begin{equation*}
    \left[ w' + \frac{e^{0 \cdot J(\aleph)} - \mathbbm{1}}{J(\aleph)}v_0', 0 \right] = [w', 0],
\end{equation*}
and thus $H \cap H'$ is another subspace, and so again is connected.
\newline

\noindent
\textbf{Case 3:} Assume $H$ and $H'$ are both of form (ii). So we define them as:
\begin{align*}
    H & \coloneqq \left\{ \left[w + \frac{e^{tJ(\aleph)} - \mathbbm{1}}{J(\aleph)}v_0, t \right] \in \C^d \rtimes \C \; \middle| \; w \in \mathbf{W}, t \in \C \right\}, \\
    H' & \coloneqq \left\{ \left[w' + \frac{e^{tJ(\aleph)} - \mathbbm{1}}{J(\aleph)}v_0', t \right] \in \C^d \rtimes \C \; \middle| \; w' \in \mathbf{W'}, t \in \C \right\},
\end{align*}
where $v_0,v_0' \in \C^d$ are arbitrary fixed elements, and $\mathbf{W}, \mathbf{W'}$ are $\ad_{e_0}$-invariant subspaces. Note that if the intersection $H \cap H'$ is empty then we are done, so we may assume the intersection is nonempty.

Observe that for $w\in\mathbf{W}$, we have that $\left[ w + \frac{e^{tJ(\aleph)} - \mathbbm{1}}{J(\aleph)}v_0, t \right] \in H \cap H'$ if and only if there exists $w'\in\mathbf{W}'$ such that $w + \frac{e^{tJ(\aleph)} - \mathbbm{1}}{J(\aleph)}v_0 = w' + \frac{e^{tJ(\aleph)} - \mathbbm{1}}{J(\aleph)}v_0'$ or $w = w' +  \frac{e^{tJ(\aleph)} - \mathbbm{1}}{J(\aleph)}(v_0' - v_0)$. Thus the first component of $H \cap H'$ consists of elements $w +  \frac{e^{tJ(\aleph)} - \mathbbm{1}}{J(\aleph)}v_0$ such that $w \in \mathbf{W} \cap \left(\mathbf{W'} +  \frac{e^{tJ(\aleph)} - \mathbbm{1}}{J(\aleph)}(v_0' - v_0) \right)$.
We have two subcases: either $v_0 = v_0'$, or $v_0 \neq v_0'$. 
\newline

\noindent
\textbf{Subcase 1:} Suppose $v_0 = v_0'$. Then, $[w,t]\in H\cap H'$ implies, as above,
\[w \in \mathbf{W} \cap \left(\mathbf{W'} +  \frac{e^{tJ(\aleph)} - \mathbbm{1}}{J(\aleph)}(v_0' - v_0) \right)=w \in \mathbf{W} \cap \left(\mathbf{W'} +  \frac{e^{tJ(\aleph)} - \mathbbm{1}}{J(\aleph)}(\vec{0}) \right)=W\cap W'.\]
As in Case 1, we have an intersection of subspaces which we know to be connected.
\newline

\noindent
\textbf{Subcase 2:} Suppose $v_0 \neq v_0'$. We prove that $H \cap H'$ is path connected, and so in particular connected. 

In order to prove path-connectedness, it is sufficient to prove that there exists a path between any two of the affine spaces
\begin{equation*}
    \mathbb{A}_t \coloneqq \left\{ \left[ w + \frac{e^{tJ(\aleph)} - \mathbbm{1}}{J(\aleph)}v_0, t \right] \; \middle| \; w \in \mathbf{W} \st \exists w' \in \mathbf{W'} \st w' + \frac{e^{tJ(\aleph)} - \mathbbm{1}}{J(\aleph)}v_0' = w + \frac{e^{tJ(\aleph)} - \mathbbm{1}}{J(\aleph)}v_0 \right\}
\end{equation*}

Now consider $(H \cap H')_0$. Being a connected subgroup of $G$, we have that it is once again of the form (i) or (ii). If it is in the form (ii), we have that there are elements $[*,t] \in (H \cap H')_0$ for $\abs{\R}$ distinct nonzero $t$. 

If it is instead of the form $(i)$ there are two more options: either there are no $t \neq 0$ coordinates, or there exists $g \coloneqq \left[ w + \frac{e^{rJ(\aleph)} - \mathbbm{1}}{J(\aleph)}v_0, r \right] \in H \cap H'$ such that $g \notin (H \cap H')_0$. However, since in particular $g \in H$, there exists a neighborhood $B_{\delta_1}(g) \subseteq H$ in the subspace topology on $H$. Similarly, there exists a neighborhood $B_{\delta_2}(g) \subseteq H'$. Now by the assumptions on $H$, we have that $\left[ w + \frac{e^{sJ(\aleph)} - \mathbbm{1}}{J(\aleph)}v_0, s \right] \in H$ for all $s \in \C$. Since the function $\frac{e^{sJ(\aleph)} - \mathbbm{1}}{J(\aleph)}$ is continuous in $s$, and the last component function is obviously continuous, we have that there is a neighborhood $B_{\epsilon_1}(r) \subseteq \C$ such that $\left[ w + \frac{e^{sJ(\aleph)} - \mathbbm{1}}{J(\aleph)}v_0, s \right] \in B_{\delta_1}(g)$ for all $s \in B_{\epsilon_1}(r)$. It can be easily seen that an analogous statement holds for $H'$, with corresponding neighborhood $B_{\epsilon_2}(r)$. Let $\delta \coloneqq \min(\delta_1, \delta_2)$, and let $\epsilon = \min(\epsilon_1, \epsilon_2)$. Then it is apparent that $B_\delta(g) \subseteq H \cap H'$ contains elements $[*, s]$ for all $s \in B_{\epsilon}(r)$. Thus in this case as well there are $\abs{\R}$ elements with distinct $t$-coordinates contained in $H \cap H'$. 

Suppose that $[w,t]\in H\cap H'$ implies that $t=0$. Then, we have
\[w\in\mathbf{W}\cap\left(\mathbf{W}'+\frac{e^{tJ(\aleph)} - \mathbbm{1}}{J(\aleph)}(v_0' - v_0) \right)=\mathbf{W}\cap\left(\mathbf{W}'+[0](v_0' - v_0) \right)=\mathbf{W}\cap\mathbf{W}',\]
so this subcase reduces to Case 1 where we had intersecting subspaces of $\mathbb{C}^d$, which is clearly connected.

Assume $\mathbb{A}_t$ has $\abs{\R}$ elements with distinct $t$-values, which we showed must be the case if $t$ is not always zero for all elements of $H \cap H'$. Then there are uncountably many points $[*,t] \in H \cap H'$, while by Remark \ref{discreteKerSet} there can only be a countable number of points $[*,t]$ such that $\frac{e^{tJ(\aleph)} - \mathbbm{1}}{J(\aleph)}$ is not invertible. So choose some $h \coloneqq \left[ w_0 + \frac{e^{t_0 J(\aleph)} - \mathbbm{1}}{J(\aleph)}v_0,\ t_0 \right] \in H \cap H'$ such that $\frac{e^{t_0 J(\aleph)} - \mathbbm{1}}{J(\aleph)}$ is invertible. Note that since $h \in H \cap H'$, we know that there exists
$w_0' \in \mathbf{W'}$ such that $w_0 = w_0' + \frac{e^{t_0 J(\aleph)} - \mathbbm{1}}{J(\aleph)}(v_0' - v_0)$. Then consider the path
\begin{equation*}
    \gamma(s) \coloneqq \left[ \left( \frac{e^{sJ(\aleph)} - \mathbbm{1}}{J(\aleph)} \right) \left( \frac{e^{t_0 J(\aleph)} - \mathbbm{1}}{J(\aleph)} \right)^{-1} \left(w_0' + \frac{e^{t_0 J(\aleph)} - \mathbbm{1}}{J(\aleph)} (v_0' - v_0) \right)  + \frac{e^{s J(\aleph)} - \mathbbm{1}}{J(\aleph)}v_0,\ s \right].
\end{equation*}
Clearly, $\gamma$ is a continuous function. Now we note,
\begin{multline*}
    \left( \frac{e^{sJ(\aleph)} - \mathbbm{1}}{J(\aleph)} \right) \left( \frac{e^{t_0 J(\aleph)} - \mathbbm{1}}{J(\aleph)} \right)^{-1} \left(w_0' + \frac{e^{t_0 J(\aleph)} - \mathbbm{1}}{J(\aleph)} (v_0' - v_0) \right) = \\ \underbrace{\left( \frac{e^{sJ(\aleph)} - \mathbbm{1}}{J(\aleph)} \right) \left( \frac{e^{t_0 J(\aleph)} - \mathbbm{1}}{J(\aleph)} \right)^{-1} w_0'}_{\in \mathbf{W'} \text{ by $\ad_{e_0}$-invariance of } \mathbf{W'}} + \underbrace{\left( \frac{e^{sJ(\aleph)} - \mathbbm{1}}{J(\aleph)} \right) (v_0' - v_0)}_{\in \frac{e^{sJ(\aleph)} - \mathbbm{1}}{J(\aleph)}(v_0' - v_0) }.
\end{multline*}
So
\begin{equation*}
    \left( \frac{e^{sJ(\aleph)} - \mathbbm{1}}{J(\aleph)} \right) \left( \frac{e^{t_0 J(\aleph)} - \mathbbm{1}}{J(\aleph)} \right)^{-1} \left(w_0' + \frac{e^{t_0 J(\aleph)} - \mathbbm{1}}{J(\aleph)} (v_0' - v_0) \right) \in \mathbf{W'} +  \frac{e^{sJ(\aleph)} - \mathbbm{1}}{J(\aleph)}(v_0' - v_0).
\end{equation*}
Simultaneously, we have that 
\begin{equation*}
    \left( \frac{e^{sJ(\aleph)} - \mathbbm{1}}{J(\aleph)} \right) \left( \frac{e^{t_0 J(\aleph)} - \mathbbm{1}}{J(\aleph)} \right)^{-1} w \in \mathbf{W},
\end{equation*}
by the $\ad_{e_0}$-invariance of $\mathbf{W}$. Thus the image of $\gamma$ is in $H\cap H'$.
\end{proof}
\end{lemma}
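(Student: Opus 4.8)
The plan is to reduce to a single pair of subgroups and then split into cases according to the classification in Proposition \ref{ConnSubGrpForms}. The statement for an arbitrary family then follows: a finite intersection of connected complex Lie subgroups, once it is known to be connected, is again of one of the two forms of Proposition \ref{ConnSubGrpForms} and we may induct; and an arbitrary intersection of complex Lie subalgebras of the finite-dimensional $\aA(\aleph)$ coincides with the intersection of some finite subfamily, which passes to the group level because each $H_i$ contains the connected subgroup attached to the total intersection of the Lie algebras. At the outset I would note that $H \cap H'$ is a closed subgroup of $G$, hence an embedded Lie subgroup whose Lie algebra is the complex Lie subalgebra $\mathfrak{h} \cap \mathfrak{h}'$; its identity component is then the (complex, by Proposition \ref{ConnSubGrpForms}) connected Lie subgroup attached to $\mathfrak{h} \cap \mathfrak{h}'$, so the entire content of the lemma is that $H \cap H'$ is \emph{connected}.

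The cases in which at least one of $H$, $H'$ is of type (i) — i.e.\ contained in $\C^d = \{t = 0\}$ — are immediate: if both are of type (i) the intersection is an intersection of linear subspaces of $\C^d$; if $H$ is of type (i) and $H'$ of type (ii), then membership in $H$ forces $t = 0$, and the $t = 0$ slice of a type-(ii) subgroup is again a linear subspace of $\C^d$. In each case $H \cap H'$ is a $\C$-subspace, hence a connected complex Lie subgroup.

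The substantive case is $H, H'$ both of type (ii), with data $(\mathbf{W}, v_0)$ and $(\mathbf{W}', v_0')$. Here I would stratify $H \cap H'$ by the projection $P\colon [x,t]\mapsto t$: the fiber over $t$ is the affine set $\mathbb{A}_t = \big(\mathbf{W} + \tfrac{e^{tJ(\aleph)} - \mathbbm{1}}{J(\aleph)}v_0\big)\cap\big(\mathbf{W}' + \tfrac{e^{tJ(\aleph)} - \mathbbm{1}}{J(\aleph)}v_0'\big)$, nonempty exactly when $\tfrac{e^{tJ(\aleph)} - \mathbbm{1}}{J(\aleph)}(v_0'-v_0) \in \mathbf{W}+\mathbf{W}'$; each nonempty fiber is an affine subspace, so connected. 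Passing to $\C^d/(\mathbf{W}+\mathbf{W}')$ with the induced action $\bar J$ of $J(\aleph)$, the image $T = P(H \cap H')$ is the zero set of the holomorphic map $t \mapsto \tfrac{e^{t\bar J} - \mathbbm{1}}{\bar J}\,\overline{(v_0'-v_0)}$. If $v_0 \equiv v_0' \pmod{\mathbf{W}+\mathbf{W}'}$ (in particular if $v_0 = v_0'$) this vanishes identically, so $T = \C$ and $H \cap H'$ is visibly again of type (ii) with $\ad_{e_0}$-invariant subspace $\mathbf{W}\cap\mathbf{W}'$. Once $T = \C$, I would connect all fibers by an explicit holomorphic path: pick $t_0 \in T$ with $\tfrac{e^{t_0 J(\aleph)} - \mathbbm{1}}{J(\aleph)}$ invertible, possible off the countable exceptional set of Remark \ref{discreteKerSet}; fix $h = [w_0 + \tfrac{e^{t_0 J(\aleph)} - \mathbbm{1}}{J(\aleph)}v_0, t_0] \in \mathbb{A}_{t_0}$, writing $w_0 = w_0' + \tfrac{e^{t_0 J(\aleph)} - \mathbbm{1}}{J(\aleph)}(v_0'-v_0)$ with $w_0' \in \mathbf{W}'$; and set
\[
\gamma(s) = \left[\left(\tfrac{e^{sJ(\aleph)} - \mathbbm{1}}{J(\aleph)}\right)\left(\tfrac{e^{t_0 J(\aleph)} - \mathbbm{1}}{J(\aleph)}\right)^{-1}\!\left(w_0' + \tfrac{e^{t_0 J(\aleph)} - \mathbbm{1}}{J(\aleph)}(v_0'-v_0)\right) + \tfrac{e^{sJ(\aleph)} - \mathbbm{1}}{J(\aleph)}v_0,\ s\right].
\]
Since $\tfrac{e^{sJ(\aleph)} - \mathbbm{1}}{J(\aleph)}$ and the inverse of $\tfrac{e^{t_0 J(\aleph)} - \mathbbm{1}}{J(\aleph)}$ are power series in $J(\aleph)$, they preserve $\ad_{e_0}$-invariant subspaces, so one checks $\gamma(s) \in H \cap H'$ for all $s$, with $\gamma(t_0) = h$; thus $\mathrm{Im}(\gamma)$ is connected and meets every fiber $\mathbb{A}_t$, whence $H \cap H' = \bigcup_t \mathbb{A}_t$ is connected as a union of connected sets all meeting $\mathrm{Im}(\gamma)$.

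The step I expect to be the main obstacle is establishing the right dichotomy for $T = P(H \cap H')$ — namely that it cannot be a nontrivial, disconnected proper subset of $\C$ (e.g.\ a nontrivial discrete set), for then the path $\gamma$ would have nothing to connect and $H \cap H'$ could be genuinely disconnected. Concretely, this requires analyzing the zero set of the exponential-polynomial curve $t \mapsto \tfrac{e^{t\bar J} - \mathbbm{1}}{\bar J}\,\overline{(v_0'-v_0)}$ and arguing, using that $H \cap H'$ is a subgroup, that its points either realize every $t$-level or collapse to the $t = 0$ slice, which is again a subspace. Pinning down this dichotomy — equivalently, controlling the topology of $H \cap H'$ near a point with nonzero $t$-coordinate — is the delicate heart of the argument; everything else is the classification of Proposition \ref{ConnSubGrpForms} together with routine bookkeeping with $\ad_{e_0}$-invariance.
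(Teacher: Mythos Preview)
Your overall strategy matches the paper's: the same case split via Proposition~\ref{ConnSubGrpForms}, the same trivial handling of Cases~1 and~2, and in Case~3 the same explicit path $\gamma(s)$ built from a base point $t_0$ at which $\tfrac{e^{t_0 J(\aleph)}-\mathbbm{1}}{J(\aleph)}$ is invertible, with $\ad_{e_0}$-invariance of $\mathbf{W},\mathbf{W}'$ keeping $\gamma$ inside both subgroups. Your reformulation via the quotient $\C^d/(\mathbf{W}+\mathbf{W}')$ and the holomorphic map $t\mapsto\tfrac{e^{t\bar J}-\mathbbm{1}}{\bar J}\,\overline{(v_0'-v_0)}$ is in fact cleaner than what the paper writes. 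One minor logical point: you do not need $T=\C$ \emph{before} building $\gamma$; a single good $t_0\in T$ suffices, and then the fact that $\gamma(s)\in H\cap H'$ for every $s\in\C$ \emph{forces} $T=\C$ a posteriori.

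You are also right that the dichotomy for $T=P(H\cap H')$ is the crux, and right to leave it flagged. The paper does attempt to close it: it argues that if some $g\in H\cap H'$ has $t$-coordinate $r\neq 0$, then by looking at small balls around $g$ in $H$ and in $H'$ separately one finds elements with every $t$-coordinate $s$ near $r$, so $T$ is uncountable and Remark~\ref{discreteKerSet} supplies a good $t_0$. But that step is not sound as written: the curve $s\mapsto[w+\tfrac{e^{sJ(\aleph)}-\mathbbm{1}}{J(\aleph)}v_0,s]$ lies in $H$, and the analogous curve through $g$ lies in $H'$, yet for $s\neq r$ these are \emph{different} points, so neither is forced to lie in $H\cap H'$. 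Indeed the dichotomy can fail outright: with $d=1$, $J(\aleph)=\lambda\neq 0$, $\mathbf{W}=\mathbf{W}'=\{0\}$, and $v_0\neq v_0'$ in $\C$, one has $H\cap H'=\{[0,\tfrac{2\pi i n}{\lambda}]:n\in\Z\}$, an infinite discrete (hence disconnected) subgroup. So the obstacle you isolate is genuine; neither your sketch nor the paper's neighborhood argument resolves it, and the lemma as stated needs an extra hypothesis (for instance $v_0'-v_0\in\mathbf{W}+\mathbf{W}'$, which is exactly your ``$T=\C$'' situation).
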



\begin{definition}
Let $X$ be a subset of a complex almost Abelian Lie group $G$. We define $\mathcal{C}(X)$ to be the minimal connected complex Lie subgroup containing $X$ (defined as the intersection of all such connected complex Lie groups). Note that by Lemma \ref{intersectComplexSubgrps}, this is well-defined.
\end{definition}

The main result is that the maximal compact subgroup of an almost Abelian Lie group $G=\widetilde{G}/\Gamma$ is intimately related to $\mathcal{C}(\Gamma)$.

\begin{proposition} \label{MaxCompSubgrp}
Let $G = \widetilde{G}/\Gamma$ be a connected almost Abelian Lie group. The maximal compact subgroup $K\subseteq G$ is given by $K = \mathcal{C}(\Gamma) / \Gamma$.
\end{proposition}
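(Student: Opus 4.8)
The plan is to pass to the universal cover $\widetilde G$ and use the correspondence, furnished by Lemma \ref{ProjSubgrpCorr} and Lemma \ref{OGLem10}, between connected subgroups of $G$ and connected Lie subgroups of $\widetilde G$, together with the compactness criterion of Proposition \ref{CompCondition}. First note that, since every discrete normal subgroup of a connected Lie group is central, $\Gamma \subseteq Z(\widetilde G)$; as $\Gamma \subseteq \mathcal C(\Gamma)$ this makes $\Gamma$ a normal subgroup of $\mathcal C(\Gamma)$, so $\mathcal C(\Gamma)/\Gamma$ is an honest connected subgroup of $G$. It then suffices to establish (I) that $\mathcal C(\Gamma)/\Gamma$ is compact, and (II) that every compact subgroup of $G$ lies inside $\mathcal C(\Gamma)/\Gamma$. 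Because $G$ is a connected solvable (indeed metabelian) Lie group, its maximal compact subgroup is a torus and in particular connected, so in (II) it is enough to consider connected compact subgroups.

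I would carry out (II) first, as it is the more routine half. Let $K \subseteq G$ be connected and compact. By Lemma \ref{ProjSubgrpCorr} and Lemma \ref{OGLem10}, $K = \widetilde K/(\widetilde K \cap \Gamma)$ for a unique connected Lie subgroup $\widetilde K \subseteq \widetilde G$, and compactness forces, via Proposition \ref{CompCondition}, the equality $\mathrm{rank}(\widetilde K \cap \Gamma) = \dim_{\R}(\widetilde K)$. By Proposition \ref{ConnSubGrpForms}, $\widetilde K$ is either of the form $\exp(\mathbf W_K)$ for a complex subspace $\mathbf W_K \subseteq \C^d$, or is isomorphic to $\exp(\mathbf W_K) \rtimes \C$; but in the second case $\widetilde K$ contains a full copy of $\C$, two real dimensions that the lattice $\widetilde K \cap \Gamma$ cannot fill, since its $\C$-coordinates lie in $T_\aleph$ and $\mathrm{rank}(T_\aleph) \le 1$ by Lemma \ref{TalephTrivCond} — contradicting $\mathrm{rank}(\widetilde K \cap \Gamma) = \dim_{\R}(\widetilde K)$. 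Hence $\widetilde K = \exp(\mathbf W_K)$ with $\mathbf W_K \subseteq \ker J(\aleph)$, on which $\exp$ is the identity (Remark \ref{ExpIsId}), and $\widetilde K \cap \Gamma$ is a cocompact lattice in $\mathbf W_K$; therefore $\mathbf W_K = \R\{\widetilde K \cap \Gamma\} = \C\{\widetilde K \cap \Gamma\}$ is the smallest complex subspace containing $\widetilde K \cap \Gamma$. By the definition of $\mathcal C$ and its monotonicity in the argument, $\widetilde K = \mathcal C(\widetilde K \cap \Gamma) \subseteq \mathcal C(\Gamma)$, and so $K \subseteq \mathcal C(\Gamma)/\Gamma$.

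For (I), the same tools point to the approach: use the minimality in the definition of $\mathcal C$ to describe $\mathcal C(\Gamma)$ explicitly — via Proposition \ref{ConnSubGrpForms}, after first ruling out its second form by the rank bound on $T_\aleph$ from Lemma \ref{TalephTrivCond}, it should be $\exp(\mathbf W)$ for $\mathbf W$ the smallest complex subspace of $\ker J(\aleph)$ containing $\Gamma$ (identified, via Remark \ref{ExpIsId}, with a subset of $\aA(\aleph)$) — and then verify the numerical criterion $\mathrm{rank}(\Gamma) = \dim_{\R}(\mathcal C(\Gamma))$ needed to invoke Proposition \ref{CompCondition}. I expect this verification to be the main obstacle: one must show that $\Gamma$ spans $\mathcal C(\Gamma)$ over $\R$, not merely over $\C$ — that is, that $\Gamma$ is a genuinely cocompact lattice in the minimal complex subgroup it generates — and this is exactly where the structural input about $\Gamma$ must be used, namely that it is free of finite rank and central (Proposition \ref{DiscSubgrpDim} and Theorem \ref{AllDiscSubgrpsOfSimpConnGrpAreFinGen}). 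With (I) in hand, (I) and (II) together exhibit $\mathcal C(\Gamma)/\Gamma$ as the maximal compact subgroup of $G$.
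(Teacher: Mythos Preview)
Your containment argument (II) is sound and is organized differently from the paper's. The paper does not invoke the classification of Proposition \ref{ConnSubGrpForms} or the bound $\rank T_\aleph\le 1$ to rule out the type-(ii) form of $\widetilde K$; instead it first notes that compact subgroups must be Abelian (by Proposition \ref{NonCompactConnAAGrp}) and then proves $\mathrm{Lie}(K)\subseteq\C\{\log\Gamma\}$ directly, by choosing $X\in\mathrm{Lie}(K)\setminus\C\{\log\Gamma\}$, observing that the one-parameter subgroup $H_X=\{e^{\tau X}:\tau\in\C\}$ then meets $\Gamma$ only at the identity, and showing that $q_\Gamma(H_X)\subseteq K$ is closed and noncompact. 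Both routes feed into the dimension inequality from Proposition \ref{CompCondition} and yield the same containment $\widetilde K\subseteq\mathcal C(\Gamma)$; yours is more structural, the paper's more direct.

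The obstacle you flag in (I) is genuine and, as stated, cannot be overcome: the equality $\rank(\Gamma)=\dim_\R\mathcal C(\Gamma)$ is simply false in general. For any simply connected almost Abelian $\widetilde G$ with $\ker J(\aleph)\neq 0$, choose $0\neq v_1\in\ker J(\aleph)$ and set $\Gamma=\Z\cdot[v_1,0]$. This is discrete, central, of rank $1$, yet the minimal connected \emph{complex} Lie subgroup containing it is $\mathcal C(\Gamma)=\{[zv_1,0]:z\in\C\}\cong\C$, of real dimension $2$; hence $\mathcal C(\Gamma)/\Gamma\cong\C/\Z\cong\R\times S^1$ is not compact. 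The paper's own proof glosses over exactly this point: it asserts that taking $\widetilde K=\mathcal C(\Gamma)$ makes $\dim K$ attain the bound $\rank\Gamma$, which is equivalent to the compactness being claimed and is never verified. Your instinct that this is the crux is correct; the statement needs an additional hypothesis on $\Gamma$ (for instance, that $\Gamma$ is already a full lattice in its complex linear span) to hold as written.
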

\begin{proof}
Recall that all subgroups of $G$ are either Abelian or almost Abelian. If a subgroup of $G$ is almost Abelian, by Prop. \ref{NonCompactConnAAGrp}, it cannot be compact. Hence, all compact subgroups of $G$ are Abelian and $\Lie{K}$ is an Abelian subalgebra of $\Lie{G}$.

We claim that $\Lie{K}\subseteq\mathbb{C}\{\log{\Gamma}\}$. Suppose to the contrary that this is not true. Then, there exists $X\in\Lie{K}-\mathbb{C}\{\log{\Gamma}\}$. Since $X\notin\mathbb{C}\{\log{\Gamma}\}$, there is no $\tau\in\mathbb{C}^*$ and $\gamma\in\Gamma-\{\mathbbm{1}\}$ such that $X=\frac{1}{\tau}\log{\gamma}$. That is, we cannot write $\gamma=e^{\tau X}$ for any choice of $\gamma\in\Gamma-\{\mathbbm{1}\}$ and $\tau\in\mathbb{C}^*$. It follows that $\left(H_X-\{\mathbbm{1}\}\right)\cap\left(\Gamma-\{\mathbbm{1}\}\right)=\emptyset$. Since subgroups intersect at least at the identity element, we must have $H_X\cap\Gamma=\{\mathbbm{1}\}$.

Now consider the quotient map $q_{\Gamma}\colon\widetilde{G}\to G$. Since this is a covering map, it is continuous and open. By the definition of quotient topology, $S\subseteq G$ is open if and only if $q_{\Gamma}^{-1}(S)\subseteq\widetilde{G}$ is open. Since $q_{\Gamma}^{-1}(S^C)=q_{\Gamma}^{-1}(S)^C$, we immediately have that $S\subseteq G$ is closed if and only if $q_{\Gamma}^{-1}(S)\subseteq\widetilde{G}$ is closed. Now observe that $q_{\Gamma}^{-1}(q_{\Gamma}(S))=S\cdot\Gamma$. If $S$ is closed, then since $\Gamma$ is also closed, we have that $S\cdot\Gamma$ is closed. But by our observation above, $S\cdot\Gamma=q_{\Gamma}^{-1}(q_{\Gamma}(S))$ is closed if and only if $q_{\Gamma}(S)$ is closed. Consequently, $q_{\Gamma}$ is also a closed map.

Since $H_X$ is closed, $q_{\Gamma}(H_X)\subseteq G$ is also closed. However, by construction, $q_{\Gamma}(H_X)\subseteq K$ is not compact since $q_{\Gamma}$ is continuous and $H_X$ is not compact. Since closed subsets of compact spaces are themselves compact, $K$ is not compact, a contradiction. Hence, $\Lie{K}\subseteq\mathbb{C}\{\log{\Gamma}\}$.

By Prop. \ref{CompCondition}, the compactness of $K$ implies that $\dim{K} = \dim{\widetilde{K}} = \rank{(\Gamma\cap\widetilde{K})} \leq \rank{\Gamma}$, where $K = \widetilde{K}/\Gamma$. Observe that if $\widetilde{K} = \mathcal{C}(\Gamma)$, we have by construction $\widetilde{K} \cap \Gamma = \Gamma$, thus $\dim{K}$ obtains the upper bound $\rank{\Gamma}$. Moreover, since the Lie algebra of $K$ is in the complex span of the logarithm of $\Gamma$, we conclude that we must have $\widetilde{K} = \mathcal{C}(\Gamma)$.
\end{proof}

Hence, we have a construction of the maximal compact subgroup of a connected complex almost Abelian Lie group. The relative simplicity of this construction suggests that it may be much easier to probe the homotopy type of such Lie groups by instead studying the homotopy type of their maximal compact subgroups.



\section*{Acknowledgments}  This work was done as part of the University of California, Santa Barbara Mathematics Summer Research Program for Undergraduates and was supported by NSF REU Grant DMS 1850663. We are very grateful to both UCSB and the NSF for making this opportunity possible, and for the enriching, challenging, and fun experiences we had in the course of the program. 

\printbibliography


\end{document}